\newcommand{\N}{\mathbb{N}}
\newcommand{\calP}{\mathcal P}
\newcommand{\calC}{\mathcal C}
\newcommand{\calI}{\mathcal I}
\newcommand{\calG}{\mathcal G}
\newcommand{\calA}{\mathcal A}
\newcommand{\calK}{\mathcal K}
\newcommand{\h}{\textbf{h}}
\newcommand{\R}{\mathbb R}
\newcommand{\Uad}{\mathcal U_{\text{ad}}}
\newcommand{\Lip}{\text{Lip}}
\title[Mean-field optimal control]{Mean-field optimal control and optimality conditions in the space of probability measures}
\author[M.~Burger]{Martin Burger}
\address[M.~Burger]{Department of Mathematics, Friedrich-Alexander Universit\"at  Erlangen-N\"urnberg, Cauerstra{\ss}e 11, 91058 Erlangen}
\email{martin.burger@fau.de}
\author[R.~Pinnau]{Ren\'e Pinnau}
\address[R.~Pinnau]{Department of Mathematics, TU Kaiserslautern \\ Erwin-Schr\"odinger-Str.\ 48, 67663 Kaiserslautern}
\email{pinnau@mathematik.uni-kl.de}
\author[C.~Totzeck]{Claudia Totzeck}
\address[C.~Totzeck]{Department of Mathematics, TU Kaiserslautern \\ Erwin-Schr\"odinger-Str.\ 48, 67663 Kaiserslautern}
\email{totzeck@mathematik.uni-kl.de}
\author[O.~Tse]{Oliver Tse}
\address[O.~Tse]{Department of Mathematics and Computer Science, Eindhoven University of Technology \\ P.O. Box 513, 5600MB Eindhoven, The Netherlands}
\email{o.t.c.tse@tue.nl}
\begin{document}

\begin{abstract}
We derive a framework to compute optimal controls for problems with states in the space of probability measures. Since many optimal control problems constrained by a system of ordinary differential equations (ODE) modelling interacting particles converge to optimal control problems constrained by a partial differential equation (PDE) in the mean-field limit, it is interesting to have a calculus directly on the mesoscopic level of probability measures which allows us to derive the corresponding first-order optimality system. 
In addition to this new calculus, we provide relations for the resulting system to the first-order optimality system derived on the particle level, and the first-order optimality system based on $L^2$-calculus under additional regularity assumptions. We further justify the use of the $L^2$-adjoint in numerical simulations by establishing a link between the adjoint in the space of probability measures and the adjoint corresponding to $L^2$-calculus.
Moreover, we prove a convergence rate for the convergence of the optimal controls corresponding to the particle formulation to the optimal controls of the mean-field problem as the number of particles tends to infinity.

\end{abstract}

\maketitle

{\footnotesize {\bf Keywords.} Optimal control with ODE/PDE constraints, interacting particle systems, mean-field limits.}

\medskip

{\footnotesize {\bf AMS subject classifications.} 49K15, 49K20}

\section{Introduction}

In the past few years, the growing interest in the (optimal) control of interacting particle systems and their corresponding mean-field limits led to many contributions on their numerical behavior (see, e.g.,~\cite{diss, sheep1}) as well as their analytical properties, e.g.,\cite{FornasierSolombrino,FornasierPP}. They can be found in various fields of applications, for example physical or biological models like crowd dynamics \cite{sheep1,Borzi,zuazua,bongini}, consensus formation  \cite{Dante}, or even global optimization \cite{CBO1,CBO2}. Meanwhile, there are also first approaches for stochastic particle systems available \cite{bonnet,pham}.

Since there are several points of view on this subject, the analytical techniques vary from standard ODE and PDE theory over optimal transport to measure-valued solutions. This induces also different variants for the derivation of first-order optimality conditions and/or  gradient information, which clearly also has some impact on the design of appropriate numerical algorithms for the solution of the optimal control problems at hand.

Before we discuss the novelty and advantages of our approach we recall some recent contributions to the topic. In \cite{FornasierSolombrino} the notion of mean-field optimal control problems was introduced. The authors combine well-known mean-field limit results with $\Gamma$-convergence to prove the convergence of optimal controls of the microscopic problem with $N$ interacting particles to a solution of the corresponding mean-field optimal controls. The article focuses on sparse controls and Caratheodory solutions, where the controls act linearly and additive on the dynamic of the interacting particles. In contrast to the present paper, there is no discussion of first order optimality conditions, no statement of adjoints, and no discussion of a convergence rate.

Based on these observations, the derivation of a mean-field Pontryagin maximum principle was shown in \cite{FornasierPP}. Starting from a Hamiltonian point of view, subdifferential calculus is employed to derive a gradient flow structure and a corresponding forward-backward system. Key ingredients of the proofs are semiconvexity of the functionals along geodesics and a rescaling of the adjoint variable. The article considers a dynamical system of interacting particles and additionally some policy makers. The controls enter the dynamics through the policy makers which remain finite as the number of interacting particles tends to infinity. To illustrate their methodology, explicit computations for the Cucker-Smale dynamics were presented. As in  \cite{FornasierSolombrino}, discussions on the optimality conditions, adjoints or convergence rate were absent.

Another Pontryagin Maximum Principle was derived via subdifferential calculus on the space of probability measures and needle-like variations in \cite{bonnet} for a non-local transport equation, where the control variable enters linearly in the velocity field of the transport equation. The resulting first-order optimality system consists of a forward-backward equation similar to the one in \cite{FornasierPP}, and the corresponding measure is identified by disintegration. Supplementary to the needle-like variations, we propose a different approach for the derivation of a corresponding linear system, and as a by-product, provide a direct link between the particle adjoint and its mean-field counterpart. Furthermore, the velocity fields considered in the present paper are more general. In addition, we provide a convergence rate as the number of interacting particles tends to infinity.

In contrast to these analytical results, \cite{herty18} approaches the problem formally with techniques from the field of Optimization with PDE constraints. All assumptions and computations are formal and the mean-field limit is established via an BBGKY-approach. Adjoints are dervided with formal $L^2$-calculus and closed by moments which can be interpreted as conditional expectations. 
A similar formal derivation can be found in \cite{mfchierachy}.

\medskip

To summarize, the aim of our contribution is multi-fold: 
\begin{enumerate}
    \item We take an applied viewpoint and establish first-order optimality conditions, in the KKT-sense, on the space of probability measures via a Lagrangian approach which can be used for numerical implementations. While the derivation of the linearized system (see \eqref{eq:linearization} in Lemma~\ref{lem:4.4}) bears similarities with those made in \cite{bonnet}, we provide an alternative strategy that circumvents the explicit use of Lagrangian flows. Additionally, we provide a characterization of the corresponding adjoint system, which takes the form of a momentum equation (see Theorem~\ref{eq:opt_W}). As the considerations can be lifted in a straightforward manner to second-order dynamical systems, we rigorously justify the numerical results shown in \cite{sheep1}.
    \item We build the bridge between the Hamiltonian-based results discussed in \cite{FornasierSolombrino, FornasierPP, mfchierachy, bonnet} and the ones obtained by Lagrangian approaches (see the chart in Section \ref{sec:relations}).
    \item We prove the convergence, with rates, of the sequence of optimal controls, as the number of interacting particles tends to infinity (cf.\ Theorem~\ref{thm:convergenceRate}). The convergence crucially relies on the optimality system obtained in (1).
\end{enumerate}
 
The main ideas are discussed in the following model example before we present our results in full details. 

\subsection{An illustrative example: Controlling a single particle}

Let us start with an illustrative example from classical optimal control in order to illustrate the idea without the complication of a mean-field limit. We denote the dimension of the state space by $d \ge 1$ and the time interval of interest is $[0,T]$ for some $T >0$.  We assume that the control variable $u$ acts on the velocity of a single particle with trajectory $x_t \in \R^d$ for $t \in [0,T]$ and we want to optimize a given functional depending on  the trajectory, i.e.\
\begin{equation}
(x,u) = \argmin \int_0^T g(x_t)\dd t, \quad \text{subject to}\qquad  \frac{\dd}{\dd t} x_t = v(x_t, u_t), 
\end{equation}
where $g$ and $v$ are given, sufficiently regular functions.

Then, the standard Pontryagin Maximum principle yields the existence of an adjoint variable $\xi$ satisfying 
\begin{equation}
\frac{\dd}{\dd t} \xi_t = \nabla_x g(x_t) + \nabla_x v(x_t,u_t) \xi_t,
\end{equation}
with terminal condition $\xi_T =0$. 

Moreover, the control $u$ satisfies the optimality condition 
\[
\nabla_u v(x_t,u_t) \cdot \xi_t = 0  \quad \text{ a.e. in } (0,T).
\]
These conditions can be translated into the calculation of a saddle-point of the microscopic Lagrangian
\begin{equation}
{\mathcal L}_{\text{micro}}(x,u,\xi) = \int_0^T g(x_t) \dd t + \int_0^T \left(\frac{\dd}{\dd t} x_t - v(x_t, u_t)\right) \cdot \xi_t \dd t.
\end{equation}
On the other hand, the discrete ODE can be translated into a macroscopic formulation via the method of characteristics: with initial value $\mu_0=\delta_{x_0}$ the concentrated measure
$\mu_t = \delta_{x_t}$ is the unique solution of 
\begin{equation}
\partial_t \mu_t + \nabla_x \cdot (v(x_t,u_t) \mu_t) = 0. 
\end{equation}
Since all measures are concentrated at $x_t$ we can reinterpret $u_t$ as the evaluation of a feedback-control $u(x,t)$ at $x=x_t$ and equivalently obtain
\begin{equation} \label{eq:transporteq0}
\partial_t \mu_t + \nabla_x \cdot (v(x,u_t) \mu_t) = 0, \qquad  \mu_0=\delta_{x(0)}.
\end{equation}
Since
$$ \int_0^T  g(x_t) \dd t  = \int_0^T \langle g, \mu_t\rangle \dd t,  $$
we can formulate an optimal control problem at the macroscopic level for the measure $\mu$ and the control variable $u$, i.e.\ 
\begin{equation}
(\mu, u) = \argmin \int_0^T \langle g, \mu_t\rangle \dd t  \qquad \text{subject to \eqref{eq:transporteq0}}.   
\end{equation} 
This macroscopic optimal control problem is in fact  equivalent to the microscopic one for a single particle, since we can choose the state space as the Banach space of Radon measures and the control space as an appropriate space of reasonably smooth functions on $\R^d \times (0,T)$. The uniqueness of solutions to the transport equation and the special initial value will always yield a concentrated measure and the identification $u_t = u(x_t,t)$ brings us back to the microscopic control.

However, with the macroscopic formulation we have another option to derive optimality conditions in these larger spaces, based on the Lagrangian 
\begin{equation}
{\mathcal L}_{\text{macro}}(\mu,u,\varphi) = \int_0^T \langle g, \mu_t\rangle \dd t + \int_0^T \langle \varphi,  	\partial_t \mu_t + \nabla_x \cdot (v(x,u_t) \mu_t) \rangle \dd t.
\end{equation}
Then, the macroscopic adjoint equation becomes 
\begin{equation}
\partial_t \varphi + v(x, u_t) \cdot \nabla_x \varphi = 0 
\end{equation}
and the optimality condition is given by
$$ - \langle \nabla_x \varphi, \nabla_u v(x,u_t) \mu_t \rangle = 0. $$
Due to the equivalence of the microscopic and macroscopic optimal control problem it is natural to ask for the relation between the adjoint variables $\xi$ and $\varphi$, which is not obvious at a first glance and yet only very little discussed. For first results in this direction see \cite{herty18}. Using the special structure of the solution $\mu_t$ and the identification with the microscopic control we can rewrite the optimality condition as
$$ \nabla_u v(x_t,u_t) \cdot (- \nabla_x \varphi(x_t,t)) = 0, $$
which induces the identification
\begin{equation}
\xi_t = - \nabla_x \varphi(x_t,t).
\end{equation}
Indeed, the method of characteristics confirms that $- \nabla_x \varphi(x_t,t)$ satisfies the microscopic adjoint equation.
This becomes more apparent if we consider only variations of $\mu$ that respect the nonnegativity and mass one condition of the probability measure, i.e.\ 
$$ \mu'  = - \nabla \cdot q, $$
with a vector-valued measure $q$ being absolutely continuous with respect to $\mu$. Then, an integration by parts argument directly reveals the relation to $-\nabla \varphi$.

By using variations of this kind we reinterpret the state space as a Riemannian manifold of Borel probability measures equipped with the 2-Wasserstein distance instead of the flat Banach space of Radon measures. The analysis of particle systems and limiting nonlinear partial differential equations in the 2-Wasserstein distance has been a quite fruitful field of study in the last years following the seminal papers \cite{Otto,JKO}. It is hence highly overdue to study such an approach also in the optimal control setting. 

We mention that the values of $\varphi$ outside the trajectory  are irrelevant for the specific control problem. Solving 
$$ \partial_t \varphi + v(\cdot, u_t) \cdot \nabla_x \varphi = 0,\qquad \nabla_u v(\cdot,u_t) \cdot \nabla \varphi = 0 \quad \text{on } \R^d \times (0,T), $$
we obtain the adjoints for all possible microscopic control problems with initial value in $\R^d$. This is just the well-known Hamilton-Jacobi-Bellmann equation, usually derived with different arguments.

\begin{rem}
	The above arguments can also be extended to a stochastic control system (see, e.g., \cite{roy2018}): 
	\begin{equation}
	(X,u) = \argmin \int_0^T E^x[g(X_t)] \dd t, \quad \text{subject to}\quad  {\dd}  X_t = v(X_t,u_t) \dd t + \sigma (X_t,u_t) \dd W_t, 
	\end{equation}
	with $W_t$ being a Wiener process and $X$ the solution to the stochastic differential equation with initial condition $X_0=x$. In this case the state equation for the probability density $\mu$ becomes 
	\begin{equation}
	\partial_t \mu_t + \nabla \cdot (v(x,u_t) \mu_t) = \frac{1}2 \Delta( \sigma^2 \mu_t), 
	\end{equation}
	and $\mu$ does not necessarily remain a concentrated measure in time, which corresponds to the stochasticity of the model. 
\end{rem}

\subsection{Control in the Mean-field Limit}

Having understood the relation between microscopic and macroscopic formulations of the optimal control problem, it seems an obvious step to consider optimal control problems for a high number of particles $N$ and their mean-field limit as $N\rightarrow \infty$, which is also the motivation for this paper. However, in the mean-field limit there is no microscopic particle system and corresponding optimal control problem, hence an additional step is needed to understand the connection in the limit. The basis for such a step is to understand the characteristic flow, which replaces the particle dynamics and naturally leads to an analysis in the Wasserstein distance. We will further investigate this mean-field setting in the remainder of the paper.

Here, we restrict our considerations to first order dynamics, but the present paper can be seen as an analytical justification of the convergence shown numerically in \cite{sheep1}. It is an additional contribution to the field of optimization of particle systems and their mean-field limits which is lively discussed in the recent years (e.g.\ \cite{Borzi,Dante,FornasierSolombrino,FornasierPP,CBO1,CBO2,Giacomo,mfchierachy}). Moreover, we would like to connect the fields of optimal control and gradient flows as well as optimal transport. In particular, we show relations between the adjoints derived by $L^2$-calculus and adjoints derived in the space of probability measures ($W_2$-adjoints).

The paper is organized as follows: in Section~\ref{sec:state} the microscopic model for $N$ particles and the corresponding mean-field equation is introduced. Further, we formulate the optimal control problems under investigation. The first main contribution of the article is the derivation of the first-order optimality conditions in the mesoscopic formulation given in Section~\ref{sec:FONC}. A discussion of the relation of this new calculus to the first-order optimality systems on the particle level and the first-order optimality condition based on $L^2$-calculus is the content of Section~\ref{sec:relations}. In Section~\ref{sec:ConvergenceRate} we show the second main result which is the convergence rate for the optimal controls as $N\to \infty$.

\section{Optimal Control Problems}\label{sec:state}
\noindent
First, we generalize the one-particle case  to  $N\in\N$ interacting particles, modeling, e.g., crowd dynamics \cite{sheep1}. Then, we derive its corresponding mean-field limit, i.e.\ the mesoscopic approximation. These two are the state systems for the respective optimal control problems. Further, we present the assumptions which are necessary for the well-posedness of the state systems.

\subsection{The State Models}
As before, $d\ge 1$ denotes the dimension of the state space and $[0,T]\subset\R$ with $T > 0$ is the time interval of interest. 
\subsubsection{The particle system}
The considered particle system consists of $N \in \mathbb{N}$ particles of the same type and $M\in\N$ controls represented by the functions
\begin{equation*}
x^i, u^\ell \colon [0,T] \rightarrow \mathbb{R}^d, \qquad \text{for } i = 1,\dots N \text{\;\;and\;\;}\ell=1,\dots,M.
\end{equation*}
The vectors 
\begin{equation*}
\x := (x^i)_{i=1,\dots,N}, \qquad \con := (u^\ell)_{\ell=1,\dots,M},
\end{equation*}
denote the states of the particles and the controls, respectively.

The particle system reads explicitly
\begin{equation}\label{eq:stateODE}
\frac{\dd}{\dd t} \x_t = v^N(\x_t,\con_t), \qquad \x_0=\hat \x,
\end{equation}
with given  $\hat \x \in \R^{dN}$ defining the initial states of the particles. 
The operator $v^N$ on the right-hand side strongly depends on the type of application. 

In the following, we denote by $\calP_2(\R^d)$ the space of Borel probability measures on $\R^d$ with finite second moment and equipped with the 2-Wasserstein distance, which makes $\calP_2(\R^d)$ a complete metric space, and by $\calP_2^{ac}(\R^d)$ the subset of $\calP_2(\R^d)$ containing probability measures with Lebesgue density. For the sake of completeness we recall the 2-Wasserstein distance:
\[
W_2^2(\mu, \nu) := \inf\limits_{\pi \in \Pi(\mu, \nu)}\biggl\{\int_{\R^d} |x -y|^2 \dd\pi(x,y)\biggr\},\qquad \mu,\nu\in \calP_2(\R^d),
\]
where $\Pi(\mu,\nu)$ denotes the set of all Borel probabililty measures on $[0,T] \times \R^{2d}$ that have $\mu$ and $\nu$ as first and second marginals respectively, i.e.\
\[
\pi(B \times \R^d) = \mu(B), \qquad \pi(\R^d \times B) = \nu(B) \quad \text{for } B \in \mathcal{B}(\R^{d}).
\]
In the rest of the article we denote by $\mathfrak{m}_2(\mu)$ the second moment of $\mu\in\calP_2(\R^d)$.

We further assume

\medskip

\begin{enumerate}[label=(A\arabic*)]
	\setcounter{enumi}{0}
	\item \label{A1} Let $v: \calP_2(\R^d) \times \R^{dM} \to \Lip_{loc}(\R^d)$ be given, such that for all $(\mu,\con) \in  \calP_2(\R^d) \times \R^{dM}$:
	\[
	 \langle v(\mu,\con)(x)-v(\mu,\con)(y),x-y\rangle \le C_l|x-y|^2,\qquad x,y\in\R^d.
	\]
	where the constant $C_l>0$ is independent of $(\mu,\con)$.
	
	We further define $v^N\colon \R^{dN}\times \R^{dM} \to \R^{dN}$ via
	$$
	 v_i^N(\x,\con) := v(\mu^N,\con)(x^i), \quad i=1,\ldots, N,
	$$
	where
	\begin{equation*}
		\mu_{\x}^N(A) = \frac{1}{N} \sum_{i=1}^N \delta_{x^i}(A),\qquad A\in\mathcal{B}(\R^d)\;(=\text{Borel $\sigma$-algebra}),
	\end{equation*}
	is the empirical measure for the state $\x\in\R^{dN}$.

	\item \label{A2}For any two $(\mu,\con), (\mu',\con')\in \calP_2(\R^d) \times \R^{dM}$, there exists a constant $C_v>0$, independent of $(\mu,\con)$ and $(\mu',\con')$, such that
	\[
	 \|v(\mu,\con)-v(\mu',\con')\|_{\sup} \le C_v\Bigl( W_2(\mu,\mu') + \|\con-\con'\|_2\Bigr).
	\]

\end{enumerate}

\begin{rem}
	By definition, $\mu_t^N$ assigns the probability $\mu_t^N(A)$ of finding particles with states within a measurable set $A \in \mathcal{B}(\R^d)$ on the state space $\R^d$ at time $t\ge0$. 
\end{rem}

Standard results from ODE theory yield the existence and uniqueness of a global solution.
\begin{prop}
	Assume \ref{A1} and \ref{A2}. Then, for given $\con \in \calC([0,T],\R^{dM})$ and $\hat \x \in \R^{dN}$ there exists a unique global solution $\x \in \calC^1([0,T],\R^{dN})$ of (\ref{eq:stateODE}).
\end{prop}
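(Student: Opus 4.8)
The result is a routine application of ODE theory, so the plan is: (i) use \ref{A2} to show that the right-hand side of \eqref{eq:stateODE} is continuous in $(t,\x)$ and invoke Peano's theorem for a local solution; (ii) use the one-sided Lipschitz bound in \ref{A1} for uniqueness; (iii) use \ref{A1} once more for an a priori bound on $\x_t$ that precludes finite-time blow-up, so the local solution extends to all of $[0,T]$. Two elementary observations feed into this. First, the empirical-measure map is Lipschitz, $W_2(\mu^N_{\x},\mu^N_{\tilde\x})\le\tfrac1{\sqrt N}|\x-\tilde\x|$ (use the coupling $\tfrac1N\sum_i\delta_{(x^i,\tilde x^i)}$), and in particular $W_2(\mu^N_{\x},\delta_0)=\sqrt{\mathfrak{m}_2(\mu^N_{\x})}=\tfrac1{\sqrt N}|\x|$. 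Second, by \ref{A2} the map $v(\mu,\con)(x)$ is continuous in $(\mu,\con)$ uniformly in $x$, while by \ref{A1} it is continuous in $x$ for each fixed $(\mu,\con)$, so that — together with the continuity, hence boundedness, of $\con$ on $[0,T]$ — the map $(t,\x)\mapsto v^N(\x,\con_t)$ is continuous on $[0,T]\times\R^{dN}$; Peano's theorem then yields a $\calC^1$ solution on a maximal interval $[0,\tau)$.

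For uniqueness I would, given two solutions $\x$ and $\tilde\x$ on a common interval, set $e_t:=|\x_t-\tilde\x_t|^2$ and split, for each $i$,
\[
\bigl\langle x^i_t-\tilde x^i_t,\, v(\mu^N_{\x_t},\con_t)(x^i_t)-v(\mu^N_{\tilde\x_t},\con_t)(\tilde x^i_t)\bigr\rangle
\]
into a term in which only the spatial argument changes, bounded by $C_l|x^i_t-\tilde x^i_t|^2$ via \ref{A1}, and a term in which only the measure changes, bounded by $C_v|x^i_t-\tilde x^i_t|\,W_2(\mu^N_{\x_t},\mu^N_{\tilde\x_t})$ via \ref{A2}; summing over $i$ and using the first observation together with $\sum_i|x^i_t-\tilde x^i_t|\le\sqrt N\,|\x_t-\tilde\x_t|$ gives $\tfrac{\dd}{\dd t}e_t\le C e_t$, hence $e\equiv0$ by Gr\"onwall. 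For global existence I would test \eqref{eq:stateODE} against $\x_t$: using \ref{A1} with $y=0$,
\[
\tfrac12\tfrac{\dd}{\dd t}|\x_t|^2=\sum_{i=1}^N\bigl\langle x^i_t,\, v(\mu^N_{\x_t},\con_t)(x^i_t)\bigr\rangle\le C_l|\x_t|^2+\Bigl(\sum_{i=1}^N|x^i_t|\Bigr)\bigl|v(\mu^N_{\x_t},\con_t)(0)\bigr|,
\]
and by \ref{A2} and the second elementary observation the last factor is $\le C\bigl(1+\tfrac1{\sqrt N}|\x_t|\bigr)$ uniformly in $t\in[0,T]$, so that $\tfrac{\dd}{\dd t}|\x_t|^2\le C'(1+|\x_t|^2)$ and Gr\"onwall bounds $|\x_t|$ on $[0,T]$. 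Therefore $\tau=T$, and $\x\in\calC^1([0,T],\R^{dN})$ because $t\mapsto\x_t$ is continuous and the right-hand side is continuous in $(t,\x)$.

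The only mildly delicate point is that \ref{A1} provides local Lipschitz continuity of $v(\mu,\con)(\cdot)$ only for each \emph{fixed} $(\mu,\con)$; rather than verifying a uniform-in-$(\mu,\con)$ local Lipschitz constant and invoking Picard--Lindel\"of directly, it is cleaner to obtain local existence from continuity alone (Peano) and derive uniqueness separately from the one-sided bound, as above. Everything else is bookkeeping with Cauchy--Schwarz and Gr\"onwall's inequality.
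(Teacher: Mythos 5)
The paper offers no written proof of this proposition---it simply appeals to ``standard results from ODE theory''---and your argument is a correct and complete instantiation of exactly that standard route: joint continuity of $(t,\x)\mapsto v^N(\x,\con_t)$ (via the $\tfrac1{\sqrt N}$-Lipschitz property of $\x\mapsto\mu^N_{\x}$ and \ref{A2}) gives local existence by Peano, the one-sided bound in \ref{A1} combined with \ref{A2} gives uniqueness by Gr\"onwall, and the linear-growth estimate rules out blow-up on $[0,T]$. Your closing remark is also well taken: since \ref{A1} only provides a one-sided Lipschitz constant uniform in $(\mu,\con)$ and merely local Lipschitz continuity otherwise, separating existence (Peano) from uniqueness (one-sided estimate) is indeed the clean way to proceed.
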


\begin{rem}
	In particular, for applications in the control of crowds we have that $v^N$ models interactions, i.e.\ particle-particle and particle-control interactions by means of forces (see \cite{CarrilloSurvey} and the references therein). Then, $v^N$  is often given by
	\begin{equation}\label{eq:structureV}
		v_i^N(\x, \con) = -\frac{1}{N}\sum_{j=1}^N K_1(x^i - x^j) - \sum_{\ell=1}^M K_2(x^i - u^\ell),
	\end{equation}
	for given interaction forces $K_1$ and $K_2$ modeling the interactions within the cloud of particles itself and of the particles with the controls, respectively.
\end{rem}

\subsubsection{The mean-field model}

In order to define the limiting problem for an increasing number of particles $N\to\infty$ explicitly, we consider the empirical measure
$\mu^N$.

Using the ideas from \cite{Neunzert,BraunHepp,Dobrushin} we derive the corresponding PDE formally as
\begin{equation}\label{longPDE}
\partial_t \mu_t + \nabla\cdot \big( v(\mu_t,\con_t) \mu_t \big)=0, \qquad \mu_0 = \hat \mu,
\end{equation}
which is the mean-field 1-particle distribution evolution equation, supplemented with the initial condition $\hat \mu \in  \calP_2^{ac}(\R^d)$, i.e.\ $\hat\mu$ has Lebesgue density.

\begin{rem}
	Here $v(\mu,\con)$ denotes the mean-field representation of $v^N(\x,\con)$. In fact, for the structure given by \eqref{eq:structureV}, we obtain
	\begin{equation}\label{mfV}
	(t,x)\mapsto v(\mu_t,\con_t)(x) = -(K_1 \ast \mu_t)(x) - \sum_{\ell=1}^M  K_2 (x - u_t^\ell).
	\end{equation}
\end{rem}

In the mean-field setting we consider the following notion of solution.
\begin{defi}
	We call $\mu\in \calC([0,T],\calP_2(\R^d))$ a {\em weak measure solution} of \eqref{longPDE} with initial condition $\hat\mu\in \calP_2(\R^d)$ iff for any test function $h\in \mathcal{C}_0^\infty([0, T)\times \R^d)$ we have
	\[
	\int_0^T \int_{\R^d} \big( \partial_t h_t  + v(\mu_t,\con_t)\cdot\nabla h_t \big)\dd \mu_t\dd t + \int_{\R^d} h_0\dd \hat\mu = 0.
	\]
\end{defi}

An existence and uniqueness result for solutions of \eqref{longPDE} may be found, e.g., in \cite{BraunHepp,WassersteinConvergence,Dobrushin,Golse}, where the notion of solution is established in the Wasserstein space $\calP_2(\R^d)$:
\begin{prop}\label{prop:exist_cont}
	Assume \ref{A1} and \ref{A2} and let $\hat \mu \in \calP_2(\R^d)$. Then, for  $\con \in \calC([0,T],\R^{dM})$ there exists a unique global (weak measure) solution $\mu \in \calC([0,T],\calP_2(\R^d))$ of (\ref{longPDE}). If additionally $\hat\mu\in\calP_2^{ac}(\R^d)$, then also $\mu\in \calC([0,T],\calP_2^{ac}(\R^d))$.
	
	\
	
	Further, for $\hat \mu_{\hat \x} =  1/N \, \sum_{i=1}^N \delta_{\hat x^i}$ we have $\mu_{\x,t} = \mu_{\x,t}^N$, where $\hat \x $ is the initial condition of (\ref{eq:stateODE}).
\end{prop}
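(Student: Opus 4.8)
The plan is to construct the solution via its characteristic flow and a Banach fixed-point argument in the complete metric space $\calC([0,T],\calP_2(\R^d))$ endowed with the uniform-in-time Wasserstein distance. Given a curve $\nu\in\calC([0,T],\calP_2(\R^d))$, I would freeze it inside the velocity field and solve the non-autonomous ODE $\dot X^\nu_t(x)=v(\nu_t,\mathbf{u}_t)(X^\nu_t(x))$ with $X^\nu_0(x)=x$: by \ref{A1} the field $v(\nu_t,\mathbf{u}_t)$ is locally Lipschitz in $x$ and satisfies a one-sided Lipschitz bound, which together yield a unique maximal flow, while \ref{A2} together with continuity of $\nu$ and $\mathbf{u}$ gives continuity of $t\mapsto v(\nu_t,\mathbf{u}_t)$ and the linear growth bound $|v(\nu_t,\mathbf{u}_t)(0)|\le|v(\delta_0,0)(0)|+C_v(\mathfrak m_2(\nu_t)^{1/2}+\|\mathbf{u}_t\|_2)$. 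Feeding this into $\tfrac{\dd}{\dd t}|X^\nu_t(x)|^2\le 2C_l|X^\nu_t(x)|^2+2|v(\nu_t,\mathbf{u}_t)(0)|\,|X^\nu_t(x)|$ and applying Gronwall excludes finite-time blow-up, so the flow is global; I then set $\Gamma(\nu)_t:=(X^\nu_t)_\#\hat\mu$, which is a continuous curve in $\calP_2(\R^d)$.

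The contraction is the heart of the matter. For two frozen curves $\nu,\tilde\nu$ I would couple the images through the common initial datum $\hat\mu$, so $W_2^2(\Gamma(\nu)_t,\Gamma(\tilde\nu)_t)\le\int|X^\nu_t(x)-X^{\tilde\nu}_t(x)|^2\dd\hat\mu(x)$, differentiate the integrand in $t$, and insert $v(\nu_t,\mathbf{u}_t)(X^{\tilde\nu}_t(x))$: the first term is bounded by $2C_l|X^\nu_t(x)-X^{\tilde\nu}_t(x)|^2$ using the one-sided Lipschitz bound of \ref{A1}, the second by $2C_v W_2(\nu_t,\tilde\nu_t)\,|X^\nu_t(x)-X^{\tilde\nu}_t(x)|$ using \ref{A2}. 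Gronwall then yields $\sup_{s\le t}W_2^2(\Gamma(\nu)_s,\Gamma(\tilde\nu)_s)\le C(t)\int_0^t W_2^2(\nu_s,\tilde\nu_s)\dd s$, a contraction on a short interval $[0,\tau]$ with $\tau$ depending only on $C_l,C_v$; I would extend to $[0,T]$ by restarting at $\mu_\tau$ and controlling $\mathfrak m_2(\mu_t)$ by the global a priori estimate obtained as above, working on balls $\{\nu:\sup_t\mathfrak m_2(\nu_t)\le R\}$ with $R$ chosen large enough that $\Gamma$ maps the ball into itself.

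To finish, I would show the fixed point $\mu$ is a weak measure solution by applying the chain rule along its characteristics, $\tfrac{\dd}{\dd t}\langle h_t,\mu_t\rangle=\langle\partial_t h_t+v(\mu_t,\mathbf{u}_t)\cdot\nabla h_t,\mu_t\rangle$, and integrating in time. For uniqueness at the level of weak measure solutions (not only Lagrangian ones), I would invoke the superposition principle: any such solution is transported by the characteristics of its own velocity field, which are unique by the one-sided Lipschitz bound, hence is a fixed point of $\Gamma$ and coincides with $\mu$. Preservation of absolute continuity follows because the inverse flow solves the time-reversed ODE, whose velocity $-v$ again obeys a one-sided Lipschitz bound, so $(X^\mu_t)^{-1}$ is locally Lipschitz and maps Lebesgue-null sets to null sets, forcing $\mu_t\ll\mathcal L^d$. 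Finally, for $\hat\mu=\hat\mu_{\hat\mathbf{x}}=\tfrac1N\sum_i\delta_{\hat x^i}$ with $\mathbf{x}$ solving \eqref{eq:stateODE}: since $v^N_i(\mathbf{x}_t,\mathbf{u}_t)=v(\mu^N_{\mathbf{x},t},\mathbf{u}_t)(x^i_t)$ by definition, the empirical curve $\mu^N_{\mathbf{x},t}$ is precisely the push-forward of $\hat\mu_{\hat\mathbf{x}}$ under the flow of $v(\mu^N_{\mathbf{x},t},\mathbf{u}_t)$ and satisfies the weak formulation with finite second moment, so uniqueness gives $\mu_{\mathbf{x},t}=\mu^N_{\mathbf{x},t}$. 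The main obstacles I anticipate are organising the global-in-time fixed point when the velocity is merely locally (and one-sidedly) Lipschitz in $x$—which is what forces the a priori moment bounds and the restart argument—and upgrading uniqueness from Lagrangian flows to arbitrary weak measure solutions via the superposition principle.
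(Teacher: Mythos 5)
The paper offers no proof of this proposition---it is quoted from the literature (Dobrushin, Braun--Hepp, Golse)---and your construction is exactly the standard argument behind those references: freeze the measure in the velocity field, solve the characteristic ODE, push forward the initial datum, and close a Banach fixed-point/Gronwall loop in $\calC([0,T],\calP_2(\R^d))$, with the superposition principle upgrading uniqueness from Lagrangian to arbitrary weak measure solutions. The existence, uniqueness, and empirical-measure parts of your argument are sound; note also that the paper's own Lemma~\ref{lem:forwardDobrushin} (proved in Appendix~\ref{appex:forwardDobrushin} by differentiating $W_2^2$ along two weak solutions) gives an alternative, shorter route to uniqueness that avoids invoking superposition.

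There is, however, one genuinely flawed step: the claim that the time-reversed velocity $-v$ ``again obeys a one-sided Lipschitz bound.'' It does not. From \ref{A1} one gets $\langle -v(x)+v(y),x-y\rangle \ge -C_l|x-y|^2$, i.e.\ the inequality reverses into a \emph{lower} bound, which is useless for controlling the backward flow; a merely one-sided Lipschitz field can concentrate mass and destroy absolute continuity. The conclusion you want is still true, but for a different reason: \ref{A1} assumes $v(\mu,\con)\in\Lip_{loc}(\R^d)$, and the a priori growth estimate confines the backward trajectories emanating from any compact set to a compact set $K'$ on which $v$ has a finite two-sided Lipschitz constant $L_{K'}$; the reverse Gronwall inequality $|X_t(x)-X_t(y)|\ge e^{-L_{K'}t}|x-y|$ then makes $X_t$ injective with locally Lipschitz inverse, so $X_t^{-1}$ maps Lebesgue-null sets to null sets and $\mu_t=(X_t)_\#\hat\mu\ll\mathcal{L}^d$ follows by exhausting a null set with compacts. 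You should reroute the absolute-continuity step through this local two-sided Lipschitz argument rather than through a (nonexistent) one-sided bound for $-v$.
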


\begin{rem}
Under the assumptions \ref{A1} and \ref{A2} we have enough regularity to use the classical method of characteristics to deduce for any $s \in [0,T]$ the existence of an unique global flow $Q_{\cdot}(\cdot,s) \in \mathcal{C}([0,T] \times \R^d; \R^d)$ satisfying
\begin{equation}\label{eq:W-Flow}
\frac{d}{dt} Q_t(x,s) = v(\mu_t,\con_t) \circ Q_t(x,s), \qquad Q_s(x,s) = x.
\end{equation}
In particular, for $s=0$ we obtain the nonlinear flow with a random initial condition $Q_0(x,0)$ distributed according to $\hat\mu$, i.e.\ $\text{law}(Q_0(x,0))=\hat \mu$. The solution $\mu$ of \eqref{longPDE} may then be explicitly expressed as $\mu_t = Q_t(\cdot,0)\#\mu_0$ for all $t \ge 0.$ We shall make use of this representation at several points in the remainder. For simplicity we set $Q_t(x) := Q_t(x,0).$
\end{rem}

The following stability statement will be useful in the coming results. Its proof may be found in Appendix~\ref{appex:forwardDobrushin}. 

\begin{lem}\label{lem:forwardDobrushin}
 Let the assumptions \ref{A1} and \ref{A2} hold, $\mu$ and $\mu'$ be solutions to the continuity equation \eqref{eq:PDE} for given controls $\con$, $\con'$ and initial data $\hat \mu$, $\hat\mu'$, respectively. Then, there exist positive constants $a$ and $b$ such that
 \begin{equation*}
 W_2^2(\mu_t,\mu_t') \le \Bigl(W_2^2(\hat \mu,\hat \mu') + b\|\con-\con'\|_{L^2((0,T),\R^{dM})}^2\Bigr) e^{a t}\qquad \text{for all\, $t\in[0,T]$}.
 \end{equation*}
\end{lem}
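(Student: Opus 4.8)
The plan is to run the standard Dobrushin-type coupling argument along the characteristic flows, carefully exploiting that \ref{A1} only provides a \emph{one-sided} Lipschitz bound on $v$ in the spatial variable. Let $Q_t$ and $Q_t'$ be the nonlinear flows associated with $(\mu,\con)$ and $(\mu',\con')$ via \eqref{eq:W-Flow} with $s=0$, so that $\mu_t=Q_t\#\hat\mu$ and $\mu_t'=Q_t'\#\hat\mu'$ by the flow representation recalled above. Pick an optimal plan $\pi_0\in\Pi(\hat\mu,\hat\mu')$ for $W_2(\hat\mu,\hat\mu')$ and transport it: $\pi_t:=(Q_t\otimes Q_t')\#\pi_0$ has first and second marginals $\mu_t$ and $\mu_t'$, hence is admissible for $W_2(\mu_t,\mu_t')$. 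With
$$D(t):=\int_{\R^{2d}}|Q_t(x)-Q_t'(y)|^2\dd\pi_0(x,y),$$
one has $W_2^2(\mu_t,\mu_t')\le D(t)$ on $[0,T]$ and $D(0)=W_2^2(\hat\mu,\hat\mu')$, since $Q_0=\mathrm{id}$ and $\pi_0$ is optimal.

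Next I would differentiate $D$ in time. Both flows are $\calC^1$ in $t$ and, by Proposition~\ref{prop:exist_cont}, the second moments of $\mu_t,\mu_t'$ stay bounded on $[0,T]$, so differentiation under the integral sign is justified and
$$\frac{\dd}{\dd t}D(t)=2\int_{\R^{2d}}\big\langle Q_t(x)-Q_t'(y),\,v(\mu_t,\con_t)(Q_t(x))-v(\mu_t',\con_t')(Q_t'(y))\big\rangle\dd\pi_0(x,y).$$
I would then split the velocity difference as
\begin{align*}
&v(\mu_t,\con_t)(Q_t(x))-v(\mu_t',\con_t')(Q_t'(y))\\
&\qquad=\big[v(\mu_t,\con_t)(Q_t(x))-v(\mu_t,\con_t)(Q_t'(y))\big]+\big[v(\mu_t,\con_t)(Q_t'(y))-v(\mu_t',\con_t')(Q_t'(y))\big].
\end{align*}
The inner product of $Q_t(x)-Q_t'(y)$ with the first bracket is bounded pointwise by $C_l|Q_t(x)-Q_t'(y)|^2$ thanks to the one-sided Lipschitz estimate in \ref{A1}, contributing $\le 2C_l D(t)$ after integration. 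For the second bracket, \ref{A2} gives the uniform-in-$y$ bound $C_v\big(W_2(\mu_t,\mu_t')+\|\con_t-\con_t'\|_2\big)$, so by Cauchy--Schwarz with respect to $\pi_0$ together with $W_2^2(\mu_t,\mu_t')\le D(t)$ its contribution is at most $2C_v\sqrt{D(t)}\big(\sqrt{D(t)}+\|\con_t-\con_t'\|_2\big)$.

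Adding the two contributions and using Young's inequality $2C_v\sqrt{D(t)}\,\|\con_t-\con_t'\|_2\le C_v D(t)+C_v\|\con_t-\con_t'\|_2^2$ yields
$$\frac{\dd}{\dd t}D(t)\le a\,D(t)+b\,\|\con_t-\con_t'\|_2^2,\qquad a:=2C_l+3C_v,\quad b:=C_v.$$
Gr\"onwall's lemma then gives $D(t)\le\bigl(D(0)+b\int_0^t\|\con_s-\con_s'\|_2^2\dd s\bigr)e^{at}\le\bigl(W_2^2(\hat\mu,\hat\mu')+b\|\con-\con'\|_{L^2((0,T),\R^{dM})}^2\bigr)e^{at}$, and combined with $W_2^2(\mu_t,\mu_t')\le D(t)$ this is exactly the assertion. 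The only delicate points are the admissibility of $\pi_t$ and the differentiation under the integral, both of which rest on the flow representation $\mu_t=Q_t\#\hat\mu$ and the uniform-in-time bound on second moments; the rest is a routine Gr\"onwall argument, and it is essential that \ref{A1} is invoked in its one-sided form, since $v(\mu,\con)$ is in general only locally Lipschitz in space.
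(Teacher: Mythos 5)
Your argument is correct, and it reaches the same differential inequality
\[
\frac{\dd}{\dd t}(\text{distance}^2)\le a\,(\text{distance}^2)+b\,\|\con_t-\con_t'\|_2^2
\]
with the same splitting of the velocity difference (one-sided Lipschitz term from \ref{A1} plus the sup-norm stability term from \ref{A2}, closed by Young and Gr\"onwall), but via a genuinely different technical vehicle. The paper works in an Eulerian fashion: it differentiates $t\mapsto W_2^2(\mu_t,\mu_t')$ directly, using the optimal transference plan $\pi_t$ between $\mu_t$ and $\mu_t'$ \emph{at each time} and the standard formula for the derivative of the squared Wasserstein distance along absolutely continuous curves (which requires the integrability condition $\int_0^T\!\int |v|^2\dd\mu_t\dd t<\infty$ and a measurable selection of optimal plans). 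You instead fix a single optimal plan $\pi_0$ at time zero, transport it by the two characteristic flows, and differentiate the resulting functional $D(t)\ge W_2^2(\mu_t,\mu_t')$. Your route is more elementary in that it avoids the derivative-of-$W_2^2$ machinery entirely and only needs the pointwise ODE satisfied by the flows; the price is that it leans on the Lagrangian representation $\mu_t=Q_t\#\hat\mu$, which the paper has made available but which would not survive in settings where the velocity field is too rough for a well-defined flow. You also correctly identify the two genuinely delicate points (admissibility of the transported plan and differentiation under the integral), and your use of \ref{A1} in its one-sided form is exactly where the paper uses it as well. The constants differ immaterially (the paper obtains $b=2C_v$ where you get $b=C_v$).
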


We end this section with an important observation:

\begin{rem}\label{generalization}
We emphasize that the particle problem is just a special case of the mean-field problem specified by the inital condition. Indeed, for the initial condition $\hat \mu =  1/N \, \sum_{i=1}^N \delta_{\hat x^i}$ we have $\mu_t = \mu_t^N$, where $\hat \x $ is the initial condition of (\ref{eq:stateODE}). Strictly speaking, we have only one optimization problem to consider in the following. Whether the problem at hand is of microscopic or mesoscopic type is determined by the initial condition. 
\end{rem}

\subsection{Optimal Control Problem}\label{sec:OptProb}

We define the set of admissible controls as
\begin{equation}\label{eq:Uad}
\Uad = \{ \con \in H^1((0,T), \R^{dM}) \colon \con_0 = \hat \con\}, \quad \text{with}\;\; \hat\con \in \R^{dM} \; \text{given}. 
\end{equation}
This choice of $\Uad$ ensures the continuity of the controls (compare also the previous existence results).

\medskip

For the study of the respective optimal control problem we require:

\begin{enumerate}[label=(A\arabic*)]
	\setcounter{enumi}{2}
	\item \label{A3} The cost functional is of separable type, i.e.\ 
	\begin{equation}\label{eq:cost}
	    J(\mu,\con) = \int_0^T J_1(\mu_t)\dd t + J_2(\con),
	\end{equation}
	where $J_2$ is continuously differentiable, weakly lower semicontinuous and coercive on $\Uad$. Further, $J_1(\mu)$ is a cylindrical function of the form
	\[
	    J_1(\mu) = j(\langle g_1,\mu \rangle, \dots, \langle g_L, \mu \rangle),
	\]
	where $j \in \mathcal{C}^1(\R^L)$ and $g_\ell \in \mathcal{C}^1(\R^d),\; \ell=1,\dots,L,$ such that $\langle g_\ell,\mu\rangle:=\int_{\R^d} g_\ell\dd\mu <\infty$, and
	\[
	 |\nabla g_\ell|(x) \le C_g(1+|x|)\qquad \text{for all $x\in\R^d$ and $\ell=1,\ldots,L$},
	\]
	for some constant $C_g>0$.
	
	\item \label{A4} For the microscopic case, we define $J_1^N(\x) := J_1(\mu_{\x}^N)$ as well as
	\begin{equation}\label{eq:cost_N}
	    J_N(\x,\con) := \int_0^T J_1^N(\x_t)\dd t +J_2(\con),
	\end{equation}
	and assume that $J_1^N $ is continuously differentiable.
\end{enumerate}

\begin{rem}
	\begin{enumerate}
        \item Note, that the differentiability properties in the previous assumptions are only necessary for the derivation of the optimality conditions in the next sections, and not for the existence of the respective optimal controls.
        \item (A4) essentially restricts the type of costs that can be considered for the particle system. In particular, the microscopic cost should have a corresponding mean-field counterpart. This is indeed the case whenever $J_1^N(\x)$ may be written as function acting on its corresponding empirical measure $\mu_{\x}^N$.
    \end{enumerate}
\end{rem}

A direct consequence of assumption \ref{A3} is the continuity of $J_1$ in the Wasserstein metric.
\begin{lem}\label{lem:ass3}
	Assume \ref{A3} and let $\mu,\nu\in \calC([0,T],\calP_2(\R^d))$ with
	\[
	 M_1:=\max_{\ell=1,\ldots,L}\sup_{t\in[0,T]} \left\{|\langle g_\ell,\mu_t\rangle| + |\langle g_\ell,\nu_t\rangle|\right\}<\infty,\qquad M_2:=\sup_{t\in[0,T]}\left\{ \mathfrak{m}_2(\mu_t) + \mathfrak{m}_2(\nu_t)\right\} <\infty.
	\] 
	Then, there exists a constant $C_j>0$, independent of $t\in[0,T]$ such that
	\[
	 |J_1(\mu_t) - J_1(\nu_t)| \le C_j  W_2(\mu_t,\nu_t)\qquad\text{for all\, $t\in[0,T]$}.
	\]
\end{lem}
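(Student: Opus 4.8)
The plan is to exploit the cylindrical structure of $J_1$ and reduce the problem to estimating differences of the linear functionals $\langle g_\ell,\mu_t\rangle - \langle g_\ell,\nu_t\rangle$. Write $a(t) = (\langle g_1,\mu_t\rangle,\dots,\langle g_L,\mu_t\rangle)$ and $b(t) = (\langle g_1,\nu_t\rangle,\dots,\langle g_L,\nu_t\rangle)$. By assumption $M_1 < \infty$, both $a(t)$ and $b(t)$ lie in the compact box $K := [-M_1,M_1]^L \subset \R^L$ for every $t \in [0,T]$. Since $j \in \mathcal{C}^1(\R^L)$, its gradient is bounded on $K$, say by $L_j := \sup_{z\in K}|\nabla j(z)|$, so that by the mean value theorem
\[
 |J_1(\mu_t) - J_1(\nu_t)| = |j(a(t)) - j(b(t))| \le L_j\,|a(t)-b(t)| \le L_j \sum_{\ell=1}^L |\langle g_\ell,\mu_t\rangle - \langle g_\ell,\nu_t\rangle|.
\]
It therefore suffices to bound each term $|\langle g_\ell,\mu_t\rangle - \langle g_\ell,\nu_t\rangle|$ by a constant multiple of $W_2(\mu_t,\nu_t)$.

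The second step is the key estimate: for a fixed $\ell$ and a fixed $t$, let $\pi \in \Pi(\mu_t,\nu_t)$ be an optimal coupling realizing $W_2(\mu_t,\nu_t)$. Then
\[
 |\langle g_\ell,\mu_t\rangle - \langle g_\ell,\nu_t\rangle| = \left| \int_{\R^{2d}} \bigl( g_\ell(x) - g_\ell(y)\bigr)\dd\pi(x,y)\right| \le \int_{\R^{2d}} |g_\ell(x)-g_\ell(y)|\dd\pi(x,y).
\]
Using the growth bound $|\nabla g_\ell|(x) \le C_g(1+|x|)$ from \ref{A3} and the fundamental theorem of calculus along the segment from $y$ to $x$, one gets the pointwise inequality $|g_\ell(x)-g_\ell(y)| \le C_g\,(1 + |x| + |y|)\,|x-y|$. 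Inserting this and applying Cauchy--Schwarz with respect to $\pi$,
\[
 |\langle g_\ell,\mu_t\rangle - \langle g_\ell,\nu_t\rangle| \le C_g \left(\int_{\R^{2d}} (1+|x|+|y|)^2\dd\pi\right)^{1/2} \left(\int_{\R^{2d}} |x-y|^2\dd\pi\right)^{1/2}.
\]
The second factor is exactly $W_2(\mu_t,\nu_t)$. For the first factor, expand $(1+|x|+|y|)^2 \le C(1 + |x|^2 + |y|^2)$ and integrate using that the marginals of $\pi$ are $\mu_t$ and $\nu_t$, so that $\int (1+|x|^2+|y|^2)\dd\pi = 1 + \mathfrak{m}_2(\mu_t) + \mathfrak{m}_2(\nu_t) \le 1 + M_2$, which is bounded uniformly in $t$ by hypothesis. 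Combining the two steps gives $|J_1(\mu_t)-J_1(\nu_t)| \le C_j\,W_2(\mu_t,\nu_t)$ with $C_j := L_j L\, C_g\, C(1+M_2)^{1/2}$, which is independent of $t$.

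The only mildly delicate point is the pointwise growth estimate $|g_\ell(x)-g_\ell(y)| \le C_g(1+|x|+|y|)|x-y|$: one parametrizes $\gamma(s) = y + s(x-y)$, $s\in[0,1]$, writes $g_\ell(x)-g_\ell(y) = \int_0^1 \nabla g_\ell(\gamma(s))\cdot(x-y)\dd s$, bounds $|\nabla g_\ell(\gamma(s))| \le C_g(1+|\gamma(s)|) \le C_g(1+|x|+|y|)$ since $|\gamma(s)| \le \max\{|x|,|y|\} \le |x|+|y|$, and pulls the resulting constant out of the integral. Everything else is a routine application of the Kantorovich definition of $W_2$ together with Cauchy--Schwarz, so I expect no real obstacle; the structural hypotheses \ref{A3} (in particular the at-most-linear growth of $\nabla g_\ell$) together with the uniform second-moment bound $M_2$ are precisely what make the argument close.
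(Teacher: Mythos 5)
Your proposal is correct and follows essentially the same route as the paper's proof: both reduce to the linear functionals via the mean value theorem for $j$ (with $\nabla j$ bounded on a compact set determined by $M_1$), and both estimate $|\langle g_\ell,\mu_t\rangle-\langle g_\ell,\nu_t\rangle|$ by integrating $\nabla g_\ell$ along the segment under an optimal coupling, using the linear growth bound and Cauchy--Schwarz to produce the second-moment factor and $W_2(\mu_t,\nu_t)$. The only differences are cosmetic (order of the two steps, and $(1+M_2)^{1/2}$ versus $1+2\sqrt{M_2}$ in the constant).
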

\begin{proof}
    Let $\mu$ and $\nu\in\calP_2(\R^d)$ be arbitrary. Then, for each $\ell=1,\ldots,L$, we have by \ref{A3}, the mean-value theorem and H\"older's inequality that
    \begin{align*}
        |\langle g_\ell,\mu\rangle - \langle g_\ell,\nu\rangle| &\le \iint_{\R^d\times\R^d} |g_\ell(x)-g_\ell(y)|\dd\pi \\
        &\le \iint_{\R^d\times\R^d} \int_0^1 |\nabla g_\ell|((1-\tau) x + \tau y)|y-x|\dd\tau \dd\pi \\
        &\le \iint_{\R^d\times\R^d} \int_0^1 C_g(1+|(1-\tau) x + \tau y|)|y-x|\dd\tau \dd\pi \\
        &\le C_g\left[ 1 + \left(\sqrt{\mathfrak{m}_2(\mu)} + \sqrt{\mathfrak{m}_2(\nu)}\right)\right] W_2(\mu,\nu),
    \end{align*}
    where $\pi$ is the optimal coupling between $\mu$ and $\nu$. In particular, the estimate above shows that the mapping $\langle g_\ell,\cdot\rangle:\calP_2(\R^d)\to \R$ is locally Lipschitz for every $\ell=1,\ldots,L$.
    
    Denote $p_t = (\langle g_1,\mu_t\rangle,\ldots,\langle g_L,\mu_t\rangle)$ and $q_t = (\langle g_1,\nu_t\rangle,\ldots,\langle g_L,\nu_t\rangle)$. The assumptions on $\mu$ and $\nu$, and the previous estimate yields
    \begin{align*}
        |J_1(\mu_t) - J_1(\nu_t)| &\le \int_0^1 |Dj(q_t + \tau( p_t-q_t))||p_t-q_t|\dd\tau \\
        &\le LC_g \left( 1 + 2\sqrt{M_2}\right)\left(\sup\nolimits_{p\in B_{LM_1}}|Dj(p)|\right)W_2(\mu_t,\nu_t),
    \end{align*}
    where we used the fact that $|(1-\tau)q_t + \tau p_t| \le L M_1$ for all $\tau\in[0,1]$, $t\in[0,T]$.
\end{proof}

\begin{rem}\label{rem:costfun}
	Note that cost functionals that track the center of mass and the variance of a crowd satisfy \ref{A3} and \ref{A4}. In fact, for $\lambda_1,\lambda_2,\lambda_3>0$, 
	\begin{gather*} 
	j(y_1,y_2) = \frac{\lambda_1}{2} |y_1 - x_\text{des}|^2 + \frac{\lambda_2}{4}|y_2 - y_1|^2, \quad g_1(x) = x, \quad g_2(x) = |x|^2, \\ J_2(\con)=\frac{\lambda_3}{2} \sum_{m=1}^M  \int_0^T \left| \frac{\dd}{\dd t} u_t^m \right|^2 \dd t 
	\end{gather*}
	fit into the setting. Therefore, the assumptions are rather general and not restrictive for applications (cf. \cite{sheep1}). 
\end{rem}

The well-posedness of the state problem justifies the notation $\mu(\con)$ assigning the unique solution of the state equation to the control. Then, the optimal control problem we investigate in the following is given by 

\begin{problem}
	Find $\bar\con \in \Uad$ such that
	\begin{equation}\label{OCMF} \tag{${\bf P_\infty}$}
	(\mu(\bar\con),\bar\con) = \argmin_{\mu,\con} J(\mu,\con) \quad \text{subject to } \eqref{longPDE}. 
	\end{equation}
\end{problem}
For later use, we note that in the particle case, i.e.\ for discrete initial data (cf. Remark~\ref{generalization}), we can rewrite the optimization problem as follows:\\
	For $N\in\N$ fixed, find $\bar\con^N \in \Uad$ such that
	\begin{equation}\label{OCN} \tag{${\bf P_N}$}
	(\bar\x^N(\bar\con^N),\bar \con^N) = \argmin_{\x,\con}  J_N(\x,\con) \quad \text{subject to } \eqref{eq:stateODE}. 
	\end{equation}

Using the standard argument based on the boundedness of a minimizing sequence in $\Uad$ and continuity properties of $J$ stated in \ref{A3} and \ref{A4}, we obtain the following existence result:

\begin{thm}
	Assume \ref{A1}--\ref{A4}. Then, the optimal control problem \eqref{OCMF} has a solution $ (\mu(\bar\con),\bar\con) \in \calC([0,T],\calP_2(\R^d)) \times \Uad$.
\end{thm}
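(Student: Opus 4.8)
The plan is to use the direct method of the calculus of variations. First I would take a minimizing sequence $(\con_k)_{k\in\N}\subset\Uad$ for the reduced functional $\con\mapsto J(\mu(\con),\con)$, where $\mu(\con)$ is the unique weak measure solution provided by Proposition~\ref{prop:exist_cont}. Since $J$ is bounded below along this sequence and $J_2$ is coercive on $\Uad$, and $J_1\ge 0$ is not automatic — so I would first record that the running cost term $\int_0^T J_1(\mu_t)\dd t$ is bounded below (either by an explicit lower bound from \ref{A3}, or after noting that the second moments of $\mu_t(\con_k)$ stay controlled on bounded control sets via Lemma~\ref{lem:forwardDobrushin} together with the initial datum $\hat\mu$), one obtains a uniform bound $\|\con_k\|_{H^1((0,T),\R^{dM})}\le C$. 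By reflexivity of $H^1$, a subsequence (not relabeled) converges weakly, $\con_k\rightharpoonup\bar\con$ in $H^1$; since the affine constraint $\con_0=\hat\con$ is closed and convex, $\bar\con\in\Uad$.

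Next I would pass to the limit in the state equation. By the compact embedding $H^1((0,T),\R^{dM})\hookrightarrow\hookrightarrow \calC([0,T],\R^{dM})$ (Arzelà–Ascoli / Rellich in one time variable), we have $\con_k\to\bar\con$ strongly in $\calC([0,T],\R^{dM})$, hence in particular in $L^2((0,T),\R^{dM})$. Applying the stability estimate Lemma~\ref{lem:forwardDobrushin} with the \emph{same} initial datum $\hat\mu$ for all $k$ gives
\begin{equation*}
W_2^2(\mu_t(\con_k),\mu_t(\bar\con)) \le b\,\|\con_k-\bar\con\|_{L^2((0,T),\R^{dM})}^2\, e^{at}\xrightarrow{k\to\infty} 0
\end{equation*}
uniformly in $t\in[0,T]$, so $\mu(\con_k)\to\mu(\bar\con)$ in $\calC([0,T],\calP_2(\R^d))$ and $\bar\mu:=\mu(\bar\con)$ is the state associated with $\bar\con$. (Strictly one should also verify that the weak measure solution with datum $\hat\mu$ is the continuity-equation solution covered by Lemma~\ref{lem:forwardDobrushin}; this is exactly the content of Proposition~\ref{prop:exist_cont}.)

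Finally I would establish lower semicontinuity of $J$ along this sequence. For the running cost, the uniform convergence of $\mu(\con_k)$ in $\calP_2(\R^d)$ together with the uniform second-moment bound $\sup_k\sup_t \mathfrak{m}_2(\mu_t(\con_k))<\infty$ and the uniform bound on $\sup_t|\langle g_\ell,\mu_t(\con_k)\rangle|$ lets me invoke the local Lipschitz estimate of Lemma~\ref{lem:ass3} to get $J_1(\mu_t(\con_k))\to J_1(\bar\mu_t)$ for each $t$, with a uniform bound, so dominated convergence yields $\int_0^T J_1(\mu_t(\con_k))\dd t\to\int_0^T J_1(\bar\mu_t)\dd t$. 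For the control cost, $J_2$ is weakly lower semicontinuous on $\Uad$ by \ref{A3}, so $J_2(\bar\con)\le\liminf_k J_2(\con_k)$. Combining, $J(\bar\mu,\bar\con)\le\liminf_k J(\mu(\con_k),\con_k)=\inf$, so $\bar\con$ is optimal and the pair $(\bar\mu,\bar\con)\in\calC([0,T],\calP_2(\R^d))\times\Uad$ is the sought solution. The main obstacle — and the only non-routine point — is the passage to the limit in the \emph{nonlinear, nonlocal} state constraint: one must be careful that the stability estimate genuinely applies (same initial datum, control convergence in the right norm) rather than attempting a direct weak-convergence argument in the PDE, which would be delicate because of the product $v(\mu_t,\con_t)\mu_t$; using Lemma~\ref{lem:forwardDobrushin} sidesteps this cleanly.
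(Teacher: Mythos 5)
Your argument is correct and is exactly the route the paper takes (it only sketches it): the direct method with a minimizing sequence in $\Uad$, coercivity and weak lower semicontinuity of $J_2$ from \ref{A3}, the compact embedding $H^1((0,T),\R^{dM})\hookrightarrow\hookrightarrow\calC([0,T],\R^{dM})$, the stability estimate of Lemma~\ref{lem:forwardDobrushin} to pass to the limit in the state, and Lemma~\ref{lem:ass3} for continuity of the running cost. Your write-up is in fact more careful than the paper's one-line justification, in particular in flagging that one must first secure a lower bound on $\int_0^T J_1(\mu_t)\,\dd t$ before coercivity of $J_2$ yields boundedness of the minimizing sequence.
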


\begin{rem}
The well-posedness of \eqref{OCN} follows directly from the above theorem, as the particle problem is a special case of \eqref{OCMF}, see Remark~\ref{generalization}. Nevertheless, one can prove the well-posedness of \eqref{OCN} also directly using classical techniques in the optimal control of ODEs.
\end{rem}

\section{First-order optimality conditions in the Wasserstein space $\calP_2(\R^d)$}\label{sec:FONC}
\noindent
The main objective of this section is to derive the first-order optimality conditions (FOC) for the optimal control problem \eqref{OCMF} in the framework of probability measures with bounded second moment equipped with the 2-Wasserstein distance.
For the sake of a smooth presentation we restrict the interaction terms to the special ones defined in (\ref{eq:structureV}) and (\ref{mfV}), respectively. This allows us to pose the following regularity assumption

\medskip

\begin{enumerate}[label=(A\arabic*)]
	\setcounter{enumi}{4}
	\item \label{A5} $K_1,K_2 \in \calC_b^2(\R^d)$.
\end{enumerate}

\begin{rem}\label{rem:A5}
	Note that assumption \ref{A5} directly implies that $(t,x)\mapsto v(\mu_t,\con_t)(x)$ defined by \eqref{mfV} is an element of $\calC_b^1([0,T]\times \R^d, \R^d)$ for every $\mu\in \calC([0,T],\calP_2(\R^d))$ and $\con\in \calC([0,T],\R^{dM})$ with
	\[
	 K_v:=\sup_{\mu,\con} \bigl\{ \|v(\mu,\con)\|_{\infty} + \|D v(\mu,\con)\|_{\infty}\bigr\} <\infty.
	\]
	In particular, the flow $(t,x)\mapsto Q_t(x)$ is $\calC_b^1([0,T] \times \R^d,\R^d)$ by standard arguments (cf.\ \cite{emmrich}). 
\end{rem}

\medskip

For given initial condition $\hat \mu$ we define the state space as
$$ \mathcal{Y} = \Bigl\{ \mu \in \mathcal{C}([0,T],\calP_2(\R^d))\; \colon\; \mu_t\left|_{t=0}\right. = \hat\mu \in \calP_2^{ac}(\R^d)  \Bigr\}. $$
As the optimization in the $W_2$ setting is not well-known, we begin by discussing known results (see~\cite[Chapter 8.1]{Ambrosio}) regarding the constraint
\begin{equation}\label{eq:PDE}
\partial_t \mu_t + \nabla \cdot (v(\mu_t ,\con_t)\,\mu_t ) = 0, \quad \mu_t\left|_{t=0} \right. = \hat\mu\in \calP_2^{ac}(\R^d).
\end{equation}
Recall Proposition~\ref{prop:exist_cont} that provides for each $\con\in \Uad$ a unique solution $\mu\in \calC([0,T],\calP_2^{ac}(\R^d))$ of \eqref{eq:PDE}. In particular, $\mu$ satisfies
\begin{equation}\label{eq:weak}
E(\mu,\con) [\varphi] := \langle \varphi_T, \mu_T \rangle - \langle \varphi_0, \hat\mu \rangle - \int_0^T \langle \partial_t \varphi + v(\mu_t,\con_t) \cdot \nabla \varphi_t , \mu_t \rangle \dd t = 0, 
\end{equation}
for all $\varphi\in \calA:=\calC_c^1([0,T]\times\R^d)$. Therefore, there is a well-defined solution operator $S\colon \con\mapsto \mu$, which allows us to recast the constrained minimization problem as
\[
 \min \hat J(\con) := J(S\con,\con),\qquad \con\in\Uad,
\]
where $\hat J$ is the so-called reduced functional.

\begin{defi}
	A pair $(\mu,\con)\in \mathcal{Y}\times \Uad$ is said to be {\em admissible} if $E(\mu,\con)[\varphi]=0$ for all $\varphi\in\calA$.
\end{defi}

Unfortunately, the reduced cost functional is not handy in deriving the first-order optimality conditions for~\eqref{OCMF}. For this reason, we will take an \textit{extended-Lagrangian} approach. We begin by observing that \eqref{OCMF} may be recast as
\[
 \min_{(\mu,\con)}\; \calI(\mu,\con), \quad \text{with } \calI(\mu,\con) := \begin{cases}
  J(\mu,\con) & \text{if\; $E(\mu,\con)[\varphi]=0$\; for every\; $\varphi\in \calA$},\\ 
  +\infty & \text{otherwise}
 \end{cases},
\]
which may be further reformulated as
\begin{align}\label{eq:opt_2}
 \min_{(\mu,\con)}\; \calI(\mu,\con) =  \min_{(\mu,\con)} \Bigl\{ J(\mu, \con) + \sup_{\varphi \in \calA} E(\mu,\con)[\varphi] \Bigr\}.
\end{align}
Indeed, notice that $\sup_{\varphi\in \calA} E(\mu,\con)[\varphi]\ge 0$, since $\varphi\equiv 0$ implies $E(\mu,\con)[0]=0$ for every $(\mu,\con)$. Therefore, if $E(\mu,\con)[\varphi]>0$ for some $\varphi$, the linearity in $\varphi$ of $E$ yields $E(\mu,\con)[\alpha\varphi]=\alpha E(\mu,\con)[\varphi]$ for every $\alpha>0$, which consequently shows that $\sup_\varphi E(\mu,\con)[\varphi]=+\infty$.

Under the separation assumption on $J$, i.e.\ $J(\mu,\con)= J_1(\mu)+ J_2(\con)$, \eqref{eq:opt_2} becomes
\[
 \min_{(\mu,\con)}\; \calI(\mu, \con) =  \min_\con \, \biggl\{ J_2(\con) + \min_\mu\sup_{\varphi\in\calA}\, \Bigl\{  J_1(\mu) + E(\mu,\con)[\varphi] \Bigr\}\biggr\} = \min_\con\, \Bigl\{ J_2(\con) + \chi(\con)\Bigr\},
\]
with
\[
 \chi(\con) = \min_\mu\sup_{\varphi\in\calA}\, \Bigl\{  J_1(\mu) + E(\mu,\con)[\varphi] \Bigr\}.
\]
In the following we derive a necessary condition for $(\mu,\con)$ to be a stationary point. Let $( \bar\mu, \bar\con)$ be an optimal pair, $\delta \ge 0$ and $\con^\delta = \bar\con + \delta \h$ be a perturbation of $\bar\con$ for an arbitrary smooth map $\h \colon (0,T) \rightarrow \R^{dM},$ such that $\con^\delta \in \Uad$ and there exists a unique $\mu^\delta \in \mathcal{C}([0,T], \calP_2^{ac}(\R^d))$ satisfying $E(\mu^\delta, \con^\delta)[\varphi] = 0$ for all $\varphi \in \calA.$ Then 
\begin{align*}
 \chi(\con^\delta) &= \min_\mu\sup_{\varphi\in\calA}\, \Bigl\{  J_1(\mu) + E(\mu,\con^\delta)[\varphi]\Bigr\} = J_1(\mu^\delta) \\
 &= J_1(\mu^\delta) - J_1(\bar\mu) + \min_\mu\sup_{\varphi\in\calA}\, \Bigl\{  J_1(\mu) + E(\mu,\bar\con)[\varphi]\Bigr\} \\
 &= J_1(\mu^\delta) - J_1(\bar\mu) + \chi(\bar\con),
\end{align*}
and the directional derivative of $\calG := J_2 + \chi$ at $\bar\con$ along $\h$ is given by
\begin{align*}
\lim_{\delta\to 0}\frac{\calG(\con^\delta)-\calG(\bar\con)}{\delta} = \lim_{\delta\to 0}\frac{[J_1(\mu^\delta) - J_1(\bar\mu)] + [J_2(\con^\delta) - J_2(\bar\con)]}{\delta},
\end{align*}
which requires us to know the relationship between $\mu^\delta$ and $\bar\mu$. 

\begin{rem}
 Note, that Lemma~\ref{lem:forwardDobrushin} above provides a stability estimate of the form
 \[
  W_2(\mu^\delta_t,\mu_t) \le \delta \sqrt{b}e^{aT/2}\|\h\|_{L^2((0,T),\R^{dM})}\qquad\text{for all\, $t\in[0,T]$},
 \]
 for appropriate constants $a,b>0$. Hence, for each $t\in[0,T]$, the curve $[0,\infty)\ni \delta \mapsto \mu^\delta_t \in \calP_2(\R^d)$ starting from $\mu_t$ at $\delta=0$ is absolutely continuous w.r.t.~the 2-Wasserstein distance. In this case, there exists a vector field $\psi_t\in L^2(\mu_t,\R^d)$ for each $t\in[0,T]$ satisfying \cite[Proposition~8.4.6]{Ambrosio}
 \begin{equation}\label{eq:vector-field}
  \lim_{\delta\to 0} \frac{W_2(\mu_t^\delta,(id+\delta \psi_t)_\#\mu_t)}{\delta} = 0.
 \end{equation}
Furthermore,
\[
 W_2^2((id + \delta\psi_t)_\#\mu_t,\mu_t) \le \int_{\R^d} |x + \delta\psi_t(x)-x|^2\dd\mu_t(x) = \delta^2\int_{\R^d} |\psi_t(x)|^2\dd\mu_t(x),
\]
where the explicit coupling $\pi_t = (id+\delta\psi_t,id)_\#\mu_t$ was used.
In particular, we have that
\[ 
 \limsup_{\delta\to 0} \frac{W_2(\mu_t^\delta,\mu_t)}{\delta} = \limsup_{\delta\to 0} \frac{W_2((id+\delta \psi_t)_\#\mu_t,\mu_t)}{\delta}\le \sqrt{\iint_{\R^d\times\R^d} |\psi_t|^2\dd\mu_t}.
\]
\end{rem}
The previous remark allows us to establish an explicit relationship between $\psi_t$ and $\h$.

\begin{lem}\label{lem:4.4}
 Let $(\mu,\con)$ be an admissible pair, $\h\in\calC_c^\infty((0,T),\R^{dM})$ and $\con^\delta=\con + \delta {\bf h}$ such that 
 \begin{enumerate}
  \item[(i)] $\con^\delta\in \Uad$, and
  \item[(ii)] there exists $\mu^\delta\in\calC([0,T],\calP_2^{ac}(\R^d))$ satisfying $E(\mu^\delta,\con^\delta)=0$,
 \end{enumerate}
 for $0<\delta\ll 1$ sufficiently small. If $\psi\in \calC_b^1((0,T)\times \R^d)$ with $\psi_0\equiv 0$ satisfies
  \begin{align}\label{eq:linearization}
  \partial_t\psi_t +  D\psi_t\,v(\mu_t,\con_t) = \calK(\mu_t,\con_t)[\psi_t,\h_t]\qquad \text{for $\mu_t$-almost every $x\in\R^d$},
 \end{align}
 for a bounded Borel map $(t,x)\mapsto \calK(\mu_t,\con_t)[\psi_t,\h_t](x)$ satisfying
 \begin{align}\label{eq:K_limit}
   \lim_{\delta\to 0} \int_0^T \int \left|\frac{v(\nu_t^\delta,\con_t^\delta)\circ(id + \delta\psi_t)(x) - v(\mu_t,\con_t)(x)}{\delta} -  \calK(\mu_t,\con_t)[\psi_t,\h_t](x)\right|^2 \dd\mu_t(x)\dd t = 0,
 \end{align}
 then \eqref{eq:vector-field} holds with this $\psi$, i.e.\
 \[
    \lim_{\delta\to 0} \frac{W_2(\mu_t^\delta,(id+\delta \psi_t)_\#\mu_t)}{\delta} = 0.
 \]
\end{lem}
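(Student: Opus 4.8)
The plan is to pass to the characteristic flows behind the two measure curves and reduce the claim to an $L^2(\hat\mu)$-estimate between two flow maps, which is then closed by a Grönwall argument based on the one-sided Lipschitz bound in \ref{A1}. Recall from the Remark following Proposition~\ref{prop:exist_cont} that $\mu_t = Q_t\#\hat\mu$, with $Q$ the flow of the (by \ref{A5} and Remark~\ref{rem:A5}) $\calC_b^1$ vector field $(t,x)\mapsto v(\mu_t,\con_t)(x)$, and likewise $\mu_t^\delta = Q_t^\delta\#\hat\mu$, with $Q^\delta$ the flow of $(t,x)\mapsto v(\mu_t^\delta,\con_t^\delta)(x)$ (well-defined and $\calC_b^1$ for the same reason, using assumption (ii) of the Lemma together with uniqueness from Proposition~\ref{prop:exist_cont}). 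Set $R_t^\delta := (id+\delta\psi_t)\circ Q_t$, so that $(id+\delta\psi_t)_\#\mu_t = R_t^\delta\#\hat\mu$; using the coupling $(Q_t^\delta,R_t^\delta)\#\hat\mu$ one gets
\[
 W_2^2\bigl(\mu_t^\delta,(id+\delta\psi_t)_\#\mu_t\bigr) \;\le\; \int_{\R^d}|Q_t^\delta(x)-R_t^\delta(x)|^2\,\dd\hat\mu(x)\;=:\;E_t(\delta),
\]
so it suffices to prove $\sup_{t\in[0,T]}E_t(\delta)=o(\delta^2)$ as $\delta\to 0$.

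Next I would write the ODE satisfied by $R^\delta$. By the chain rule and $\dot Q_t(x)=v(\mu_t,\con_t)(Q_t(x))$,
\[
 \tfrac{\dd}{\dd t}R_t^\delta(x) = v(\mu_t,\con_t)(Q_t(x)) + \delta\bigl(\partial_t\psi_t(Q_t(x)) + D\psi_t(Q_t(x))\,v(\mu_t,\con_t)(Q_t(x))\bigr),
\]
and since \eqref{eq:linearization}, evaluated at the point $y=Q_t(x)$, holds for a.e.\ $t$ and $\hat\mu$-a.e.\ $x$ (the $\mu_t$-null exceptional set pulls back to a $\hat\mu$-null set because $\mu_t = Q_t\#\hat\mu$), the bracket equals $\calK(\mu_t,\con_t)[\psi_t,\h_t](Q_t(x))$. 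With $R_0^\delta(x)=x$ (from $\psi_0\equiv0$), subtracting from $\tfrac{\dd}{\dd t}Q_t^\delta(x)=v(\mu_t^\delta,\con_t^\delta)(Q_t^\delta(x))$ gives, for $e_t^\delta:=Q_t^\delta-R_t^\delta$ with $e_0^\delta=0$,
\[
 \dot e_t^\delta(x) = \underbrace{v(\mu_t^\delta,\con_t^\delta)(Q_t^\delta(x)) - v(\mu_t^\delta,\con_t^\delta)(R_t^\delta(x))}_{\mathrm{(I)}} + \underbrace{v(\mu_t^\delta,\con_t^\delta)(R_t^\delta(x)) - v(\nu_t^\delta,\con_t^\delta)(R_t^\delta(x))}_{\mathrm{(II)}} + \delta\,g_t^\delta(x),
\]
where $g_t^\delta(x):=\delta^{-1}\bigl(v(\nu_t^\delta,\con_t^\delta)\circ(id+\delta\psi_t)(Q_t(x)) - v(\mu_t,\con_t)(Q_t(x))\bigr) - \calK(\mu_t,\con_t)[\psi_t,\h_t](Q_t(x))$. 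Substituting $x\mapsto Q_t(x)$ in \eqref{eq:K_limit} gives $\epsilon(\delta)^2 := \int_0^T\!\int_{\R^d}|g_t^\delta(x)|^2\,\dd\hat\mu(x)\,\dd t \to 0$ as $\delta\to0$.

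Then I would estimate $E_t(\delta)$ by differentiating and pairing $\dot e_t^\delta(x)$ with $e_t^\delta(x)$: term (I) contributes at most $C_l|e_t^\delta(x)|^2$ by the one-sided Lipschitz inequality in \ref{A1}; term (II) is bounded in modulus by $C_v\,W_2(\mu_t^\delta,\nu_t^\delta)$ via \ref{A2}, which is $0$ if $\nu^\delta=\mu^\delta$ and otherwise (e.g.\ for $\nu_t^\delta=(id+\delta\psi_t)_\#\mu_t$) is $\le C_v\,E_t(\delta)^{1/2}$; the last term contributes $\delta|g_t^\delta(x)|\,|e_t^\delta(x)|$. Integrating against the probability measure $\hat\mu$ (so $\int|e_t^\delta|\,\dd\hat\mu\le E_t(\delta)^{1/2}$) and applying Cauchy--Schwarz and Young's inequality yields
\[
 \tfrac{\dd}{\dd t}E_t(\delta) \;\le\; (2C_l+2C_v+1)\,E_t(\delta) + \delta^2\!\int_{\R^d}|g_t^\delta(x)|^2\,\dd\hat\mu(x).
\]
Since $E_0(\delta)=0$, Grönwall's lemma gives $\sup_{t\in[0,T]}E_t(\delta)\le \delta^2 e^{(2C_l+2C_v+1)T}\epsilon(\delta)^2$, hence $W_2\bigl(\mu_t^\delta,(id+\delta\psi_t)_\#\mu_t\bigr)/\delta \le e^{(2C_l+2C_v+1)T/2}\,\epsilon(\delta) \to 0$ as $\delta\to0$, uniformly in $t$ — which is in fact a strengthening of \eqref{eq:vector-field}.

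I expect the main difficulties to be purely technical: (a) justifying the a.e.-in-$t$ differentiation of $t\mapsto R_t^\delta(x)$ and the evaluation of \eqref{eq:linearization} along the characteristics, given that $\calK$ is merely bounded Borel — one works with absolutely continuous representatives / Carathéodory solutions and exploits $\psi\in\calC_b^1$ together with the $\calC_b^1$-regularity of the flow (Remark~\ref{rem:A5}); and (b) the apparent circularity in term (II) when $\nu^\delta\neq\mu^\delta$, which is resolved by feeding the bound $W_2(\mu_t^\delta,(id+\delta\psi_t)_\#\mu_t)\le E_t(\delta)^{1/2}$ back into the Grönwall estimate rather than treating (II) as given data. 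Everything else is a routine dissipativity-plus-Grönwall computation.
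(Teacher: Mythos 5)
Your argument is correct, and it reaches the conclusion by a genuinely different (Lagrangian) route than the paper, which works in Eulerian variables. The paper also sets $\nu_t^\delta=(id+\delta\psi_t)_\#\mu_t$, but instead of pulling everything back to $\hat\mu$ along characteristics it first shows that $t\mapsto\nu_t^\delta$ is an absolutely continuous curve solving a continuity equation with velocity $b_t^\delta=[\delta\,\calK(\mu_t,\con_t)[\psi_t,\h_t]+v(\mu_t,\con_t)]\circ(id+\delta\psi_t)^{-1}$, and then differentiates $t\mapsto W_2^2(\mu_t^\delta,\nu_t^\delta)$ directly via the standard formula $\tfrac12\tfrac{d}{dt}W_2^2=\iint\langle x-y,\,v(\mu_t^\delta,\con_t^\delta)(x)-b_t^\delta(y)\rangle\,\dd\pi_t^\delta$ along the optimal plans $\pi_t^\delta$; the resulting splitting into a $(C_v+C_l)W_2^2$ term and a $W_2\cdot\bigl(\int|v(\nu_t^\delta,\con_t^\delta)\circ(id+\delta\psi_t)-v(\mu_t,\con_t)-\delta\calK|^2\dd\mu_t\bigr)^{1/2}$ term is the exact Eulerian counterpart of your $\mathrm{(I)}$/$\mathrm{(II)}$/$g_t^\delta$ decomposition, and both proofs close with Gr\"onwall and \eqref{eq:K_limit}. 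What your version buys is elementarity: you never need the absolute-continuity characterization of Wasserstein curves, the derivative formula for $W_2^2$ along two AC curves, or the inverse map $(id+\delta\psi_t)^{-1}$, because the explicit flow coupling $(Q_t^\delta,R_t^\delta)_\#\hat\mu$ already gives the needed upper bound on $W_2$ (and you correctly resolve the apparent circularity in $\mathrm{(II)}$ by bounding $W_2(\mu_t^\delta,\nu_t^\delta)\le E_t(\delta)^{1/2}$ and absorbing it into Gr\"onwall). What the paper's version buys is independence from the Lagrangian representation $\mu_t=Q_t\#\hat\mu$ and from the pointwise-a.e.\ chain-rule bookkeeping along characteristics that you flag in your point (a), which is precisely the technical debt your route incurs; the paper instead pays by verifying the hypotheses of the AGS differentiation theorem. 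Both yield the uniform-in-$t$ rate $o(\delta)$.
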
 
\begin{proof}
    For each $t\in[0,T]$, we set $\nu_t^\delta := (id + \delta\psi_t)_\#\mu_t$. We begin by showing that the curve $t\mapsto \nu_t^\delta\in\calP_2(\R^d)$ is absolutely continuous. Due to the assumed regularity on $\psi$ satisfying \eqref{eq:linearization}, the chain-rule applies, and we obtain for any $F\in \calC_c^\infty(\R^d)$, and almost every $t\in(0,T)$,
    \begin{align*}
 \frac{d}{dt}\int F\dd \nu_t^\delta &=  \frac{d}{dt}\int F\circ(id + \delta\psi_t) \dd \mu_t \\
 &= \int \langle(\nabla F)\circ (id + \delta\psi_t), \delta\partial_t\psi_t\rangle \dd \mu_t + \int \langle\nabla (F\circ (id + \delta\psi_t)),v(\mu_t,\con_t)\rangle\dd \mu_t \\
 &= \int \langle (\nabla F)\circ (id + \delta\psi_t), \delta\calK(\mu_t,\con_t)[\psi_t,\h_t] + v(\mu_t,\con_t)\rangle\dd \mu_t \\
 &= \int \langle \nabla F, [\delta \calK(\mu_t,\con_t)[\psi_t,\h_t] + v(\mu_t,\con_t)]\circ (id + \delta\psi_t)^{-1}\rangle \dd\nu_t^\delta =: \int \langle \nabla F, b_t^\delta\rangle \dd\nu_t^\delta\,.
 \end{align*}
Furthermore, by the assumption on $\calK$, we have that
\[
    \int |b_t^\delta|^2\dd\nu_t^\delta = \int |\delta \calK(\mu_t,\con_t)[\psi_t,\h_t] + v(\mu_t,\con_t)|^2 \dd\mu_t <\infty\qquad\text{for almost every $t\in(0,T)$}\,.
\]
Along with the previous computation, we find that $t\mapsto \nu_t^\delta \in \calP_2(\R^d)$ is an absolutely continuous curve satisfying the continuity equation
\[
    \partial_t \nu_t^\delta + \nabla\cdot(b_t^\delta \nu_t^\delta) = 0\qquad\text{in the sense of distributions.}
\]
Consequently, we can consider the temporal derivative of $t\mapsto W_2^2(\mu_t^\delta,\nu_t^\delta)$ to obtain
\begin{align*}
    \frac{1}{2}\frac{d}{dt} W_2^2(\mu_t^\delta,\nu_t^\delta) &= \iint \langle x-y, v(\mu_t^\delta,\con_t^\delta)(x) - b_t^\delta(y)\rangle \dd\pi_t^\delta \\
    &= \iint \langle x-y, v(\mu_t^\delta,\con_t^\delta)(x) - v(\nu_t^\delta,\con_t^\delta)(y)\rangle \dd\pi_t^\delta \\
    &\hspace{8em}+  \iint \langle x-y, v(\nu_t^\delta,\con_t^\delta)(y) - b_t^\delta(y)\rangle \dd\pi_t^\delta =: \mathrm{(I)} + \mathrm{(II)}.
\end{align*}
To estimate $\mathrm{(I)}$, we use assumptions (A1) and (A2) to obtain
\[
    \mathrm{(I)} \le (C_v + C_l) W_2^2(\mu_t^\delta,\nu_t^\delta).
\]
As for $\mathrm{(II)}$, we have 
\begin{align*}
    \mathrm{(II)} &\le W_2(\mu_t^\delta,\nu_t^\delta)\left( \int |v(\nu_t^\delta,\con_t^\delta)(y) - b_t^\delta(y)|^2 \dd\nu_t^\delta\right)^{1/2} \\
    &= W_2(\mu_t^\delta,\nu_t^\delta)\left( \int |v(\nu_t^\delta,\con_t^\delta)\circ(id + \delta\psi_t)(y) - v(\mu_t,\con_t)(y) - \delta \calK(\mu_t,\con_t)[\psi_t,\h_t](y)|^2 \dd\mu_t\right)^{1/2},
\end{align*}
which, together with the estimate for $\mathrm{(I)}$, gives
\begin{align*}
    \frac{d}{dt} W_2^2(\mu_t^\delta,\nu_t^\delta) \le CW_2^2(\mu_t^\delta,\nu_t^\delta) + \delta^2 \mathsf{e}_t^\delta,
\end{align*}
for some constant $C>0$, and where
\[
    \mathsf{e}_t^\delta :=  \int \left|\frac{v(\nu_t^\delta,\con_t^\delta)\circ(id + \delta\psi_t)(y) - v(\mu_t,\con_t)(y)}{\delta} -  \calK(\mu_t,\con_t)[\psi_t,\h_t](y)\right|^2 \dd\mu_t.
\]
Since $W_2(\mu_0^\delta,\nu_0^\delta)=0$, an application of Gronwall's inequality yields
\[
    \sup_{t\in[0,T]} \frac{W_2^2(\mu_t^\delta,\nu_t^\delta)}{\delta^2} \le e^{CT}\int_0^T \mathsf{e}_s^\delta\dd s \longrightarrow 0\qquad\text{as $\delta \to 0$},
\]
due to the assumption on $\calK$ in \eqref{eq:K_limit}, thereby concluding the proof.
\end{proof}

\begin{rem}\label{rem:K}
	We mention that for any ${\bf h}\in\calC_c^\infty((0,T),\R^{dM})$ and any sufficiently smooth mapping $F\colon \R^d\to\R^d$, we have for ${\bf x}^\delta = {\bf x} + \delta{\bf h}$:
	\[
	 F({x}^{\delta,m}) = F({ x}^{m}) + \delta (D F)({ x}^{m})[h^m] + O(\delta^2)\qquad \text{for\; $m=1,\dots,M$}.
	\]
 In particular, for the velocity field $v$ given in \eqref{mfV} one deduces
  \begin{align}\label{eq:rem:K}
  \calK(\mu,\con)[\psi,\h] &= Dv(\mu,\con)\psi + \int (D K_1)(\cdot -y)\psi(y)\dd  \mu(y) + \sum_{m=1}^M (DK_2)(\cdot-u^{m})\,h^m,
  \end{align}
  which satisfies
  \[
    \sup_{t\in(0,T)}\|\calK(\mu_t,\con_t)[\psi_t,\h_t]\|_\infty \le C\Bigl(\|\psi\|_{L^\infty((0,T)\times\R^d)} + \|\h\|_{L^\infty((0,T))}\Bigr).
  \]
  From assumption (A5), it is not difficult to see that \eqref{eq:K_limit} is satisfied.
\end{rem}

The existence of a $\psi\in \calC_b^{1}((0,T)\times \R^d)$ satisfying the assumptions of Lemma~\ref{lem:4.4} is provided in the following statement.
\begin{thm}
Let the assumptions of Lemma~\ref{lem:4.4} hold. For the velocity field $v: \calP_2(\R^d) \times \R^{dM} \to \Lip_{loc}(\R^d)$ given in \eqref{mfV} there exists $\psi \in \calC_b^{1}((0,T)\times \R^d)$ with $\psi_0 = 0$ satisfying
\begin{equation*}
  \partial_t\psi_t +  D\psi_t\,v(\mu_t,\con_t) = \calK(\mu_t,\con_t)[\psi_t,\h_t]\qquad \text{for $\mu_t\,dt$-almost every $(t,x)\in(0,T)\times\R^d$},
 \end{equation*}
 where $\calK$ is given in \eqref{eq:rem:K}.
\end{thm}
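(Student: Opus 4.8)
The plan is to construct $\psi$ by the method of characteristics applied to the transport equation \eqref{eq:linearization}, exploiting the flow $Q_t$ from \eqref{eq:W-Flow} whose existence and $\calC_b^1$-regularity are guaranteed by Remark~\ref{rem:A5}. First I would observe that along the characteristic curve $t\mapsto Q_t(x)$ the left-hand side of \eqref{eq:linearization} is precisely $\frac{d}{dt}\bigl(\psi_t(Q_t(x))\bigr)$, so that the PDE becomes the (nonlocal, linear) ODE
\[
 \frac{d}{dt}\bigl(\psi_t\circ Q_t\bigr)(x) = \calK(\mu_t,\con_t)\bigl[\psi_t,\h_t\bigr]\bigl(Q_t(x)\bigr), \qquad \psi_0\circ Q_0 = 0,
\]
with $\calK$ given explicitly by \eqref{eq:rem:K}. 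Integrating in time and using $\mu_t = Q_t\#\hat\mu$ turns this into a fixed-point equation for the function $\zeta_t := \psi_t\circ Q_t \in \calC([0,T]\times\R^d;\R^d)$, namely $\zeta_t(x) = \int_0^t \calK(\mu_s,\con_s)[\zeta_s\circ Q_s^{-1},\h_s](Q_s(x))\,ds$; note the term $\int (DK_1)(\cdot-y)\psi_s(y)\,d\mu_s(y)$ rewrites, via the push-forward, as $\int (DK_1)(Q_s(x)-Q_s(y))\,\zeta_s(y)\,d\hat\mu(y)$, which is genuinely linear and bounded in $\zeta_s$.

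Next I would set up this fixed point in the Banach space $\calC([0,T]\times\R^d;\R^d)$ with the norm $\|\zeta\| = \sup_{t}\sup_x |\zeta_t(x)|$ (or a Bielecki-type weighted-in-time variant to get a contraction directly). The map $\Phi[\zeta]_t(x) := \int_0^t \calK(\mu_s,\con_s)[\zeta_s\circ Q_s^{-1},\h_s](Q_s(x))\,ds$ is affine in $\zeta$; using \ref{A5} (so $K_1,K_2, DK_1, DK_2$ and $Dv(\mu,\con)$ are bounded, cf.\ Remark~\ref{rem:A5}) together with the bound on $\calK$ recorded in Remark~\ref{rem:K}, one gets $\|\Phi[\zeta] - \Phi[\tilde\zeta]\|_{\calC([0,t])} \le Ct\,\|\zeta-\tilde\zeta\|_{\calC([0,t])}$, hence a unique fixed point $\zeta$ on $[0,T]$ by the usual contraction/continuation argument (or in one step with the weighted norm). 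Boundedness of $\zeta$ follows from Gronwall applied to $\|\zeta_t\|_\infty \le \int_0^t C(\|\zeta_s\|_\infty + \|\h\|_\infty)\,ds$, giving $\|\zeta\|_\infty \le C_T\|\h\|_\infty$.

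Having obtained $\zeta$, I would define $\psi_t := \zeta_t\circ Q_t^{-1}$ and verify the claimed regularity and the PDE. Since $(t,x)\mapsto Q_t(x)$ and its spatial inverse are $\calC_b^1$ (Remark~\ref{rem:A5}), it remains to show $\zeta\in\calC_b^1([0,T]\times\R^d)$; this is where the main technical work lies. Time regularity is immediate from the integral formula: $\partial_t\zeta_t(x) = \calK(\mu_t,\con_t)[\psi_t,\h_t](Q_t(x))$, which is continuous and bounded. Spatial regularity requires differentiating $\Phi$ in $x$: one shows that $D_x\zeta$ solves the linear integral equation obtained by formally differentiating $\Phi[\zeta]$, whose coefficients involve $D^2K_1$, $DQ_s$, $D^2 v$ — and here I expect the genuine obstacle, since \ref{A5} only gives $K_1,K_2\in\calC_b^2$ and $v(\mu,\con)\in\calC_b^1$ in $x$ (Remark~\ref{rem:A5}), so $\calK$ as written in \eqref{eq:rem:K} is merely $\calC_b^0$ in $x$, not $\calC^1$. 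This suggests that the honest route is to prove $D_x\zeta$ exists as the fixed point of the differentiated integral operator using only that the data are Lipschitz (so $D_x\zeta$ is bounded but need not be continuous), and then upgrade "$\psi\in\calC_b^1$" to the weaker, and sufficient, statement that $\psi$ is Lipschitz with bounded distributional derivatives and \eqref{eq:linearization} holds $\mu_t\,dt$-a.e.; alternatively one invokes that only the values of $\psi$ on $\operatorname{supp}\mu_t$ matter and uses the $\calC^1$-flow to transport regularity. I would flag this point explicitly, carry out the contraction argument for $\zeta$ and its spatial derivative carefully, and note that \eqref{eq:K_limit} for this $\psi$ is exactly the content of Remark~\ref{rem:K}, so the hypotheses of Lemma~\ref{lem:4.4} are met.
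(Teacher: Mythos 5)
Your construction is essentially the paper's: both set up a Banach fixed-point argument for the integral-along-characteristics form of \eqref{eq:linearization}, with a Bielecki-type exponentially weighted norm, the only cosmetic difference being that you pull back to Lagrangian coordinates $\zeta_t=\psi_t\circ Q_t$ while the paper works directly with $\psi_t(x)=\int_0^t \calK(\mu_s,\con_s)[\psi_s,\h_s](Q_s(x,t))\,\dd s$ using the two-time flow $Q_s(\cdot,t)$; by the flow property these fixed-point equations are the same. The one place where you diverge is the regularity hedge at the end, and it is unnecessary: for the specific velocity field \eqref{mfV}, assumption \ref{A5} gives not just $v(\mu,\con)\in\calC_b^1$ (which is all Remark~\ref{rem:A5} bothers to record) but in fact $v(\mu,\con)\in\calC_b^2(\R^d)$ with $\|D^2v(\mu,\con)\|_{\sup}\le \|D^2K_1\|_{\sup}+M\|D^2K_2\|_{\sup}$, since $v$ is a convolution of a $\calC_b^2$ kernel with a probability measure plus finitely many translates of a $\calC_b^2$ kernel. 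Consequently $D\calK(\mu,\con)[\omega,\h]$ is controlled by $\|D^2v\|_{\sup}$, $\|Dv\|_{\sup}$, $\|D^2K_1\|_{\sup}$ acting on $\|\omega\|_{\sup}+\|D\omega\|_{\sup}$, and the contraction closes in $\Gamma=\calC([0,T],\calC_b^1(\R^d,\R^d))$ with the norm $\|\omega\|_{\exp}$ including the spatial derivative — which is exactly how the paper proceeds. So you can drop the fallback to Lipschitz-with-bounded-distributional-derivatives and run your differentiated integral equation as a genuine contraction; everything else, including the observation that \eqref{eq:K_limit} is the content of Remark~\ref{rem:K}, matches the paper.
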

\begin{proof}
We consider $\Gamma = \calC([0,T],\calC_b^1(\R^d,\R^d))$  and the operator
\[
\Gamma\ni\omega \mapsto H(\omega) \quad\text{with}\quad H(\omega)(t,x) = \int_0^t \calK(\mu_s, \con_s) [\omega_s,\h_s] (Q_s(x,t))\, ds. 
\]
 First, we have to show that $H(\omega) \in \Gamma.$ Due to (15), (A5) and the properties of the flow discussed in Remark~\ref{rem:A5}, we have $DH(\omega)(t) \in \calC_b(\R^d)$ and continuous w.r.t.~$t$. Therefore, it holds $H(\omega) \in \calC([0,T],\calC_b^1(\R^d)).$ In particular, $H \colon \Gamma \rightarrow \Gamma$ is well-defined. 

To establish the contraction property of $H,$ we equip $\calC([0,T],\calC_b^1(\R^d,\R^d))$ with the weighted norm
$$ \| \omega \|_{\text{exp}} := \max\limits_{t \in [0,T]} \Big\{ e^{-\lambda} \big( \| \omega(t) \|_\text{sup} + \| D\omega(t) \|_\text{sup} \big) \Big\}
$$
for some $\lambda > 0$ to be specified below.
Note that $(\calC([0,T],\calC_b^1(\R^d,\R^d)), \| \cdot \|_{\text{exp}})$ is complete. 

Using the structure of $\calK$ in Remark~\ref{rem:K}, we obtain
\begin{align*}
    |H(\omega^1) - H(\omega^2)|(t,x) &\le \int_0^t |\calK(\mu_s,\con_s)[\omega_s^1 - \omega_s^2, \h_s] (Q_s(x,t))|\, ds \\
    &\le \int_0^t (\| Dv \|_{\sup} + \| DK_1 \|_{\sup}) \| \omega_s^1 - \omega_s^2\| ds.
\end{align*}
As for the space derivative we obtain
\begin{align*}
    |D H(\omega^1) - &D H(\omega^2)|(t,x) \le \int_0^t |D \calK(\mu_s,\con_s)[\omega^1_s-\omega^2_s ,\h_s](Q_s(x,t))| |DQ_s(x,t)| ds \\
    &\le \int_0^t \Bigl(\| D^2v \|_{\sup} + \| Dv\|_{\sup} + \| D^2K_1 \|_{\sup}\Bigr) \Bigl(\| \omega_s^1 - \omega_s^2 \|_\text{sup} + \| D\omega_s^1 - D\omega_s^2 \|_\text{sup}\Bigr)\, ds.
\end{align*}
We define $C_v = 2\| Dv \|_\infty + \| DK_1 \|_\infty + \| D^2v \|_\infty  + \| D^2K_1 \|_\infty$ and add the two inequalities to obtain
\begin{align*}
    &|H(\omega^1) - H(\omega^2)|(t,x) + |D H(\omega^1) - D H(\omega^2)|(t,x) \\ 
    &\qquad\le \int_0^t C_v \Bigr(\| \omega_s^1 - \omega_s^2 \|_\text{sup} + \| D\omega_s^1 - D\omega_s^2 \|_\text{sup}\Bigl)\, ds
    \le \frac{C_v}{\lambda} e^{\lambda t} \| \omega_1 - \omega_2\|_\text{exp} \,.
\end{align*}
Multiplying each of the above estimates with $e^{-\lambda}$ and taking the  supremum over $t$ and $x$ leads to
\begin{equation*}
\| H(\omega^1) - H(\omega^2) \|_\text{exp} \le \frac{C_v}{\lambda} \|\omega^1 - \omega^2 \|_\text{exp}.  
\end{equation*}
Choosing $\lambda > C_v$ allows us to conclude the contraction property of $H.$ An application of the Banach fixed-point theorem yields a solution $\psi \in \calC([0,T],\calC_b^1(\R^d,\R^d))$ given by
\[
    \psi_t(x) = \int_0^t \calK(\mu_s,\con_s)[\psi_s,\h_s](Q_s(x,t)) \dd s.
\]
It is straight forward to see that
\[
    \Gamma\cap \calC^1((0,T), \calC_b(\R^d,\R^d)) \hookrightarrow \calC_b^1((0,T)\times\R^d,\R^d).
\]
Finally, a direct computation shows that $\psi$ satisfies the evolution equation.
\end{proof}

Now, we are able to state the first-order necessary condition for $(\mu,\con)$ to be a stationary point.
\begin{thm}\label{thm:optW}
 Let $(\bar\mu,\bar\con)$ be an optimal pair, $J_2$ be G\^ateaux-differentiable, and $J_1$ a cylindrical function of the form given in \ref{A3}. Then, for any $\h \in\calC_c^\infty((0,T),\R^{dM})$ it holds:
 \begin{align}\label{eq:optimality}
dJ_2(\bar\con)[\h] + \int_0^T \!\!\int \langle \delta_\mu J_1(\bar\mu_t), \psi_t\rangle\dd \bar\mu_t\dd t = 0,
 \end{align}
 where
 \begin{align}\label{eq:deltaJ_1}
   \delta_\mu J_1(\mu)(x) := \sum_{\ell=1}^L(\partial_\ell j)(\langle g_1,\mu \rangle,\ldots,\langle g_L,\mu\rangle)(\nabla g_\ell)(x),
 \end{align}
 and $t\mapsto\psi_t\in L^2(\mu_t,\R^d)$ satisfying \eqref{eq:linearization} with initial condition $\psi_0=0$. 
\end{thm}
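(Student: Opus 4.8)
The plan is to compute the directional derivative of the reduced functional $\calG = J_2 + \chi$ at the optimal control $\bar\con$ along an arbitrary direction $\h\in\calC_c^\infty((0,T),\R^{dM})$ and use the fact that it must vanish at an optimum. From the computation preceding Lemma~\ref{lem:4.4}, we already know that
\[
 \lim_{\delta\to 0}\frac{\calG(\con^\delta)-\calG(\bar\con)}{\delta} = \lim_{\delta\to 0}\frac{J_1(\mu^\delta)-J_1(\bar\mu)}{\delta} + dJ_2(\bar\con)[\h],
\]
so everything reduces to differentiating $\delta\mapsto J_1(\mu^\delta)$ at $\delta=0$. First I would invoke the preceding theorem (existence of $\psi\in\calC_b^1((0,T)\times\R^d)$ with $\psi_0=0$ solving the linearized equation \eqref{eq:linearization} with $\calK$ given by \eqref{eq:rem:K}) together with Remark~\ref{rem:K} (which verifies the limit condition \eqref{eq:K_limit}), so that Lemma~\ref{lem:4.4} applies and gives $W_2(\mu_t^\delta,(id+\delta\psi_t)_\#\mu_t) = o(\delta)$ uniformly in $t$.

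Next I would split the difference quotient as
\[
 \frac{J_1(\mu_t^\delta)-J_1(\bar\mu_t)}{\delta} = \frac{J_1(\mu_t^\delta)-J_1((id+\delta\psi_t)_\#\bar\mu_t)}{\delta} + \frac{J_1((id+\delta\psi_t)_\#\bar\mu_t)-J_1(\bar\mu_t)}{\delta}.
\]
The first term is controlled by the Lipschitz estimate in Lemma~\ref{lem:ass3}: the hypotheses of that lemma hold because $\bar\mu$, $\mu^\delta$ and the pushforwards all have uniformly bounded second moments (using $\|\psi\|_\infty<\infty$ and Lemma~\ref{lem:forwardDobrushin}) and bounded $\langle g_\ell,\cdot\rangle$ (from assumption \ref{A3} together with the growth bound on $\nabla g_\ell$), so this term is bounded by $C_j\,W_2(\mu_t^\delta,(id+\delta\psi_t)_\#\bar\mu_t)/\delta \to 0$. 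For the second term, since $J_1(\mu)=j(\langle g_1,\mu\rangle,\ldots,\langle g_L,\mu\rangle)$ with $j\in\calC^1$ and $g_\ell\in\calC^1$, a chain-rule/Taylor expansion gives
\[
 \langle g_\ell,(id+\delta\psi_t)_\#\bar\mu_t\rangle = \langle g_\ell,\bar\mu_t\rangle + \delta\int\langle\nabla g_\ell,\psi_t\rangle\dd\bar\mu_t + o(\delta),
\]
so that
\[
 \frac{J_1((id+\delta\psi_t)_\#\bar\mu_t)-J_1(\bar\mu_t)}{\delta} \longrightarrow \sum_{\ell=1}^L(\partial_\ell j)(\langle g_1,\bar\mu_t\rangle,\ldots,\langle g_L,\bar\mu_t\rangle)\int\langle\nabla g_\ell,\psi_t\rangle\dd\bar\mu_t = \int\langle\delta_\mu J_1(\bar\mu_t),\psi_t\rangle\dd\bar\mu_t,
\]
which is exactly the integrand in \eqref{eq:optimality} with $\delta_\mu J_1$ as in \eqref{eq:deltaJ_1}. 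Integrating over $t\in[0,T]$ and passing the limit inside the integral — justified by a uniform-in-$\delta$ bound on the difference quotients coming from $\|\psi\|_\infty<\infty$, the local Lipschitz bounds on $j$ on the (bounded) range of the moments, and dominated convergence — yields $\lim_{\delta\to0}(J_1(\mu^\delta)-J_1(\bar\mu))/\delta = \int_0^T\!\!\int\langle\delta_\mu J_1(\bar\mu_t),\psi_t\rangle\dd\bar\mu_t\dd t$. Combining with the $J_2$ term and using $\calG'(\bar\con)[\h]=0$ gives \eqref{eq:optimality}.

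The main obstacle I anticipate is the rigorous justification of the two limit-interchange steps: passing from the pointwise-in-$t$ convergence of the difference quotients to convergence of the time-integral, and controlling the error term $\langle g_\ell,(id+\delta\psi_t)_\#\bar\mu_t\rangle - \langle g_\ell,\bar\mu_t\rangle - \delta\int\langle\nabla g_\ell,\psi_t\rangle\dd\bar\mu_t = o(\delta)$ uniformly in $t$ despite the unbounded domain $\R^d$. The latter requires using the growth condition $|\nabla g_\ell|(x)\le C_g(1+|x|)$ from \ref{A3} together with a second-order Taylor remainder in terms of $D^2 g_\ell$ or, more carefully, a mean-value argument that only uses $\nabla g_\ell\in\calC^0$ with controlled growth plus the uniform bound on $\mathfrak m_2(\bar\mu_t)$ and on $\|\psi_t\|_{L^2(\bar\mu_t)}$; dominated convergence with respect to $\bar\mu_t$ then closes the estimate. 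Everything else is a routine assembly of the auxiliary lemmas already established in the paper.
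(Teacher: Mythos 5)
Your proposal is correct and follows essentially the same route as the paper: Lemma~\ref{lem:4.4} plus the Lipschitz estimate of Lemma~\ref{lem:ass3} to replace $\mu_t^\delta$ by $(id+\delta\psi_t)_\#\bar\mu_t$ up to $o(\delta)$, a Taylor expansion of the cylindrical functional to produce $\int\langle\delta_\mu J_1(\bar\mu_t),\psi_t\rangle\dd\bar\mu_t$, and first-order optimality of $\bar\con$ to conclude. The only cosmetic difference is that the paper obtains the one-sided inequality for $\delta\ge 0$ and then flips the sign of $\h$ (hence of $\psi$) to get equality, whereas you assert the vanishing of the derivative directly; your version supplies more detail on the limit interchanges that the paper leaves implicit.
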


\begin{proof}
Since $J_1(\mu)$ is a cylindrical function, we have that
 \begin{align*}
 J_1(\mu_t^\delta)-J_1(\bar\mu_t) &= J_1((id+\delta\psi_t)_\#\bar\mu_t)-J_1(\bar\mu_t) + o(\delta)\\
 &= \delta\int \sum_{\ell=1}^L(\partial_\ell j)(\langle g_1,\bar\mu_t\rangle,\ldots,\langle g_L,\bar\mu_t\rangle)\langle\nabla g_\ell, \psi_t\rangle\dd \bar\mu_t + o(\delta),
 \end{align*}
 where $\psi$ satisfies \eqref{eq:linearization} with $\psi_0=0$. Therefore, owing to the minimality of $\bar\con$, we find
 \begin{align*}
 0\le \frac{\calG(\con^\delta)-\calG(\bar\con)}{\delta} &= dJ_2(\bar\con)[{\h}] 
 + \int_0^T \!\!\int \langle \delta_\mu J_1(\bar\mu_t), \psi_t\rangle\dd \bar\mu_t \dd t + O(\delta).
 \end{align*}
 Passing to the limit $\delta\to 0+$ yields
 \[
 0\le dJ_2(\bar\con)[{\h}] + \int_0^T \!\!\int \langle \delta_\mu J_1(\bar\mu_t), \psi_t\rangle\dd \bar\mu_t \dd t,
 \]
 for any ${\h}\in\calC_c^\infty((0,T),\R^{dM})$. Notice, however, that changing the sign of ${\h}$ leads to a change of sign of $\psi$, which then provides the equality \eqref{eq:optimality}.
\end{proof}

In order to provide an adjoint-based first-order optimality system, we now derive the equation for the dual variable. We consider the dual problem corresponding to \eqref{eq:linearization}, by testing the equation \eqref{eq:linearization} with a family of vector-valued measures $(m_t)_{t\in(0,T)}$ to obtain
\begin{align*}
 \int_0^T \!\!\int \Bigl(\partial_t\psi_t + D\psi_t\,v(\bar\mu_t,\bar\con_t)  - \calK(\bar\mu_t,\bar\con_t)[\psi_t,h_t]\Bigr)\cdot \dd m_t \dd t = 0,
\end{align*}
and set $\calK(\mu,\con)[\psi,h] = \calK^1(\mu,\con)[\psi] + \calK^2(\con)[\h]$, where
\begin{align*}
\calK^1(\mu,\con)[\psi] &:= D v(\mu,\con)\psi +  \int (D K_1)(\cdot -y)\psi(y)\dd \mu(y),\\
\calK^2(\con)[\h] &:= \sum\nolimits_{\ell} (DK_2)(\cdot-u^{\ell})\,h^{\ell}.
\end{align*}
Using $\psi_0=0$ and integrating by parts, we obtain
\begin{align*}
  &\int_0^T \int \langle \partial_tm_t + \nabla\cdot\bigl(v(\bar\mu_t,\bar\con_t)\otimes m_t\bigr) + \calK^{1,*}(\bar\mu_t,\bar\con_t)[m_t],\psi_t\rangle\dd t \\
 &\hspace*{16em}=\int \psi_T\cdot d m_T - \int_0^T \int \calK^2(\bar\con_t)[{\h_t}]\cdot \dd m_t\dd t,
\end{align*}
where
\begin{align}\label{eq:K1_dual}
\calK^{1,*}(\mu,\con)[m] = \nabla v(\mu,\con)\, m + \mu\int  (\nabla K_1)(y-\cdot)\dd m(y).
\end{align}  
By choosing $\bar m$ to satisfy the dual problem
\begin{align}\label{eq:adjoint}
 \partial_t \bar m_t + \nabla\cdot\bigl(v(\bar\mu_t,\bar\con_t)\otimes \bar m_t\bigr) + \calK^{1,*}(\bar\mu_t,\bar\con_t)[\bar m_t] = \bar\mu_t\delta_\mu J_1(\bar\mu_t),
\end{align}
subject to the terminal condition $\bar m_T = 0,$ 
we find with the help of the optimality condition~\eqref{eq:optimality}, that
\begin{equation}\label{eq:optconditionPDE}
 dJ_2( \bar\con)[\h] - \int_0^T \int \calK^2(\bar\con_t)[\h_t] \cdot \dd\bar m_t\dd t = 0\qquad \text{for all ${\h}\in\calC_c^\infty((0,T),\R^{dM})$}.
\end{equation}

\begin{rem}\label{rem:W}
	If $|\bar m_t|\ll \bar\mu_t$ for every $t\in[0,T]$, i.e.\ there is a vector field $\bar\xi_t:\R^d\to\R^d$ such that $\bar m_t = \bar \xi_t\bar\mu_t$, where $\bar\mu$ satisfies \eqref{eq:PDE}, then equation \eqref{eq:adjoint} formally reduces to
	\begin{align}\label{eq:xiW}
	 \partial_t \bar\xi_t + D\bar\xi_t\,v(\bar\mu_t,\bar\con_t) = - \nabla v(\bar\mu_t,\bar\con_t)\,\bar\xi_t - \int (\nabla K_1)(y-\cdot)\,\bar\xi_t(y)\dd  \bar\mu_t(y) + \delta_\mu J_1(\bar\mu_t).
	\end{align}
\end{rem}

\begin{rem}\label{rem:adjoint_potential}
	If we further assume that $K_1$ and $K_2$ are gradients of potential fields, then $\nabla K_1$ and $\nabla K_2$ are symmetric and the previous equation takes the simpler form
	\[
		\partial_t \bar\xi_t + \nabla( v(\bar\mu_t,\bar\con_t)\cdot \bar\xi_t) = -\int \bar\xi_t(y)\cdot(\nabla K_1)(y-\cdot)\dd  \bar\mu_t(y) + \delta_\mu J_1(\bar\mu_t).
	\]
	In this case, one can expect $\bar\xi$ to be a gradient of a potential field (compare also the results in \cite{herty18}), i.e.\ $\bar\xi=\nabla\bar\phi$ for a function $\bar\phi$ satisfying the scalar equation
	\begin{equation}\label{eq:compL2adjoint}
		\partial_t \bar\phi_t + v(\bar\mu_t,\bar\con_t)\cdot \nabla \bar\phi_t = \int \nabla\bar\phi_t(y)\cdot K_1(y-\cdot)\dd  \mu_t(y) + \sum_{i=1}^L(\partial_i j)(\langle g_1,\mu_t \rangle,\ldots,\langle g_L,\mu_t\rangle) g_i.
	\end{equation}
\end{rem}

\subsection{Well-posedness of the adjoint equation}
To obtain the well-posedness of the adjoint equation \eqref{eq:adjoint} we make use of equation \eqref{eq:xiW}. Indeed, due to assumptions \ref{A3} and \ref{A5}, we can make use of the method of characteristics and Banach's fixed-point theorem.

\begin{thm}
	 Let assumptions \ref{A1}-\ref{A5} hold and $(\mu,\con)$ be admissible with initial condition $\hat{\mu}\in\calP(\R^d)$ having compact support. Then, the equation
	 \[
	  \partial_t \xi_t + D\xi_t\,v(\mu_t,\con_t) = \Psi(\mu_t,\con_t)[\xi_t],\qquad \xi_T= p\in \calC_b(\R^d,\R^d),
	 \]
	 with
	 \begin{align}\label{eq:Psi}
	  \Psi(\mu,\con)[\xi] = - \nabla v(\mu,\con)\,\xi - \int (\nabla K_1)(y-\cdot)\,\xi(y)\dd \mu(y) + \delta_\mu J_1(\mu)
	 \end{align}
	 has a unique solution $\xi\in \calC([0,T]\times \R^d,\R^d)$ with the representation
	 \begin{equation}\label{eq:CharPhi}
	 	\xi_t(x) = p(Q_T(x,t)) - \int_t^T \Psi(\mu_s,\con_s)[\xi_s](Q_s(x,t))\dd s,
	 \end{equation}
	 where $Q$ satisfies \eqref{eq:W-Flow}. In particular, $m = \xi\mu$ yields a distributional solution of \eqref{eq:adjoint}.
\end{thm}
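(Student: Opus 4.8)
The plan is to run the fixed‑point argument used above for the linearized equation \eqref{eq:linearization} backward in time along the characteristic flow $Q$ of \eqref{eq:W-Flow}. The representation \eqref{eq:CharPhi} is precisely the Duhamel formula obtained by integrating the transport equation $\partial_t\xi_t+D\xi_t\,v(\mu_t,\con_t)=\Psi(\mu_t,\con_t)[\xi_t]$ (which is exactly \eqref{eq:xiW}) along characteristics, so it suffices to show that the map
\[
 (\mathcal{T}\xi)_t(x) := p\bigl(Q_T(x,t)\bigr) - \int_t^T \Psi(\mu_s,\con_s)[\xi_s]\bigl(Q_s(x,t)\bigr)\dd s
\]
has a unique fixed point in a suitable complete function space, and then to identify $m=\xi\mu$ as a distributional solution of \eqref{eq:adjoint}. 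The one feature absent from the earlier argument is that, by the growth bound on $|\nabla g_\ell|$ in \ref{A3}, the inhomogeneity $\delta_\mu J_1(\mu_t)$ in \eqref{eq:Psi} grows linearly in $x$, so $\xi$ cannot be bounded and one must work with vector fields of at most linear growth.

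First I would record two preliminary facts. By Remark~\ref{rem:A5}, under \ref{A5} one has $\|v(\mu_t,\con_t)\|_\infty+\|Dv(\mu_t,\con_t)\|_\infty\le K_v$ uniformly and $(t,s,x)\mapsto Q_s(x,t)$ is continuous with $|Q_s(x,t)-x|\le K_vT$, hence $1+|Q_s(x,t)|\le(1+K_vT)(1+|x|)$. Moreover, by admissibility $\mu_t=Q_t(\cdot,0)\#\hat\mu$, and since $\hat\mu$ has compact support all $\mu_t$ are supported in the fixed compact set $\Omega:=\{x:\operatorname{dist}(x,\operatorname{supp}\hat\mu)\le K_vT\}$; consequently $t\mapsto\langle g_\ell,\mu_t\rangle$ is bounded, so the coefficients $(\partial_\ell j)(\langle g_1,\mu_t\rangle,\dots,\langle g_L,\mu_t\rangle)$ in \eqref{eq:deltaJ_1} are uniformly bounded, and the nonlocal term $\int(\nabla K_1)(y-\cdot)\,\xi(y)\dd\mu_t(y)$ only involves $\xi$ on $\Omega$. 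Together with $p\in\calC_b$ and $K_1\in\calC_b^2$ this yields a pointwise bound $|\Psi(\mu_s,\con_s)[\xi_s](x')|\le C\,(1+|x'|)(1+\|\xi\|_X)$, where $\|\xi\|_X:=\sup_{t,x}|\xi_t(x)|/(1+|x|)$.

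I would then take $X:=\{\xi\in\calC([0,T]\times\R^d,\R^d):\|\xi\|_X<\infty\}$ equipped with the equivalent weighted norm $\|\xi\|_\lambda:=\sup_t e^{-\lambda(T-t)}\sup_x|\xi_t(x)|/(1+|x|)$, which is complete, and check that $\mathcal{T}:X\to X$: linear growth follows from the bound above combined with $1+|Q_s(x,t)|\le(1+K_vT)(1+|x|)$, and joint continuity of $\mathcal{T}\xi$ from continuity of $Q$, of $s\mapsto\mu_s$ in $W_2$ (Proposition~\ref{prop:exist_cont}) and of $p,\nabla v,\nabla K_1,\nabla g_\ell$ via dominated convergence. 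For the contraction, the crucial point is that the $\delta_\mu J_1$ term is independent of $\xi$ and cancels in $\Psi(\mu_s,\con_s)[\xi^1_s]-\Psi(\mu_s,\con_s)[\xi^2_s]$, leaving only $-\nabla v\,(\xi^1_s-\xi^2_s)$ and the nonlocal piece, whence $|\Psi[\xi^1_s]-\Psi[\xi^2_s](x')|\le C(1+|x'|)\sup_y|\xi^1_s(y)-\xi^2_s(y)|/(1+|y|)$; integrating from $t$ to $T$ and inserting the weight gives $\|\mathcal{T}\xi^1-\mathcal{T}\xi^2\|_\lambda\le\frac{C(1+K_vT)}{\lambda}\|\xi^1-\xi^2\|_\lambda$, so choosing $\lambda$ large makes $\mathcal{T}$ a contraction. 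Banach's fixed‑point theorem then produces a unique $\xi\in X$ solving \eqref{eq:CharPhi}; since for each $x$ the curve $s\mapsto\xi_s(Q_s(x,t))=p(Q_T(x,t))-\int_s^T\Psi(\mu_r,\con_r)[\xi_r](Q_r(x,t))\dd r$ is $\calC^1$ in $s$, this $\xi$ solves the transport equation along characteristics with $\xi_T=p$ and lies in $\calC([0,T]\times\R^d,\R^d)$.

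It remains to verify that $m_t:=\xi_t\mu_t$ solves \eqref{eq:adjoint} distributionally, with terminal datum $m_T=p\,\mu_T$ (hence $m_T=0$ when $p=0$, recovering the adjoint). Testing \eqref{eq:adjoint} componentwise against $\varphi\in\calC_c^1([0,T]\times\R^d,\R^d)$ and integrating the transport part by parts leads one to compute $\tfrac{d}{dt}\langle\varphi_t\cdot\xi_t,\mu_t\rangle$; writing this as $\tfrac{d}{dt}\int\varphi_t(Q_t(x))\cdot\xi_t(Q_t(x))\dd\hat\mu(x)$ it may be differentiated even though $\xi$ need not be $x$‑differentiable, because $t\mapsto\xi_t(Q_t(x))$ is $\calC^1$ by the representation, and the characteristic ODE replaces $\partial_t\xi_t+D\xi_t v$ by $\Psi(\mu_t,\con_t)[\xi_t]$. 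Expanding $\Psi$ via \eqref{eq:Psi} and recognising $\nabla v\,\xi_t\mu_t+\mu_t\int(\nabla K_1)(y-\cdot)\xi_t(y)\dd\mu_t(y)=\calK^{1,*}(\mu_t,\con_t)[\xi_t\mu_t]$ from \eqref{eq:K1_dual}, together with $\mu_t\,\delta_\mu J_1(\mu_t)$, reproduces exactly the weak form of \eqref{eq:adjoint}, the $t=T$ boundary term giving $m_T=p\,\mu_T$. The main obstacle, and the only genuinely new point relative to the earlier fixed‑point proof, is this first step: setting up the linear‑growth space so that $\mathcal{T}$ is a self‑map in spite of the linear inhomogeneity allowed by \ref{A3}, and then reconciling the mere continuity of $\xi$ with the distributional identity for $m=\xi\mu$ by differentiating along characteristics rather than in $x$.
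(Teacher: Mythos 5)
Your proposal is correct and follows the same core strategy as the paper's proof: represent the solution by the Duhamel formula along the characteristics $Q_\cdot(\cdot,t)$ and run a Banach fixed-point argument in a space of continuous vector fields equipped with an exponentially weighted norm in time, so that the $\xi$-independent term $\delta_\mu J_1$ drops out of the contraction estimate. Two points distinguish your write-up. First, you work in the weighted space $\|\xi\|_X=\sup_{t,x}|\xi_t(x)|/(1+|x|)$ to accommodate the linear growth of $\delta_\mu J_1$ permitted by \ref{A3}; the paper instead takes $\Gamma=\calC([0,T]\times\R^d,\R^d)$ with the plain sup-norm $\|\omega_t\|_{\sup}$, which strictly speaking is only finite for bounded fields and does not obviously make $H$ a self-map when $\nabla g_\ell$ is unbounded --- your version is the more careful one (the paper's argument is fine on the compact set $\Omega$ containing the support of all $\mu_t$, but the solution is claimed on all of $\R^d$). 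Second, you actually carry out the verification that $m=\xi\mu$ is a distributional solution of \eqref{eq:adjoint} by differentiating $t\mapsto\int\varphi_t(Q_t(x))\cdot\xi_t(Q_t(x))\dd\hat\mu(x)$ and identifying $\calK^{1,*}$ from \eqref{eq:K1_dual}; the paper states this ``in particular'' without proof. Neither difference changes the substance of the argument, and both of your additions are sound.
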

\begin{proof}
	We begin by recalling that the Lagrangian flow satisfies 
	\begin{gather*}
		Q_\cdot(\cdot,t)\in \calC(\R^d\times [t,T],\R^d)\qquad\text{for every\, $t\in[0,T)$},\\
		\exists\, \Omega\subset\R^d\;\text{compact}:\quad Q_s(x,t)\in \Omega\qquad\text{for all\, $t\in[0,T)$, $s\in[t,T]$ and $x\in\text{supp}(\hat\mu)$}.
	\end{gather*}
 For any $\omega \in \Gamma:=\calC([0,T]\times\R^d,\R^d)$, we define the operator
 \[
  H(\omega)(t,x):= p(Q_T(x,t)) - \int_t^T \Psi(\mu_s,\con_s)[\omega_s](Q_s(x,t))\dd s.
 \]
 Observe that $H(\omega)\in \Gamma$ due to the properties of the Lagrangian flow, and the fact that $p\in \calC_b(\Omega)$ and $K_1, K_2\in \calC_b^1(\R^d)$ by assumption \ref{A5}. In particular, $H\colon \Gamma\to \Gamma$ is a well-defined mapping.
 
 To show that $H$ is  a contraction on $\Gamma$, we first define a norm on $\Gamma$ given by
 \[
 	\norm{\omega}_{\exp} := \sup \big\{ e^{-4c_K (T-t)} \|\omega_t\|_{\sup} \; \colon \; t \in (0,T) \big\},
 \]
 where $c_K = \norm{DK_1}_{\sup} +  \norm{DK_2}_{\sup}.$ We note that $(\Gamma, \norm{\cdot}_{\exp})$ is complete and the estimate
 \begin{align*}
	|H(\omega^1) - H(\omega^2)|(t,x) &\le \int_t^T |\nabla v(\mu_s,\con_s)|(Q_s(x,t))|\omega^1_s - \omega^2_s|(Q_s(x,t))\dd s \\
	&\hspace*{4em} + \int_t^T \int_{\R^d} (\nabla K_1)(y-Q_s(x,t))\,|\omega_s^1 - \omega_s^2|(y)\dd \mu_s(y)\dd s \\
	&\le 2c_K\|\omega^1 - \omega^2\|_{\exp}\int_t^Te^{4c_K(T-s)}\dd s \\
	&= (1/2)\|\omega^1 - \omega^2\|_{\exp}(e^{4c_K(T-t)}-1)
 \end{align*}
 holds true for any $\omega^1$, $\omega^2\in\Gamma$. Taking the supremum over $x\in\R^d$ in the inequality above, multiplying with $e^{-4c_K(T-t)}$ and then taking the supremum over $t\in[0,T]$ yields
 \[
  \|H(\omega^1) - H(\omega^2)\|_{\exp} \le (1/2)\|\omega^1 - \omega^2\|_{\exp}.
 \]
 Therefore, the Banach fixed-point theorem provides a unique $\xi\in \Gamma$ satisfying \eqref{eq:CharPhi}.
\end{proof}

Summarizing the above computations, we end up at the following result.
\begin{thm}\label{eq:opt_W}
 A minimizing pair $(\bar\mu,\bar\con)$ of the problem \eqref{OCMF} satisfies
 \begin{align*}
 \partial_t \bar\mu_t + \nabla\cdot(\bar\mu_t\, v(\bar\mu_t, \bar\con_t)) &= 0,  \\
   \delta_{\bar\con} J_2(\bar\con) &= \frac{1}{\lambda} \int_{\R^d} (\nabla K_2)(x-\bar u_t^{\ell})\dd  \bar m_t(x),
 \end{align*}
 where the adjoint variable $\bar m$ satisfies
\begin{align*}
 \partial_t\bar m_t + \nabla\cdot(v(\bar\mu_t,\bar\con_t)\otimes \bar m_t) &=  -\nabla v(\bar\mu_t,\bar \con_t)\bar m_t  - \bar\mu_t \int_{\R^d} (\nabla K_1)(y-x)\dd  \bar m_t(y)\\
&\hspace*{8em}+\bar\mu_t\sum_{i=1}^k (\partial_i j)(\langle g_1 ,  \bar\mu_t \rangle, \dots, \langle g_k,  \bar\mu_t \rangle) \nabla g_i
 \end{align*}
 subject to the conditions 
 \[
  \bar\mu_t|_{t=0}=\hat\mu,\qquad \bar m_t|_{t=T} = 0,\qquad  \bar\con_t|_{t=0}= \hat\con,\qquad \left. \frac{d \bar\con_t}{dt}\right|_{t=T}=0.
\]
\end{thm}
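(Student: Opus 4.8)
The plan is to assemble the optimality system from the ingredients already established, so that no genuinely new estimate is required beyond Theorem~\ref{thm:optW}, the duality computation preceding Remark~\ref{rem:W}, and the well-posedness theorem for the adjoint. Concretely: the state equation is nothing but admissibility of $(\bar\mu,\bar\con)$; the adjoint equation is \eqref{eq:adjoint} with $\delta_\mu J_1$ written out via \eqref{eq:deltaJ_1} and $\calK^{1,*}$ via \eqref{eq:K1_dual}; and the scalar optimality condition is \eqref{eq:optconditionPDE} once the explicit $J_2$ from Remark~\ref{rem:costfun} is inserted and one integration by parts in time is carried out.

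First I would record the state equation: since $(\bar\mu,\bar\con)$ is admissible, $E(\bar\mu,\bar\con)[\varphi]=0$ for all $\varphi\in\calA$, which is precisely the weak continuity equation $\partial_t\bar\mu_t+\nabla\cdot(\bar\mu_t\,v(\bar\mu_t,\bar\con_t))=0$ together with $\bar\mu_t|_{t=0}=\hat\mu$. For the adjoint, I would invoke Theorem~\ref{thm:optW} to obtain the identity \eqref{eq:optimality} in terms of the linearized field $\psi$, then reproduce the computation that tests \eqref{eq:linearization} against a family of vector-valued measures $(m_t)_{t\in(0,T)}$ and integrates by parts, producing the dual problem \eqref{eq:adjoint} and the reduced condition \eqref{eq:optconditionPDE}; the preceding well-posedness theorem then guarantees that \eqref{eq:adjoint} admits a solution of the form $\bar m=\bar\xi\,\bar\mu$ with $\bar\xi\in\calC([0,T]\times\R^d,\R^d)$ and terminal value $\bar m_T=0$. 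Substituting $\delta_\mu J_1(\bar\mu_t)=\sum_{i=1}^L(\partial_i j)(\langle g_1,\bar\mu_t\rangle,\dots,\langle g_L,\bar\mu_t\rangle)\nabla g_i$ and the expression \eqref{eq:K1_dual} into \eqref{eq:adjoint} yields exactly the displayed adjoint equation (with $k=L$).

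It remains to turn \eqref{eq:optconditionPDE} into the pointwise optimality condition. I would first extend \eqref{eq:optconditionPDE} from $\h\in\calC_c^\infty((0,T),\R^{dM})$ to all $\h\in H^1((0,T),\R^{dM})$ with $\h_0=0$ by density, which is legitimate since $\h\mapsto dJ_2(\bar\con)[\h]$ is continuous on $H^1$ and $\h\mapsto\int_0^T\calK^2(\bar\con_t)[\h_t]\cdot \dd\bar m_t\,\dd t$ is continuous on $\calC([0,T],\R^{dM})$ by boundedness of $\bar m$ and of $DK_2$ (assumption~\ref{A5}). Inserting $J_2(\con)=\tfrac{\lambda_3}{2}\sum_m\int_0^T|\tfrac{\dd}{\dd t}u_t^m|^2\,\dd t$ from Remark~\ref{rem:costfun} gives $dJ_2(\bar\con)[\h]=\lambda_3\sum_m\int_0^T\tfrac{\dd}{\dd t}\bar u_t^m\cdot\tfrac{\dd}{\dd t}h_t^m\,\dd t$; integrating by parts in $t$ and using $\h_0=0$ produces the boundary term $\lambda_3\sum_m\tfrac{\dd}{\dd t}\bar u_T^m\cdot h_T^m$ and the bulk term $-\lambda_3\sum_m\int_0^T\tfrac{\dd^2}{\dd t^2}\bar u_t^m\cdot h_t^m\,\dd t$. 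Since $\calK^2(\bar\con_t)[\h_t](x)=\sum_\ell(DK_2)(x-\bar u_t^\ell)\,h_t^\ell$, comparing the coefficients of $h_t^\ell$ (arbitrary on $(0,T)$) gives the Euler--Lagrange identity $\delta_{\bar\con}J_2(\bar\con)=\tfrac1{\lambda}\int_{\R^d}(\nabla K_2)(x-\bar u_t^\ell)\,\dd\bar m_t(x)$ with $\lambda=\lambda_3$, while comparing the coefficient of the free boundary value $h_T^\ell$ forces the natural boundary condition $\tfrac{\dd}{\dd t}\bar\con_t|_{t=T}=0$; the remaining constraint $\bar\con_0=\hat\con$ is the definition of $\Uad$.

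I expect the main obstacle to be twofold. The subtler analytic point is that the adjoint $\bar m$ is a genuine object and not merely a formal one: this relies on the representation $\bar m=\bar\xi\,\bar\mu$ and hence, as in the well-posedness theorem above, on $\hat\mu$ having compact support so that the Lagrangian flow $Q$ stays in a fixed compact set; without such a restriction $\bar m$ is only characterized distributionally through \eqref{eq:adjoint}. The more routine but still essential point is the density extension from $\calC_c^\infty((0,T),\R^{dM})$ to $H^1$-perturbations with $\h_0=0$, which is exactly what makes the boundary value $h_T^\ell$ available and thereby yields the natural boundary condition $\tfrac{\dd}{\dd t}\bar\con_t|_{t=T}=0$.
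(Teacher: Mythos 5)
Your proposal is correct and follows essentially the same route as the paper, which presents Theorem~\ref{eq:opt_W} explicitly as a summary of the preceding computations: the state equation from admissibility, the adjoint equation \eqref{eq:adjoint} from the duality computation combined with the well-posedness theorem (including the compact-support caveat you rightly flag), and the Euler--Lagrange identity with the natural terminal condition $\frac{d\bar\con_t}{dt}|_{t=T}=0$ obtained by inserting the explicit $J_2$ into \eqref{eq:optconditionPDE}, extending to $H^1$ test functions with $\h_0=0$, and integrating by parts in time. Your explicit attention to the density extension that makes the boundary value $h_T^\ell$ available is a point the paper passes over quickly but uses implicitly.
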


Note, that in the case of the cost functional given in Remark~\ref{rem:costfun} the optimality condition turns out to be a boundary value problem in time. In fact, we obtain as explicit representation
\begin{align*}
d_{u^\ell} J_2(\con)[h^\ell] &= \lambda\int_0^T \left\langle \frac{d}{dt} u_t^\ell, \frac{d}{dt} h_t^\ell \right\rangle_{L^2}\;dt\\
&= \lambda\,\left[  \frac{d}{dt} u_t^\ell \cdot h_t^\ell  \right]_0^T - \lambda\,\int_0^T \left\langle \frac{d^2}{dt^2} u_t^\ell, h_t^\ell\right\rangle_{H^{-1},H^1} \dd t
\end{align*}
for  $h = (h^\ell)_{\ell=1,\dots,M} \in H^1((0,T),\R^{dM})$ with $h_0 =0$. In particular, the variational lemma yields
\begin{gather*}
\delta_{u_t^\ell} J_2(u_t) = \frac{d^2}{dt^2} u_t^{\ell} = \int_{\R^d} (\nabla K_2)(x-u_t^{\ell})\dd  m_t(x) \quad \text{ in } H^{-1}((0,T),\mathbb{R}^d)\\
u_0^{\ell} = \hat \con_0^{\ell} \quad \text{ and } \quad   \frac{d}{dt} u_T^{\ell} =0 \quad \text{for all}\quad \ell=1,\dots,M \text{ and } \con \in \Uad.
\end{gather*}

\section{Relations between first-order optimality systems}\label{sec:relations} \noindent
In order to discuss the links of first-order optimality system in the space of probability measures derived in the previous section to the one of the ODE constrained problem and the optimization problem based on a classical $L^2$-approach, we shall give the respective first-order optimality systems in the interest of completeness. 
\subsection{First-order optimality conditions in the microscopic setting}\label{sec:FONCmicro}
We derive the first-order optimality conditions for the microscopic case by the classical $L^2$-approach. Again, the set of admissible controls $\Uad$ is defined as above. The state space $Y$ is the Hilbert space
\[
	Y = H^1((0,T),\mathbb{R}^{Nd})\hookrightarrow \calC([0,T],\R^{Nd}).
\]
Further, we define
\[
	Z:= L^2((0,T), \mathbb{R}^{Nd}) \times \mathbb{R}^{Nd}
\]
the space of Lagrange multipliers with the dual $Z^*=Z$. This allows us to define the state operator $e_N \colon Y \times \Uad \rightarrow Z$ for the microscopic system as
\[
	e_N(\x,\con) = \begin{pmatrix} \frac{\dd}{\dd t}{\x_t} - v^N(\x_t,\con_t) \\ \x_0 - \hat\x \end{pmatrix},
\]
and the weak form
\[
	\langle e_N(\x,\con), (\boldsymbol\xi,\boldsymbol\eta) \rangle_{Z} = \int_0^T \left(\frac{\dd}{\dd t}{\x_t} - v^N(\x_t,\con_t) \right) \cdot \boldsymbol\xi_t \dd t + (\x_0 - \hat\x) \cdot \boldsymbol\eta. 
\]
We note that due to $Y \hookrightarrow \calC([0,T], \R^{dN})$  the evaluation of $\x_0$ is justified. Let $(\boldsymbol\xi,\boldsymbol\eta)\in Z$ denote the Lagrange multipliers. Then, the Lagrangian corresponding to \eqref{OCN} with $N\in\N$ fixed reads
\[
	\mathcal{L}_\text{micro}^N(\x,\con,\boldsymbol\xi,\boldsymbol\eta) = NJ_N(\x,\con) + \langle e_N(\x,\con), (\boldsymbol\xi,\boldsymbol\eta) \rangle_{Z}.
\]

\begin{rem}
 Note that the $J_N$ is multiplied with $N$ to obtain the appropriate balance between the two terms in the Lagrangian as $N\to\infty$.
\end{rem}

As usual, the first-order necessary optimality condition is derived by solving 
\[
	\dd \mathcal{L}_\text{micro}^N(\x,\con,\boldsymbol\xi,\boldsymbol\eta)\overset{!}{=} 0.
\]
Exploiting \ref{A3}--\ref{A5} we can calculate for any $h = (h_\x,h_\con) \in Y \times \Uad$ the following G\^ateaux derivatives of the cost functional
\begin{gather*}
	\dd_\x J_N(\x,\con)[h^\x] = \int_0^T d_\x J_1^N(\x_t)[h_t^\x] \dd t, \qquad \qquad
	\dd_\con J_N(\x,\con)[h^\con] =  \int_0^T d_\con J_2(\con_t)[h_t^\con] \dd t,
\end{gather*}
and for the second part of the Lagrangian
\begin{subequations}\label{eq:derivativesODE}
	\begin{gather}
	\langle \dd_\x e_N(\x,\con)[h^\x],(\boldsymbol\xi,\boldsymbol\eta) \rangle = \int_0^T \left( \frac{\dd}{\dd t} h_t^\x - D_\x v^N(\x_t,\con_t)[h_t^\x] \right) \cdot \boldsymbol\xi_t \dd t + h_0^\x \cdot \boldsymbol\eta, \\
	\langle \dd_\con e_N(\x,\con)[h^\con],(\boldsymbol\xi,\boldsymbol\eta) \rangle = -\int_0^T D_\con v^N(\x_t,\con_t)[h_t^\con] \cdot \boldsymbol\xi_t \dd t. 
	\end{gather}
\end{subequations}
Assuming further, that $\boldsymbol\xi \in Y$, one may formally derive the strong formulation of the adjoint system. Indeed, using integration by parts we arrive at the following result.
\begin{thm}
	Let $(\bar x^N,\bar u^N)$ be an optimal pair. The optimality condition corresponding to \eqref{OCN}, with $N\in\N$ fixed, reads
	\begin{equation}\label{eq:optconditionODE}
	\int_0^T N d_\con J_2(\bar\con^N_t)[h_t^\con]  - D_\con v^N(\bar\x^N_t,\bar\con^N_t)[h_t^\con] \cdot \bar{\boldsymbol\xi}^N _t \dd t = 0 \quad \text{for all } h^\con \in \calC_c^\infty((0,T),\R^{dM}),
	\end{equation}
	where $\bar{\boldsymbol\xi}^N \in Y$ satisfies the adjoint system given by
	\begin{equation}\label{eq:adjointODE}
	\frac{\dd}{\dd t} \bar{\boldsymbol\xi}^N_t= -\nabla_\x v^N(\bar\x^N_t,\bar\con_t^N)\, \bar{\boldsymbol\xi}^N_t  + N\nabla_\x J_1^N(\bar\x_t^N)
	\end{equation}
	supplemented with the terminal condition $\bar\xi^N_T = 0.$ 
\end{thm}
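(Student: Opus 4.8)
The plan is to run the classical Lagrange-multiplier / reduced-gradient argument for ODE-constrained optimization on \eqref{OCN}. First I would establish that the control-to-state map $S_N\colon \Uad\to Y$, $\con\mapsto\x(\con)$, is well defined and continuously G\^ateaux-differentiable: well-posedness is the global existence statement for \eqref{eq:stateODE}, while differentiability follows from the implicit function theorem once one checks that $v^N$ is $\calC^1$ on $\R^{dN}\times\R^{dM}$ — which, for the interaction structure \eqref{eq:structureV}, is immediate from \ref{A5}. Its derivative $\delta\x := DS_N(\bar\con^N)[h^\con]$ is the unique solution in $Y$ of the linearized forward system
\[
 \frac{\dd}{\dd t}\delta\x_t = D_\x v^N(\bar\x^N_t,\bar\con^N_t)[\delta\x_t] + D_\con v^N(\bar\x^N_t,\bar\con^N_t)[h^\con_t],\qquad \delta\x_0 = 0.
\]
The unique solvability of this linear ODE for an arbitrary source and initial value is exactly the constraint qualification (surjectivity of $\dd_\x e_N(\bar\x^N,\bar\con^N)$ restricted to $\{h^\x\in Y : h^\x_0=0\}$) guaranteeing the existence of a multiplier $(\bar{\boldsymbol\xi}^N,\bar{\boldsymbol\eta}^N)\in Z^\ast=Z$ with $\dd\mathcal{L}_\text{micro}^N(\bar\x^N,\bar\con^N,\bar{\boldsymbol\xi}^N,\bar{\boldsymbol\eta}^N)=0$.

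Next I would read off the adjoint equation from stationarity in the state variable. Setting $\dd_\x\mathcal{L}_\text{micro}^N[h^\x]=0$ for all $h^\x\in Y$ and inserting the formulas \eqref{eq:derivativesODE} produces the weak form of a linear ODE for $\bar{\boldsymbol\xi}^N$ with source $N\nabla_\x J_1^N(\bar\x^N_\cdot)$. Since $\bar\x^N\in\calC([0,T],\R^{dN})$ is bounded and $J_1^N\in\calC^1$ by \ref{A4}, this source lies in $L^2((0,T),\R^{dN})$, whence $\bar{\boldsymbol\xi}^N\in H^1=Y$ by a bootstrap, so an integration by parts in $t$ turns the weak form into the strong backward system \eqref{eq:adjointODE}; testing with $h^\x_0=0$ and $h^\x_T\ne 0$ forces the terminal condition $\bar\xi^N_T=0$, while the residual $h^\x_0$-terms identify $\bar{\boldsymbol\eta}^N = \bar{\boldsymbol\xi}^N_0$.

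The heart of the argument is the duality identity linking $\delta\x$ and $\bar{\boldsymbol\xi}^N$: differentiating $t\mapsto \bar{\boldsymbol\xi}^N_t\cdot\delta\x_t$, substituting \eqref{eq:adjointODE} and the linearized forward system, using that $\nabla_\x v^N$ is the transpose of the Jacobian $D_\x v^N$ (so the cross terms cancel), and integrating over $[0,T]$ with the boundary terms killed by $\delta\x_0=0$ and $\bar\xi^N_T=0$, one obtains
\[
 N\!\int_0^T\!\nabla_\x J_1^N(\bar\x^N_t)\cdot\delta\x_t\,\dd t = -\int_0^T \bar{\boldsymbol\xi}^N_t\cdot D_\con v^N(\bar\x^N_t,\bar\con^N_t)[h^\con_t]\,\dd t.
\]
On the other hand, stationarity in the control, $\dd_\con\mathcal{L}_\text{micro}^N[h^\con]=0$, is equivalent to $d\hat J_N(\bar\con^N)[h^\con]=0$ for the reduced functional $\hat J_N(\con):=NJ_N(S_N\con,\con)$, which holds for every $h^\con$ in the tangent space $\{h\in H^1 : h_0=0\}$ of the affine set $\Uad$, in particular for $h^\con\in\calC_c^\infty((0,T),\R^{dM})$; by the chain rule this reads $N\int_0^T d_\x J_1^N(\bar\x^N_t)[\delta\x_t]\,\dd t + N\int_0^T d_\con J_2(\bar\con^N_t)[h^\con_t]\,\dd t=0$. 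Substituting the duality identity yields exactly \eqref{eq:optconditionODE}, the fact that the admissible directions form a linear subspace being what gives the stated equality rather than an inequality.

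I expect the main obstacle to be not the formal manipulations but the rigorous underpinning of the first step: verifying that $v^N$ is genuinely $\calC^1$ (clear from \eqref{eq:structureV} since $K_1,K_2\in\calC_b^2$ by \ref{A5}), that the linearized flow is uniquely solvable in $Y$ so that $S_N$ is differentiable with values in $Y$, that the constraint qualification indeed yields a multiplier in $Z$, and that this multiplier bootstraps to the $H^1$-regularity $\bar{\boldsymbol\xi}^N\in Y$ asserted in the theorem. Once these are in place, the integration-by-parts duality and the passage from the Lagrangian stationarity to \eqref{eq:adjointODE}–\eqref{eq:optconditionODE} are routine; a small but essential point is that the $t=0$ boundary term in the state-duality is annihilated precisely by $\delta\x_0=0$ and the $t=T$ one by $\bar\xi^N_T=0$.
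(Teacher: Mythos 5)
Your proposal is correct and follows essentially the same route as the paper: the classical Lagrangian/adjoint calculus for the ODE-constrained problem, with the adjoint system \eqref{eq:adjointODE} obtained from stationarity in the state (via integration by parts of the derivatives \eqref{eq:derivativesODE}) and the condition \eqref{eq:optconditionODE} from stationarity in the control. The only difference is one of rigor rather than substance --- the paper explicitly carries out this step formally (``assuming further that $\boldsymbol\xi\in Y$ \dots one may formally derive''), whereas you additionally justify the differentiability of the control-to-state map, the constraint qualification, the regularity bootstrap for $\bar{\boldsymbol\xi}^N$, and the duality identity $\tfrac{d}{dt}(\bar{\boldsymbol\xi}^N_t\cdot\delta\x_t)$, all of which are consistent with the paper's statement.
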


Similar to the previous case, we obtain for the cost functional given in Remark~\ref{rem:costfun} the boundary value problem
\begin{gather*}
\frac{d^2}{dt^2} u_t^{\ell} = \frac{1}{\lambda N} \sum_{i=1}^N \nabla K_2(x_t^{N,i} - u_t^{\ell})\,\xi_t^{N,i} \quad \text{ in } H^{-1}((0,T),\mathbb{R}^d)\\
u_0^{\ell} = \hat \con_0^{\ell} \quad \text{ and } \quad   \frac{d}{dt} u_T^{\ell} =0 \quad \text{for all}\quad \ell=1,\dots,M \text{ and } \con \in \Uad.
\end{gather*}
Further, for the special structure of the interaction forces defined in (\ref{eq:structureV}) and $J$ given by \ref{A3} we obtain for the adjoint equation
\begin{align}\label{eq:adjointODE_explicit}
\begin{aligned}
\frac{\dd}{\dd t} \xi^i_t &= \frac{1}{N}\,\sum_{j=1}^N\nabla K_1(x_t^i-x_t^j)\xi_t^i - \frac{1}{N}\,\sum_{j=1}^N\nabla K_1(x_t^j-x_t^i)\xi_t^j + \sum_{\ell=1}^M \nabla K_2(x_t^i-u_t^\ell)\,\xi_t^i\\
&\hspace*{6em}+ \sum_{l=1}^L \partial_l j \bigl( \langle g_1,\mu_t^N\rangle, \ldots, \langle g_L,\mu_t^N\rangle \bigr)\nabla g_l(x_t^i), \quad i =1,\ldots,N,
\end{aligned}
\end{align}
with terminal condition $\xi^i_T =0$.

\begin{rem}\label{rem:adjointODE_N}
	Using a similar idea as in the proof in the Appendix (Gronwall inequality), it is not difficult to see that under assumption \ref{A5}, $\boldsymbol{\xi}^N$ satisfying \eqref{eq:adjointODE_explicit} enjoys the uniform bound
	\[
	 \sup_{t\in[0,T]} \frac{1}{N} \sum_{i=1}^N |\xi_t^{N,i}|^2 =: C_\xi <\infty,
	\]
	where $C_\xi>0$ is independent of $N\in \N$, and depends only on $DK_i$, $Dj$ and $D g$.
\end{rem}

\begin{rem}\label{rem:N-m_t}
 Defining the vector-valued measure $m_t^N:= (1/N)\sum_{i=1}^N \xi_t^i\delta_{x_t^i}$, we have by construction that $m_t$ satisfies
 \begin{align*}
	\frac{d}{dt} \int_{\R^d}\nabla\varphi \cdot d m_t^N &= -\int_{\R^d} \nabla\varphi \cdot \nabla v(\mu_t^N,\con_t^N)\dd m_t^N- \int_{\R^d}\nabla\varphi\cdot\int_{\R^d} \nabla K_1(y-\cdot)\dd m_t^N(y)\dd \mu_t^N\\
	&+\sum_{l=1}^L \int_{\R^d} \partial_l j \bigl( \langle g_1,\mu_t^N\rangle, \ldots, \langle g_L,\mu_t^N\rangle \bigr)\nabla\varphi\cdot \nabla g_l\dd \mu_t^N +\int_{\R^d}\nabla^2\varphi\, v(\mu_t^N,\con_t^N)\cdot d m_t^N 
 \end{align*}
for all $\varphi\in\calC_c^\infty(\R^d)$. In other words, $m_t^N$ is a distributional solution of the equation
\begin{align}\label{eq:N-m_t}
\begin{aligned}
	\partial_t m_t + \nabla\cdot\bigl(v(\mu_t^N,\con_t^N)\otimes m_t\bigr) &= -\nabla v(\mu_t^N,\con_t^N)\,m_t - \mu_t^N\int_{\R^d} \nabla K_1(y-\cdot)\dd m_t(y) \\
	&\hspace*{8em}+\mu_t^N\sum_{l=1}^L \partial_l j \bigl( \langle g_1,\mu_t^N\rangle, \ldots, \langle g_L,\mu_t^N\rangle \bigr) \nabla g_l.
\end{aligned}
\end{align}
\end{rem}

We emphasize that \eqref{eq:N-m_t} coincides with the adjoint equation in the mean-field setting \eqref{eq:adjoint}.

\subsection{First-order optimality conditions in the mean-field setting: $L^2$-approach}\label{sec:FONCmf}
To be able to work in the classical $L^2$-setting, we will need additional assumptions to obtain Lebesgue integrable solutions:

\begin{enumerate}[label=(A\arabic*)]
	\setcounter{enumi}{5}
	\item \label{A6} There exists a compact $\Omega_0 \subset \R^d$ such that the initial condition satisfies $\text{supp}(\hat\mu) \in \Omega_0.$ 
	\item \label{A7} The initial measure $\hat\mu$ has a Lebesgue density $\hat f \in L^2(\Omega_0)$.
\end{enumerate}

In particular, \ref{A5}-\ref{A7}  ensure the boundedness of the support of $\mu_t$ for all times $t \in [0,T]$. Hence, we can  fix a bounded domain $\Omega \subset \R^d$ with smooth boundary containing the support of $\mu_t$ for all times $t \in [0,T].$
In this section we strongly use that $\mu$ is absolutely continuous w.r.t.\ the Lebesgue measure and denote its density by $f_t=d\mu_t/dx$ with initial condition $f_0= \hat f$. Then, we define the state space of the PDE optimization problem as
\begin{equation*}
\mathcal Y = \Big\{ f \in L^2((0,T), H^1(\Omega)) \;\colon \partial_t f \in L^2((0,T), H^{-1}(\Omega)) \Big\}.
\end{equation*}
Let $\mathcal X = L^2((0,T), H^{1}(\Omega))$ and $\mathcal Z = \mathcal X \times  L^2(\Omega)$ be the space of adjoint states with dual $\mathcal Z^*.$
The control space $\Uad$ was already defined in \eqref{eq:Uad}. For the derivation of the adjoints we consider here only  the special case given by (\ref{eq:structureV}) and \ref{A4}.
We define the mapping $e_\infty \colon \mathcal{Y}\times \Uad \rightarrow \mathcal{Z}^*$ by
\begin{align*}
\langle e(\mu,\con),(q,\eta) \rangle_{\mathcal{Z}^*,\mathcal{Z}} &= \int_0^T \langle \partial_t f_t,  q_t \rangle_{H^{-1}, H^1} +  \int_\Omega  \nabla \cdot \big( v(f_t,\con_t) f_t \big) q_t \dd x \dd t 
- \int_\Omega (f_0 - \hat f) \eta \dd x
\end{align*}
with adjont state $(q,\eta) \in \mathcal{Z}$. The Lagrangian corresponding to \eqref{OCMF} reads
\begin{equation*}
\mathcal{L}_\text{macro}(\mu,\con,q,\eta) = J(\mu,\con) + \langle e(\mu,\con),(q,\eta) \rangle_{\mathcal{Z}^*,\mathcal{Z}}.
\end{equation*}
Analogously to the microscopic case, we derive the adjoint system and the optimality condition by calculating the derivatives of $\mathcal{L}_\text{meso}$ w.r.t.~the state variable and the control. The standard $L^2$-calculus yields
\begin{equation*}
\dd_f J(\mu,\con)[h^f] = \int_0^T d_f J_1(\mu_t) [h_t^f] \dd t , \qquad 
\dd_\con J(\mu,\con)[h^\con] = \int_0^T d_\con J_2(\con_t)[h_t^\con] \dd t
\end{equation*}
for the cost functional and
\begin{align*}
\langle \dd_f e(\mu,\con)[h^f] , (q,\eta) \rangle &=  \int_0^T \langle  \partial_t h_t^f , q_t \rangle_{H^{-1}, H^1} + \left\langle \int_\Omega K_1(y-x) \cdot \nabla q_t(y) f_t(y)\dd y, h_t \right\rangle \dd t \\
& \qquad -\int_0^T \langle v(f_t,\con_t) \cdot \nabla q , h_t \rangle dt - \langle h_0, \eta \rangle,\\
\langle \dd_\con e(\mu,\con)[h^\con] , (q,\eta) \rangle &= -\int_0^T \int_\Omega  D_\con v(f_t,\con_t)[h_t^\con]\cdot \nabla q_t\,f_t \dd x \dd t 
\end{align*}
for the state operator.
Assuming additionally $q \in \mathcal Y$, we may integrate by parts to obtain a strong formulation of the adjoint system. This yields the following optimality system.
\begin{thm}\label{prop:optimalityL2}
	Let $(f,u)$ be an optimal pair. The optimality condition corresponding to \eqref{OCMF} reads
	$$ \int_0^T d_uJ_2(\con_t)[h_t^\con] - \int_\Omega D_\con v(f_t,\con_t)[h_t^\con]\cdot\nabla q_t\,f_t \dd x  \dd t = 0 \quad \text{for all } h^\con \in C_0^\infty(\R^{dM}),$$
	where $q \in \mathcal{Y}$ satisfies the adjoint PDE given by
	\begin{equation}\label{eq:adjointL2}
	\partial_t q_t - \int_\Omega K_1(y-x) \cdot \nabla q_t(y)f_t(y)\dd y + v(f_t,\con_t) \cdot \nabla q_t = \sum_{i=1}^L \partial_i j(\langle g_1, \mu \rangle, \dots, \langle g_L, \mu \rangle)\, g_i 
	\end{equation}
	supplemented with the terminal condition $g_T = 0$.
\end{thm}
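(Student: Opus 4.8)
\textbf{Proof plan for Theorem~\ref{prop:optimalityL2}.}
The plan is to follow the classical Lagrangian/KKT strategy already set up for the microscopic case in Section~\ref{sec:FONCmicro}, now transferred to the PDE-constrained problem with state density $f$. The starting point is the stationarity condition $\dd \mathcal{L}_\text{macro}(\mu,\con,q,\eta)=0$ at an optimal pair $(f,u)$, which splits into the state equation (recovering \eqref{eq:PDE} in its weak form with density $f$), the adjoint equation (obtained from $\dd_f \mathcal{L}_\text{macro}=0$), and the optimality condition (from $\dd_\con \mathcal{L}_\text{macro}=0$). The derivatives $\dd_f e(\mu,\con)[h^f]$, $\dd_\con e(\mu,\con)[h^\con]$, $\dd_f J$ and $\dd_\con J$ have already been computed in the excerpt, so the work is to collect these expressions, impose that the sum vanishes for all admissible test directions $h^f$ and $h^\con$ separately, and then perform an integration by parts in time to pass from the weak form (with $\partial_t h^f$ paired against $q$) to the strong form (with $\partial_t q$ paired against $h^f$).

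Concretely, I would first treat the variation in $f$: setting the $f$-component of $\dd\mathcal{L}_\text{macro}$ to zero gives, for all $h^f$ vanishing at $t=0$ (to match the fixed initial condition $f_0=\hat f$),
\[
\int_0^T \langle \partial_t h_t^f, q_t\rangle_{H^{-1},H^1}\dd t - \int_0^T \langle v(f_t,\con_t)\cdot\nabla q_t, h_t^f\rangle\dd t + \int_0^T\Bigl\langle \int_\Omega K_1(y-x)\cdot\nabla q_t(y) f_t(y)\dd y, h_t^f\Bigr\rangle\dd t + \int_0^T d_f J_1(\mu_t)[h_t^f]\dd t = 0.
\]
Assuming the extra regularity $q\in\mathcal Y$ as stated, I integrate the first term by parts in time; the boundary term at $t=T$ forces the terminal condition $q_T=0$, while the boundary term at $t=0$ vanishes because $h_0^f=0$. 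Using the cylindrical form of $J_1$ from \ref{A3}, the term $d_f J_1(\mu_t)[h_t^f]$ equals $\int_\Omega \sum_{i=1}^L \partial_i j(\langle g_1,\mu_t\rangle,\dots,\langle g_L,\mu_t\rangle) g_i(x)\, h_t^f(x)\dd x$. Collecting everything and invoking the fundamental lemma of the calculus of variations (the test functions $h^f$ being dense) yields exactly \eqref{eq:adjointL2} together with $q_T=0$. Then the variation in $\con$: setting the $\con$-component to zero gives the stated optimality condition directly, since $d_\con J(\mu,\con)[h^\con]=\int_0^T d_\con J_2(\con_t)[h_t^\con]\dd t$ and $\langle \dd_\con e(\mu,\con)[h^\con],(q,\eta)\rangle = -\int_0^T\int_\Omega D_\con v(f_t,\con_t)[h_t^\con]\cdot\nabla q_t\, f_t\dd x\dd t$, with no integration by parts needed.

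The main obstacle is not the bookkeeping but the justification of the regularity hypothesis $q\in\mathcal Y$ that is used to pass to the strong form: a priori the adjoint is only a linear functional defined by the weak identity above, and one must argue that under \ref{A5}--\ref{A7} (which confine the support of $f_t$ to a fixed bounded $\Omega$ and give $\hat f\in L^2$) the solution $q$ of the transported backward equation \eqref{eq:adjointL2} indeed lies in $L^2((0,T),H^1(\Omega))$ with $\partial_t q\in L^2((0,T),H^{-1}(\Omega))$ — this is a linear transport-type equation with a bounded nonlocal zeroth-order term, so existence and the required regularity follow by the method of characteristics along the flow $Q$ (using that $v(\mu,\con)\in\calC_b^1$ by Remark~\ref{rem:A5}) combined with a Banach fixed-point argument analogous to the one used for \eqref{eq:CharPhi}, plus an energy estimate to upgrade to the $\mathcal Y$-topology. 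A secondary point to be careful about is matching function spaces for the control variations: the optimality condition is stated for $h^\con\in C_0^\infty(\R^{dM})$ but $\Uad$ constrains $\con_0=\hat\con$, so admissible variations must vanish at $t=0$; as in Theorem~\ref{thm:optW} one first obtains an inequality and then uses the sign-change $h^\con\mapsto -h^\con$ to get equality, and the boundary term $\frac{d}{dt}u_T^\ell=0$ arises from the integration by parts in $J_2$ exactly as in the computation following Theorem~\ref{eq:opt_W}.
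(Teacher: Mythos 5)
Your proposal is correct and follows essentially the same route as the paper: the paper does not give a separate proof of Theorem~\ref{prop:optimalityL2} but obtains it exactly as you describe, by setting the $f$- and $\con$-variations of $\mathcal{L}_\text{macro}$ to zero using the derivative formulas displayed just before the statement, integrating by parts in time under the assumed regularity $q\in\mathcal Y$, and reading off the terminal condition from the boundary term. Your additional remarks on justifying $q\in\mathcal Y$ via characteristics and on the admissible control variations go beyond what the paper records (it simply assumes $q\in\mathcal Y$), but they are consistent with its treatment.
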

The adjoint equation \eqref{eq:adjointL2} derived via the $L^2$-approach clearly resembles \eqref{eq:compL2adjoint}.
\begin{rem}\label{rem:relmircomesoadjoint}
	As before, in the case \eqref{eq:structureV} the optimality conditions can be given explicity as
	\begin{gather*}
	\frac{d^2}{dt^2} \bar u^{\ell} = \frac{1}{\lambda} \sum_{\ell=1}^M\int_\Omega \nabla K_2(x-\bar u_t^\ell) \nabla q_t(x) f_t(x) \dd x,\\ \bar u_0^{\ell} = 0 = \frac{d}{dt} \bar u_T^{\ell}  \quad \text{for all } \ell=1,\dots,M.
	\end{gather*}
	A comparison with the optimality condition on the micro indicates a relation between $\nabla q$ and $\xi$ which will be further discussed  in the following.
\end{rem}

\subsection{Relations between the approaches}
In this section we discuss the relation between the adjoint derived w.r.t.\ the 2-Wasserstein distance and the gradient flow equation corresponding to the Hamiltonian approach (cf.\ \cite{FornasierSolombrino}). In order to define the probability measure containing forward and backward information we first recall the flow formulation of the state system
\begin{align}\label{eq:state_flow}
\frac{\dd}{\dd t} Q_t(x) &= v(Q_t\#\mu_0,\con_t)\circ Q_t(x), \qquad Q_0(x) =x,\quad \mu_0 = \text{law}(x).
\end{align}
Further, we introduce the adjoint flow $A_t$ corresponding to $\xi_t$, defined by $A_t = \xi_t\circ Q_t$. Its evolution equation is given by
\begin{align}\label{eq:adjoint_flow}
\begin{aligned}
	\frac{\dd}{\dd t}A_t(x) &= -\bigl(\nabla v(Q_t\#\mu_0,\con_t) \circ Q_t(x) \bigr) A_t(x) - \int_{\R^d} (\nabla K_1)(Q_t(y) - Q_t(x))A_t(y)\dd  \mu_0(y) \\ &\hspace{25em} - \delta_\mu J_1(Q_t\# \mu_0)
\end{aligned}
\end{align}
with terminal condition $A_T(x)= 0$.
\begin{rem}
	We would like to point out that  \eqref{eq:adjoint_flow} can also be derived directly from the state flow with the help of a Lagrangian-approach w.r.t.\ the $L^2$-scalar product. A change of coordinates from the Lagrangian to the Eulerian perspective leads to \eqref{eq:xiW}.
\end{rem}
Due to the strong dependence of the adjoint flow on the forward flow, one may understand \eqref{eq:state_flow} and \eqref{eq:adjoint_flow} as a coupled system of equations.
Let us consider the measure $\nu\in \calC([0,T], \calP_1(\R^d\times\R^d))$ defined by the push-forward of $\mu_0$ along the map $S_t(x)=(Q_t(x),A_t(x))$ for all $x\in\R^d$ and $t\in[0,T]$:
\[
 \iint_{\R^d\times\R^d} \varphi(x,r)\dd \nu_t(x,r) = \int_{\R^d} (\varphi\circ S_t)(x)\dd \mu_0(x)\qquad \text{for all\, $\varphi\in \calC_b(\R^d\times\R^d)$}.
\]
Notice that since
\[
 \iint_{\R^d\times\R^d} \varphi(x)\dd \nu_t(x,r) = \int_{\R^d} (\varphi\circ Q_t)(x)\dd \mu_0(x) = \int_{\R^d} \varphi(x)\dd \mu_t(x),
\]
the first marginal of $\nu_t$ corresponds to $\mu_t$. In particular,
\begin{gather*}
v(\mu_t,\con_t) = v(\nu_t,\con_t),\qquad \nabla v(\mu_t,\con_t) = \nabla v(\nu_t,\con_t),\qquad \delta_\mu J_1(\mu_t) = \delta_\mu J_1(\nu_t), \\
\int_{\R^d} (\nabla K_1)(Q_t(y) - x)A_t(y)\dd  \mu_0(y) = \iint_{\R^d\times\R^d} (\nabla K_1)(y - x)\eta\dd  \nu_t(y,\eta).
\end{gather*}
Furthermore, 
\[
\iint_{\R^d\times\R^d} \varphi(r)\dd \nu_T(x,r) = \int_{\R^d} (\varphi\circ A_T)(x)\dd \mu_0(x) = \varphi(0),
\]
i.e.\ $\nu_T(\R^d \times B) = \delta_0(B)$ for all $B\in\mathcal{B}(\R^d)$.

From the definition of $\nu$, it is not difficult to see that $\nu$ satisfies
\begin{equation}\label{eq:evolutionNu}
\partial_t \nu_t + \nabla_x \cdot \Big( \nabla_\xi \mathcal H(\nu_t,\con_t) \nu_t \Big) - \nabla_\xi \cdot \Big(\nabla_x \mathcal H(\nu_t,\con_t) \nu_t \Big) = 0,
\end{equation}
with mixed initial and terminal data given by
\begin{equation*}
 \nu_0 (B \times \R^d) = \mu_0(B), \qquad \nu_T(\R^d \times B) = \delta_0(B) \quad \text{for any\, $B\in \mathcal{B}(\R^d)$},
\end{equation*}
where the {\em Hamiltonian} (cf.~\cite{FornasierPP}) corresponding to \eqref{OCMF} is given by
\begin{align}\label{eq:hamiltonian}
\begin{aligned}
	\mathcal H(\nu,\con)(x,\xi) &= v(\nu,\con)(x)\cdot \xi + \iint_{\R^d\times\R^d} K_1(y-x) \cdot \eta\dd  \nu(y,\eta) \\
	&\hspace*{15em}-\sum_{i=1}^L (\partial_i j)(\langle g_1 ,  \nu \rangle, \dots, \langle g_L,  \nu \rangle) g_i(x).
\end{aligned}
\end{align}

On the other hand, \eqref{eq:evolutionNu} can also be derived from a mean-field Ansatz \cite{FornasierSolombrino}. Indeed, starting from the system of forward and adjoint ODEs, leads to the empirical measure $\nu^N$ defined as
\begin{equation}\label{eq:empiricalNu}
\nu_t^N(\dd x\dd \xi) = \frac{1}{N}\sum_{i=1}^N \delta_{(x_t^i,\xi_t^i)}(\dd x\dd\xi) 
\end{equation} which satisfies \eqref{eq:evolutionNu}. More details can be found in, e.g., \cite{diss,sheep1}. 

We conclude this section with a discussion of the relation of $\nu$ and the vector-valued adjoint variable $m$ defined by \eqref{eq:adjoint}. More precisely, we show that $m$ satisfying \eqref{eq:adjoint} can be characterized as first moment of $\nu$ with respect to $\xi$. We use the notation $\omega_t(\dd x) := \int_{\R^d} \xi \nu_t(\dd x,\dd\xi)$. Since by construction,
\begin{align*}
 \int_{\R^d} \varphi(x)\dd |\omega_t|(x) &\le \iint_{\R^d\times\R^d} \varphi(x) |\xi| d\nu_t(\dd x\dd\xi) = \int_{\R^d} (\varphi \circ Q_t)(x) |A_t(x)| \dd\mu_0(dx) \\
 &\le \|\varphi\|_{\sup} \|\xi_t\|_{\sup} \le \|\varphi\|_{\sup} \sup\nolimits_{t\in[0,T]}\|\xi_t\|_{\sup}\qquad\text{for all\, $\varphi\in\calC_b(\R^d)$},
\end{align*}
the measure $\omega_t$ is well-defined, and satisfies \eqref{eq:adjoint} with the terminal condition 

The above discussion yields the following result:
\begin{prop}
The adjoint corresponding to \eqref{OCMF} derived in the Wasserstein space $\calP_2(\R^d)$ solves \eqref{eq:xiW} and can be characterized as the first moment w.r.t.~$\xi$ of the probability measure $\nu$ corresponding to the Hamiltonian flow \eqref{eq:evolutionNu} of \eqref{OCMF} with Hamiltonian given by \eqref{eq:hamiltonian}.  
\end{prop}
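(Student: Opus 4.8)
The plan is to verify the two claims in the proposition separately: first that the Wasserstein adjoint $\bar\xi$ solves \eqref{eq:xiW}, and second that the first $\xi$-moment $\omega_t$ of the Hamiltonian-flow measure $\nu$ coincides with $m_t = \bar\xi_t\bar\mu_t$, so that $\bar\xi$ is recovered from $\nu$ by the stated disintegration-and-moment procedure. The first claim is already essentially in hand: Remark~\ref{rem:W} shows that whenever the vector-valued adjoint $\bar m$ of \eqref{eq:adjoint} is absolutely continuous with respect to $\bar\mu$, writing $\bar m_t = \bar\xi_t\bar\mu_t$ reduces \eqref{eq:adjoint} to \eqref{eq:xiW}, and the well-posedness theorem (the one producing the representation \eqref{eq:CharPhi}) guarantees that such a $\bar\xi\in\calC([0,T]\times\R^d,\R^d)$ exists and that $m=\bar\xi\bar\mu$ is indeed a distributional solution of \eqref{eq:adjoint}. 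So for the first half I would simply invoke those two statements.

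For the second claim, I would start from the measure $\nu\in\calC([0,T],\calP_1(\R^d\times\R^d))$ defined as the push-forward of $\mu_0$ along $S_t(x)=(Q_t(x),A_t(x))$, and recall from the preceding discussion in Section~\ref{sec:relations} that its first marginal is $\mu_t$ and that it solves the Hamiltonian system \eqref{eq:evolutionNu} with the mixed data $\nu_0(B\times\R^d)=\mu_0(B)$, $\nu_T(\R^d\times B)=\delta_0(B)$. Then I would define $\omega_t(\dd x) := \int_{\R^d}\xi\,\nu_t(\dd x,\dd\xi)$, which by the bound already displayed (using $\|\xi_t\|_{\sup}\le\sup_t\|\xi_t\|_{\sup}<\infty$ from the well-posedness theorem) is a well-defined $\R^d$-valued measure with $|\omega_t|\ll\mu_t$. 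The key computation is to take a test function of the special form $\varphi(x,\xi)=\nabla\phi(x)\cdot\xi$ with $\phi\in\calC_c^\infty(\R^d)$ in the weak formulation of \eqref{eq:evolutionNu}: because $\mathcal H$ is affine in $\xi$, the terms $\nabla_\xi\mathcal H$ and $\nabla_x\mathcal H$ produce exactly the forward-transport term $v(\mu_t,\con_t)\otimes\omega_t$ and the reaction terms $-\nabla v(\mu_t,\con_t)\omega_t$, $-\mu_t\int(\nabla K_1)(y-\cdot)\dd\omega_t(y)$, and $\mu_t\delta_\mu J_1(\mu_t)$, using the identities $v(\mu_t,\con_t)=v(\nu_t,\con_t)$ etc.\ noted earlier and the fact that integrating the $\eta$-variable of $\nu_t$ against a function of $x$ alone gives $\omega_t$. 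Matching this against the weak form of \eqref{eq:adjoint} shows $\omega$ solves \eqref{eq:adjoint}; the terminal condition $\nu_T(\R^d\times B)=\delta_0(B)$ forces $\omega_T=0$, and uniqueness of the adjoint equation (from the same well-posedness theorem) gives $\omega_t=\bar m_t=\bar\xi_t\bar\mu_t$ for all $t$. Disintegrating, $\bar\xi_t = \dd\omega_t/\dd\mu_t$ is precisely the first $\xi$-moment of $\nu_t$ conditioned on its first marginal.

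The main obstacle I anticipate is purely technical: justifying the manipulation of the weak form of \eqref{eq:evolutionNu} with the unbounded test function $\varphi(x,\xi)=\nabla\phi(x)\cdot\xi$, which is not in $\calC_b(\R^d\times\R^d)$. This is handled by the uniform bound $\sup_t\|A_t\|_{\sup}<\infty$ on supports (from assumption \ref{A6} together with the well-posedness theorem, which confine both $Q_t$ and $A_t$ to a fixed compact set when $\hat\mu$ has compact support), so $\nu_t$ is in fact supported in a fixed compact subset of $\R^d\times\R^d$ and the moment $\omega_t$ and all the integrals above are finite; one approximates $\varphi$ by compactly supported test functions and passes to the limit using dominated convergence. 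A secondary, more bookkeeping-level point is checking that the affine-in-$\xi$ structure of $\mathcal H$ really makes the $\nabla_x\mathcal H$-term collapse correctly — in particular that the cylindrical $J_1$-contribution $-\sum_i(\partial_i j)(\langle g_1,\nu\rangle,\dots)g_i(x)$ differentiates in $x$ to give back $-\delta_\mu J_1(\mu_t)$ with the sign that matches the right-hand side $\bar\mu_t\delta_\mu J_1(\bar\mu_t)$ of \eqref{eq:adjoint} after the integration by parts in $x$. Both are routine given the regularity already established, so the proof is essentially an assembly of Remark~\ref{rem:W}, the adjoint well-posedness theorem, and the structural identities for $\nu$ collected just before the statement.
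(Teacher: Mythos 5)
Your first half matches the paper exactly: the identification $\bar m_t=\bar\xi_t\bar\mu_t$ reducing \eqref{eq:adjoint} to \eqref{eq:xiW} is Remark~\ref{rem:W}, and existence/uniqueness of $\bar\xi$ with the representation \eqref{eq:CharPhi} is the well-posedness theorem. For the second half, however, you take a genuinely different and more roundabout route than the paper, and the route has a weak link. The paper does not test the weak form of \eqref{eq:evolutionNu} at all; it obtains the moment identification in one line from the very construction of $\nu_t=S_{t\#}\mu_0$ with $S_t=(Q_t,A_t)$ and $A_t=\bar\xi_t\circ Q_t$: for any $\varphi\in\calC_b(\R^d)$,
\begin{equation*}
\int_{\R^d}\varphi(x)\dd\omega_t(x)=\iint_{\R^d\times\R^d}\varphi(x)\,\xi\dd\nu_t(x,\xi)=\int_{\R^d}\varphi(Q_t(x))\,\bar\xi_t(Q_t(x))\dd\mu_0(x)=\int_{\R^d}\varphi\,\bar\xi_t\dd\bar\mu_t,
\end{equation*}
i.e.\ $\omega_t=\bar\xi_t\bar\mu_t=\bar m_t$ identically, after which the fact that $\omega$ solves \eqref{eq:adjoint} is inherited from the well-posedness theorem with no further computation.

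The gap in your version is the final appeal to ``uniqueness of the adjoint equation (from the same well-posedness theorem).'' That theorem establishes uniqueness of the fixed point $\xi\in\calC([0,T]\times\R^d,\R^d)$ of the characteristic integral equation, and only asserts that $m=\xi\mu$ is \emph{a} distributional solution of \eqref{eq:adjoint}; it does not prove uniqueness of vector-measure-valued distributional solutions of \eqref{eq:adjoint} with terminal datum $0$, which is what you need to conclude $\omega=\bar m$ from the PDE you derive for $\omega$. To close this you would either have to prove such a uniqueness statement (e.g.\ by first showing $\omega_t$ has a continuous density w.r.t.\ $\bar\mu_t$ and reducing to the $\xi$-level equation --- but exhibiting that density is exactly the direct change-of-variables computation above, which makes the whole PDE detour redundant), or replace the detour by the direct identification. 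Your derivation of the equation for $\omega$ by testing \eqref{eq:evolutionNu} with $\varphi(x,\xi)=\nabla\phi(x)\cdot\xi$ is itself sound (it mirrors Remark~\ref{rem:N-m_t} in the discrete case, and your handling of the unbounded test function via compact supports is fine under \ref{A6}), so it is not wasted work --- it just cannot serve as the identification step without the missing uniqueness result.
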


The findings of this section are summarized in Figure~\ref{fig:flowChart}. On the ODE level the adjoints can be computed using the $L^2$-approach. Passing to the mean-field limit with the empirical measure \eqref{eq:empiricalNu} yields an evolution equation for a probability measure on the state and adjoint space \eqref{eq:evolutionNu}. The first $\xi$-moment of $\nu$ satisfies the same equation as the adjoint equation derived in the space of probability measures equipped with the 2-Wasserstein distance, i.e.\ \eqref{eq:adjoint}. The evolution of point masses following the characteristics of the mean-field adjoint equation equals the solution of the adjoint ODE with states initialized at the corresponding points. Moreover, we formally obtain a relation of the $L^2$-adjoint \eqref{eq:adjointL2} and \eqref{eq:xiW}, whenever $K_1$ and $K_2$ are gradients of potential fields. Indeed, taking the gradient of the evolution equation of $g$ yields \eqref{eq:xiW} for $\nabla g = \xi$ (see Remarks~\ref{rem:adjoint_potential} and \ref{rem:relmircomesoadjoint}).	

\begin{figure}[ht!]
 \includegraphics[width=0.85\textwidth]{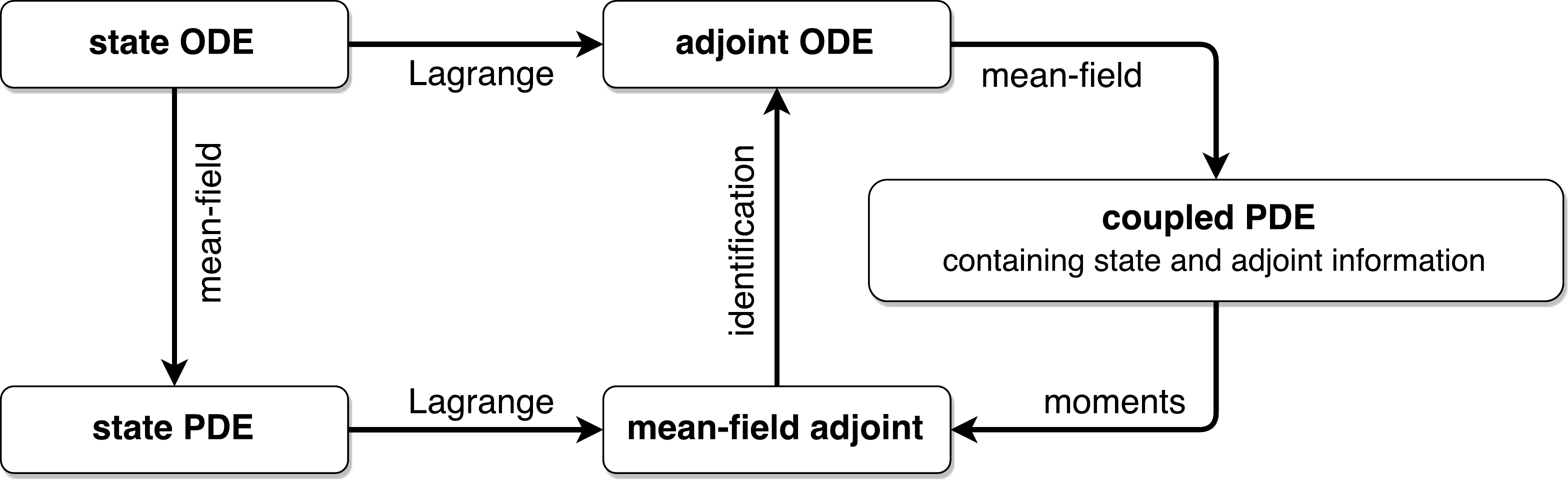}
 \caption{\small Flow chart showing the relations between the different adjoint approaches discussed in this section.}
 \label{fig:flowChart}
\end{figure}

\begin{rem}\label{justificationNumerics}
As the adjoint equation obtained using the calculus in the space of probability measures is vector-valued, it may be infeasible for numerical simulations for higher space dimensions. The link between the vector-valued adjoint and the $L^2$-adjoint discussed in this section can be seen as justification to use the $L^2$-adjoint for numerics. Indeed, in \cite{sheep1} this procedure leads to very convincing results.
\end{rem}

\section{Convergence Rate}\label{sec:ConvergenceRate}
\noindent
In this section we investigate the convergence of the microscopic optimal controls to the optimal control of the mean-field problem as $N \rightarrow \infty$. Our strategy for the proof is to use flows to pull the information back to the initial data. For simplicity we assume that $J_2(u)$ and $v$ have the structures given in Remark~\ref{rem:costfun} and \eqref{mfV}, respectively, in more detail:
$$ J_2(\con) = \frac{\lambda}{2}\left\| \frac{d\con}{dt} \right\|_{L^2((0,T),\R^{dM})}^2, \qquad v(\mu,\con) = -K_1 \ast \mu - \sum_{\ell=1}^M  K_2(x - u^\ell).$$
 For the initial data we assume convergence as $N \to \infty$. This can be realized by drawing samples from the initial measure $ \hat \mu$ for the particles (see Remark~\ref{rem:convergenceInitalmeasures}). 

\medskip

To summarize, the goal of this section is to prove
\begin{thm}\label{thm:convergenceRate}
Let the assumptions \ref{A1}-\ref{A6} hold and $J_2(\con)$ as above. Further, let $(\bar\x^N,\bar\con^N)$ and $(\bar{\mu},\bar\con)$ be optimal pairs for \eqref{OCN} and \eqref{OCMF} with initial data $\hat\x^N$, $\hat\mu$, respectively. Moreover, let the adjoint velocity for the pair $(\bar\mu,\bar\con)$ satisfy $\bar\xi\in \calC([0,T],\Lip_b(\R^d))$. Then there exists a constant $\gamma>0$ depending only on $T$, $K_1$, $K_2$, $J_1$ and Lipschitz bounds on $\bar{\xi}$, such that for $\lambda>\gamma$ it holds
\[
 \norm{\con^N - \bar\con}_{L^2((0,T), \mathbb{R}^{dM})}^2 \le \frac{\gamma}{\lambda-\gamma} W_2^2(\mu_{\hat x^N}^N,\hat\mu),
\]
where $\mu_{\hat x^N}^N$ denotes the empirical measures corresponding to the initial configurations $\hat\x^N$.
\end{thm}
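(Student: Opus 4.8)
The plan is to exploit that, by Remark~\ref{generalization}, the particle problem \eqref{OCN} is nothing but \eqref{OCMF} with the empirical initial datum $\hat\mu^N:=\mu^N_{\hat\x^N}$, so that the claim is a quantitative stability estimate for the optimal control with respect to the $2$-Wasserstein distance of the initial measures. I attach superscripts $N$ and $\infty$ to the objects of the two problems: optimal controls $\bar\con^N$, $\bar\con$; states $\bar\mu^N_t=\tfrac1N\sum_i\delta_{\bar x^{N,i}_t}$, $\bar\mu_t$; adjoint velocities $\bar\xi^N$ (so $\bar\xi^{N,i}_t$ is the particle adjoint and $\bar m^N_t=\tfrac1N\sum_i\bar\xi^{N,i}_t\delta_{\bar x^{N,i}_t}$) and $\bar\xi$ (with $\bar m_t=\bar\xi_t\bar\mu_t$). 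Since the particles are sampled from $\hat\mu$, assumptions \ref{A5}--\ref{A6} and the uniform boundedness of $v$ in \eqref{mfV} place $\mathrm{supp}(\hat\mu^N)$, $\mathrm{supp}(\hat\mu)$, all trajectories $\bar x^{N,i}_t$, and all of $\mathrm{supp}(\bar\mu_t)$ in a fixed compact $\Omega$ for $t\in[0,T]$, uniformly in $N$. Throughout, $C$ denotes a constant depending only on $T$, the $\calC^2_b$-norms of $K_1,K_2$, the data of $J_1$, the uniform adjoint bound $C_\xi$ from Remark~\ref{rem:adjointODE_N}, and $\mathrm{Lip}(\bar\xi)$ — never on $N$.

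First I would test the two optimality systems against the control difference. By Theorem~\ref{eq:opt_W} and its particle counterpart (Section~\ref{sec:FONCmicro}, rewritten via Remark~\ref{rem:N-m_t}), for the quadratic cost at hand $\bar\con$ and $\bar\con^N$ solve, for each $\ell$, the boundary value problems $\lambda\,\ddot{\bar u}^{\,\ell}_t=\int(\nabla K_2)(x-\bar u^{\,\ell}_t)\,d\bar m_t(x)$ and $\lambda\,\ddot{\bar u}^{\,N,\ell}_t=\int(\nabla K_2)(x-\bar u^{\,N,\ell}_t)\,d\bar m^N_t(x)$, both with $\bar u^{\,\ell}_0=\hat\con^\ell$ and $\dot{\bar u}^{\,\ell}_T=0$. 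Setting $w:=\bar\con^N-\bar\con$, subtracting, testing with $w$ and integrating by parts (using $w_0=0$, $\dot w_T=0$) yields
\[
 \lambda\,\|\dot w\|_{L^2((0,T),\R^{dM})}^2=\int_0^T\sum_{\ell=1}^M\Bigl(\int(\nabla K_2)(x-\bar u^{\,N,\ell}_t)\,d\bar m^N_t-\int(\nabla K_2)(x-\bar u^{\,\ell}_t)\,d\bar m_t\Bigr)\cdot w^\ell_t\,dt.
\]
By the Poincaré inequality $\|w\|_{L^2}\le T\|\dot w\|_{L^2}$ (as $w_0=0$), it suffices to bound the right-hand side by $C\bigl(\|\dot w\|_{L^2}^2+W_2^2(\hat\mu^N,\hat\mu)\bigr)$ and absorb for $\lambda$ large. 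Splitting off the $\nabla K_2$-increment by $\|D^2K_2\|_\infty|\bar u^{\,N,\ell}_t-\bar u^{\,\ell}_t|$ (using $|\bar m^N_t|(\R^d)\le\sqrt{C_\xi}$), the task reduces to estimating $\int\phi\,d(\bar m^N_t-\bar m_t)$ for the bounded Lipschitz functions $\phi=(\nabla K_2)(\cdot-\bar u^{\,\ell}_t)$, and then a Young inequality on the product with $w^\ell_t$.

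Next I would compare $\bar m^N_t$ with $\bar m_t$ through the intermediate measure $\bar\xi_t\bar\mu^N_t=\tfrac1N\sum_i\bar\xi_t(\bar x^{N,i}_t)\delta_{\bar x^{N,i}_t}$. On one side, $\bigl|\int\phi\,d(\bar\xi_t\bar\mu^N_t-\bar m_t)\bigr|=\bigl|\int\phi\,\bar\xi_t\,d(\bar\mu^N_t-\bar\mu_t)\bigr|\le\bigl(\|\phi\|_\infty\mathrm{Lip}(\bar\xi_t)+\mathrm{Lip}(\phi)\|\bar\xi_t\|_\infty\bigr)W_2(\bar\mu^N_t,\bar\mu_t)$, and $W_2^2(\bar\mu^N_t,\bar\mu_t)\le C\bigl(W_2^2(\hat\mu^N,\hat\mu)+\|w\|_{L^2}^2\bigr)$ by Lemma~\ref{lem:forwardDobrushin}; the hypothesis $\bar\xi\in\calC([0,T],\Lip_b(\R^d))$ is essential here. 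On the other side, $\bigl|\int\phi\,d(\bar m^N_t-\bar\xi_t\bar\mu^N_t)\bigr|\le\|\phi\|_\infty\sqrt{D_t}$, where $D_t:=\tfrac1N\sum_i|\bar\xi^{N,i}_t-\bar\xi_t(\bar x^{N,i}_t)|^2$. Thus everything reduces to showing $\sup_{t\in[0,T]}D_t\le C\bigl(W_2^2(\hat\mu^N,\hat\mu)+\|w\|_{L^2}^2\bigr)$.

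The final — and hardest — step is a backward Gronwall estimate for $D_t$. Since $\bar\xi^{N,i}_T=0=\bar\xi_T(\bar x^{N,i}_T)$ we have $D_T=0$. Differentiating $t\mapsto\bar\xi_t(\bar x^{N,i}_t)$ along the \emph{particle} characteristic and using the Eulerian adjoint equation \eqref{eq:xiW} for $(\bar\mu,\bar\con)$ produces a flow-mismatch term $D\bar\xi_t(\bar x^{N,i}_t)\bigl[v(\bar\mu^N_t,\bar\con^N_t)-v(\bar\mu_t,\bar\con_t)\bigr](\bar x^{N,i}_t)$, bounded by $\mathrm{Lip}(\bar\xi_t)C_v\bigl(W_2(\bar\mu^N_t,\bar\mu_t)+|\bar\con^N_t-\bar\con_t|\bigr)$ via \ref{A1}--\ref{A2} — a second use of the Lipschitz hypothesis. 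Comparing the remaining terms of \eqref{eq:xiW} at $\bar x^{N,i}_t$ with the particle adjoint equation for $\bar\xi^{N,i}_t$: the local part $\nabla v\,\bar\xi$ contributes $C D_t+C(W_2^2(\bar\mu^N_t,\bar\mu_t)+|\bar\con^N_t-\bar\con_t|^2)$ (using $|\bar m^N_t|\le\sqrt{C_\xi}$ and Lipschitz dependence of $\nabla v$ on $\mu$ in $W_2$, valid for \eqref{mfV} by \ref{A5}); the nonlocal interaction term, after inserting $\bar\xi_t(\bar x^{N,j}_t)$, splits into a part $\le\|DK_1\|_\infty\sqrt{D_t}$ and a part $\int(\nabla K_1)(y-\bar x^{N,i}_t)\bar\xi_t(y)\,d(\bar\mu_t-\bar\mu^N_t)(y)\le C W_2(\bar\mu^N_t,\bar\mu_t)$ (again via $\bar\xi_t\in\Lip_b$); and the source $\delta_\mu J_1(\bar\mu^N_t)(\bar x^{N,i}_t)-\delta_\mu J_1(\bar\mu_t)(\bar x^{N,i}_t)$ is $\le C W_2(\bar\mu^N_t,\bar\mu_t)$ by \eqref{eq:deltaJ_1} and \ref{A3} (note both are evaluated at the \emph{same} point, so no extra smoothness of $\nabla g_\ell$ is needed). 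Averaging over $i$, using Cauchy--Schwarz against $\tfrac1N\sum_i|\bar\xi^{N,i}_t|^2\le C_\xi$ where needed, and inserting Lemma~\ref{lem:forwardDobrushin}, one obtains $-\tfrac{d}{dt}D_t\le C D_t+C\bigl(W_2^2(\hat\mu^N,\hat\mu)+\|w\|_{L^2}^2\bigr)$; a backward Gronwall argument from $D_T=0$ then gives the required bound on $\sup_t D_t$. Feeding this and the previous step back into the identity of Step~1, with Young's inequality and $\|w\|_{L^2}\le T\|\dot w\|_{L^2}$, yields $\lambda\|\dot w\|_{L^2}^2\le\gamma\bigl(\|\dot w\|_{L^2}^2+W_2^2(\hat\mu^N,\hat\mu)\bigr)$ for some $\gamma>0$ of the stated dependence; for $\lambda>\gamma$ one absorbs the first term and the Poincaré inequality converts this into the asserted bound on $\|\bar\con^N-\bar\con\|_{L^2}$ (after enlarging $\gamma$). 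I expect the bookkeeping in this last step to be the main obstacle: every error term must close against $D_t$, $\|\dot w\|_{L^2}^2$, or $W_2^2(\hat\mu^N,\hat\mu)$ with a constant independent of $N$, which is exactly what the uniform adjoint bound of Remark~\ref{rem:adjointODE_N}, the uniformly bounded supports from \ref{A5}--\ref{A6}, and the Lipschitz control on $\bar\xi$ are for.
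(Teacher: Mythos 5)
Your proposal is correct and follows essentially the same route as the paper: difference the two optimality conditions tested against $\bar\con^N-\bar\con$, split the adjoint-measure discrepancy through the intermediate measure $\bar\xi_t\bar\mu_t^N$ (the paper's terms (I) and (II)), control the particle-vs-Eulerian adjoint mismatch by a backward Gronwall argument (the paper's Lemma~\ref{lem:xi_stability} and Remark~\ref{rem:xi_stability_uniform}), invoke the forward stability of Lemma~\ref{lem:forwardDobrushin}, and close with Poincar\'e and absorption for $\lambda>\gamma$. The only notable difference is cosmetic: you run the backward Gronwall in differential form along the particle characteristics, making the flow-mismatch term $D\bar\xi_t\,[v(\bar\mu_t^N,\bar\con_t^N)-v(\bar\mu_t,\bar\con_t)]$ explicit, whereas the paper works with the integral (Duhamel) representations of both adjoints; the resulting estimate is the same.
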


\begin{rem}\label{growthRate_SSC}
Note, that we cannot expect that the solutions of the respective optimal control problems are unique. Hence, we need to ensure that our problem is convex enough, i.e.\ $\lambda$ is large enough. Essentially, we require here some kind of second-order sufficient condition or, equivalently, a quadratic growth condition near to the optimal state (see also \cite{casas2015}).
\end{rem}

\begin{rem}\label{rem:convergenceInitalmeasures}
Theorem~\ref{thm:convergenceRate} show that the convergence rate strongly depends on the convergence of the initial measures $W_2(\mu_{\hat x^N}^N,\hat\mu)\to 0$. Since $\mu$ is assumed to have compact support, we obtain a convergence of order $\sqrt{N}$ (cf.~\cite{convergenceInitial}), if one chooses $\hat\x^N$ as random variables with distribution $\hat\mu$.  
\end{rem}
\begin{rem}
	The proof of the convergence rate can be obtained as well in a slightly different setting, i.e.\ without fixing the initial positions of the controls. Indeed, for $$J_2(\con) := \frac{\lambda}{2 }\int_0^T \left| \frac{d\con_t}{dt}  \right|^2 + | \con_t - \con_0|^2 \dd t\quad \text{and} \quad \Uad = H^1(0,T; \R^{dM}), $$ one obtains a similar proof without using a Poincar\'e inequality.
\end{rem}

We begin with a simple result (without proof) on $v=v(\mu,\con)$ and $J_1$.

\begin{lem}\label{lem:v_condition}
	\begin{enumerate}[label=(\roman*)]
		\item Under assumption \ref{A5}, the mapping $v: \calP_2(\R^d) \times \R^{dM} \to \calC_b^2(\R^d)$ defined by \eqref{mfV} satisfies for any $\mu,\mu'\in\calP_2(\R^d)$ and $\con,\con'\in \R^{dM}$
		\[
		\|v(\mu,\con)-v(\mu',\con')\|_{\sup} +\|Dv(\mu,\con)-Dv(\mu',\con')\|_{\sup} \le C_v\Bigl(W_2(\mu,\mu') + |\con - \con'|\Bigr),
		\]
		for some constant $C_v$, independent of $(\mu,\con)$ and $(\mu',\con')$.
		\item If in addition to \ref{A3}, $j\in\calC^2(\R^K)$ and $g_l\in\calC^2(\R^d)$, $l=1,\ldots,K$ then $\delta_\mu J_1$ defined in \eqref{eq:deltaJ_1} satisfies
		\[
		 \|\delta_\mu J_1(\mu) - \delta_\mu J_1(\mu')\|_{\sup} \le C_{J_1}W_2(\mu,\mu')
		\]
		for some constant $C_{J_1}$, depending only on $J_1$, $\mathfrak{m}_2(\mu)$ and $\mathfrak{m}(\mu')$.
	\end{enumerate}
\end{lem}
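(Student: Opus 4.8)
The plan is to treat the two estimates separately; I expect no genuine obstacle, since (i) reduces to the triangle inequality together with the coupling characterisation of $W_2$, while (ii) reduces to the mean value theorem together with moment bounds already contained in the proof of Lemma~\ref{lem:ass3}. For (i) I would use the explicit form $v(\mu,\con)(x) = -(K_1\ast\mu)(x) - \sum_{\ell=1}^M K_2(x-u^\ell)$ and split $v(\mu,\con)-v(\mu',\con') = [v(\mu,\con)-v(\mu',\con)] + [v(\mu',\con)-v(\mu',\con')]$. For the first bracket, $v(\mu,\con)(x)-v(\mu',\con)(x) = -\iint\bigl(K_1(x-y)-K_1(x-z)\bigr)\dd\pi(y,z)$ for the $W_2$-optimal coupling $\pi\in\Pi(\mu,\mu')$, which by $K_1\in\calC_b^1$ and Cauchy--Schwarz is bounded, uniformly in $x$, by $\|DK_1\|_{\sup}\iint|y-z|\dd\pi \le \|DK_1\|_{\sup}\,W_2(\mu,\mu')$. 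For the second bracket, $|K_2(x-u^\ell)-K_2(x-u'^\ell)|\le\|DK_2\|_{\sup}|u^\ell-u'^\ell|$, so summing over $\ell$ with Cauchy--Schwarz gives $\le\sqrt M\,\|DK_2\|_{\sup}|\con-\con'|$. The derivative $Dv(\mu,\con)(x) = -(DK_1\ast\mu)(x) - \sum_\ell(DK_2)(x-u^\ell)$ is estimated identically with $\|D^2K_1\|_{\sup}$ and $\|D^2K_2\|_{\sup}$ in place of the first derivatives (finite by \ref{A5}); adding the four bounds gives the claim with $C_v = \|DK_1\|_{\sup}+\|D^2K_1\|_{\sup}+\sqrt M\bigl(\|DK_2\|_{\sup}+\|D^2K_2\|_{\sup}\bigr)$.

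For (ii), set $p_\mu := (\langle g_1,\mu\rangle,\dots,\langle g_L,\mu\rangle)$ and likewise $p_{\mu'}$. Since the functions $\nabla g_\ell$ appearing in \eqref{eq:deltaJ_1} are the same for $\mu$ and $\mu'$, one has $\delta_\mu J_1(\mu)-\delta_\mu J_1(\mu') = \sum_{\ell=1}^L\bigl[(\partial_\ell j)(p_\mu)-(\partial_\ell j)(p_{\mu'})\bigr]\nabla g_\ell$, hence pointwise
\[
 |\delta_\mu J_1(\mu)(x)-\delta_\mu J_1(\mu')(x)| \le \Bigl(\textstyle\sum_{\ell=1}^L|\nabla g_\ell(x)|\Bigr)\,\Bigl(\sup_{q\in B_R}|Dj(q)|\Bigr)\,|p_\mu-p_{\mu'}|,
\]
where $B_R$ is any ball containing $p_\mu$ and $p_{\mu'}$. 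The linear-growth bound $|\nabla g_\ell|(x)\le C_g(1+|x|)$ from \ref{A3} (together with finiteness of $g_\ell(0)$) gives $|\langle g_\ell,\mu\rangle|\le C(1+\mathfrak{m}_2(\mu))$, so $R$ may be chosen depending only on $J_1$, $\mathfrak{m}_2(\mu)$ and $\mathfrak{m}_2(\mu')$; on $B_R$ the quantity $|Dj|$ is bounded because $j\in\calC^2$. Finally $|p_\mu-p_{\mu'}|\le\sqrt L\,\max_\ell|\langle g_\ell,\mu\rangle-\langle g_\ell,\mu'\rangle|$, and the estimate $|\langle g_\ell,\mu\rangle-\langle g_\ell,\mu'\rangle|\le C_g\bigl(1+\sqrt{\mathfrak{m}_2(\mu)}+\sqrt{\mathfrak{m}_2(\mu')}\bigr)W_2(\mu,\mu')$ proved inside the proof of Lemma~\ref{lem:ass3} closes the bound after taking the supremum over $x$.

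The one point deserving a remark is that $\nabla g_\ell$ need not be globally bounded, so the $\sup$-norm in (ii) is to be understood on the (compact) supports of $\mu$ and $\mu'$. This is harmless in the only place the lemma is invoked, Theorem~\ref{thm:convergenceRate}: by \ref{A6}, the boundedness of $v$ (Remark~\ref{rem:A5}) and a Gronwall estimate on the flows $Q_t$, all the relevant measures $\bar\mu_t$ and $\mu^N_{\hat x^N,t}$ are supported in one fixed compact set, uniformly in $t$ and in $N$, and on that set $\sum_{\ell}\|\nabla g_\ell\|_{\sup}$ is a finite constant that is absorbed into $C_{J_1}$. Accordingly, I would state the estimate in (ii) for measures supported in a fixed compact set, which is exactly what the later argument uses.
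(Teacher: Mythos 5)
The paper states Lemma~\ref{lem:v_condition} explicitly \emph{without proof}, so there is no authors' argument to compare yours against; judged on its own, your proof has the right structure and supplies exactly the standard estimates needed. Part (i) is clean: the splitting into a $\mu$-difference and a $\con$-difference, the coupling bound $|(K_1\ast\mu)(x)-(K_1\ast\mu')(x)|\le \|DK_1\|_{\sup}\,W_2(\mu,\mu')$ (mean value theorem plus Jensen for the optimal plan), the bound $\sqrt M\,\|DK_2\|_{\sup}|\con-\con'|$ for the control part, and the identical estimates for $Dv$ using $K_1,K_2\in\calC_b^2$ give the claim with an explicit, uniform $C_v$.

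Two remarks on part (ii). First, a slip in the displayed inequality: to bound $|(\partial_\ell j)(p_\mu)-(\partial_\ell j)(p_{\mu'})|$ by a constant times $|p_\mu-p_{\mu'}|$ you need the Lipschitz constant of $\partial_\ell j$ on the (convex) ball $B_R$, i.e.\ $\sup_{B_R}|D^2 j|$, not $\sup_{B_R}|Dj|$; this is evidently what you intend, since the hypothesis $j\in\calC^2$ is invoked precisely for this purpose, but the factor should be corrected. Second, your observation that the $\sup$-norm in (ii) cannot be taken over all of $\R^d$ is well taken and points to a genuine imprecision in the statement itself: for $g_\ell(x)=|x|^2$, which is admitted by \ref{A3} and used in Remark~\ref{rem:costfun}, the factor $\nabla g_\ell(x)=2x$ is unbounded, so for general $\mu\neq\mu'$ the left-hand side is $+\infty$ while the right-hand side is finite. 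Restricting the supremum to a fixed compact set containing the supports, as you propose, is the correct reading; it is consistent with the only place (ii) is used, namely the term $\mathrm{(III)}=\delta_\mu J_1(\mu_s^N)(x_s^{N,i})-\delta_\mu J_1(\mu_s)(x_s^{N,i})$ in Lemma~\ref{lem:xi_stability}, which is evaluated only at particle positions that remain in a fixed compact set under \ref{A6} and the uniform bound on $v$ from Remark~\ref{rem:A5}. With these two adjustments your argument is complete.
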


\begin{rem}\label{rem:v_condition_uniform}
 Note that if $\sup_{t\in[0,T]} \{\mathfrak{m}_2(\mu_t) + \mathfrak{m}_2(\mu_t')\}<\infty$, then the time dependent constants $C_{J_1}(t)$ in Lemma~\ref{lem:v_condition} are uniformly bounded in $t$, i.e.\ $\sup_{t\in[0,T]} C_{J_1}(t)<\infty$.
\end{rem}

We now proceed with a stability estimate for the adjoint {\em velocities} ${\boldsymbol\xi}^N$ and $\xi$ corresponding to the equations \eqref{eq:adjointODE_explicit} and \eqref{eq:xiW}, respectively.

\begin{rem}\label{rem:adjointODE_concise}
	Equation \eqref{eq:adjointODE_explicit} can be written in the concise integral form
	\begin{align}\label{eq:adjointODE_concise}
	 \xi_t^{N,i} = -\int_t^T \Psi^N_i(\mu_s^N,\con_s)[\bar{\boldsymbol\xi}_s^N]\dd s,\qquad \xi_T^{N,i} = 0,\qquad i=1,\ldots,N,
	\end{align}
	where $\Psi^N_i$ is given by
	\[
	 \Psi_{i}^N(\mu^N,\con^N)[{\boldsymbol\xi}^N] = -\nabla v(\mu^N,\con^N)(x^{N,i})\,\xi^{N,i} - \frac{1}{N}\,\sum_{j=1}^N\nabla K_1(x^{N,j}-x^{N,i})\xi^{N,j} + \delta_\mu J_1(\mu^N)(x^{N,i}),
	\]
	in connection to the operator $\Psi$ defined in \eqref{eq:Psi}.
\end{rem}

\begin{lem}\label{lem:xi_stability}
	Let the assumptions \ref{A1}-\ref{A6} hold. Further, let $\x^N$ and $\mu$ be solutions to \eqref{eq:stateODE} and \eqref{eq:PDE} for given controls $\con^N$, $\con$ and initial data $\hat\x^N$, $\hat\mu$, respectively. If $\boldsymbol{\xi}^N$ satisfies \eqref{eq:adjointODE_explicit} for the pair $(\x^N,\con^N)$ and $\xi\in \calC([0,T],\Lip_b(\R^d))$ satisfies \eqref{eq:xiW} for the pair $(\mu,\con)$, then there exist positive constants $a$ and $b$, independent of $N\in\N$ such that
	\[
	 \sup_{t\in[0,T]}\frac{1}{N}\sum_{i=1}^N |\xi_{t}^{N,i} - \xi_{t}(x_{t}^{N,i})| \le be^{a T} \int_0^T \Bigl(W_2(\mu_{s}^N,\mu_{s}) + |\con_{s}^N - \con_{s}|\Bigr)\dd s.
	\]
\end{lem}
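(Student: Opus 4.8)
The plan is to control the discrete quantity $D_t := \frac1N\sum_{i=1}^N |\xi_t^{N,i} - \xi_t(x_t^{N,i})|$ by a Gronwall argument run backward in time, starting from the terminal condition $D_T = 0$ (since $\xi_T^{N,i}=0$ and $\xi_T \equiv 0$). First I would write both adjoints in integral form: $\xi_t^{N,i} = -\int_t^T \Psi_i^N(\mu_s^N,\con_s^N)[\boldsymbol\xi_s^N]\dd s$ from Remark~\ref{rem:adjointODE_concise}, and, using the characteristics representation \eqref{eq:CharPhi} with $p=0$, $\xi_t(x_t^{N,i}) = -\int_t^T \Psi(\mu_s,\con_s)[\xi_s](Q_s(x_t^{N,i},t))\dd s$. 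The key observation is that $Q_s(x_t^{N,i},t) = x_s^i$ is \emph{not} quite true because $x_s^i$ follows the $N$-particle flow driven by $v(\mu_s^N,\con_s^N)$ whereas $Q$ is the mean-field flow driven by $v(\mu_s,\con_s)$; however Proposition~\ref{prop:exist_cont} and Lemma~\ref{lem:forwardDobrushin} tell us $W_2(\mu_s^N,\mu_s)$ is small, and I would instead compare $\xi_s(x_s^{N,i})$ directly against $\xi_s^{N,i}$, picking up along the way the error between evaluating $\Psi$ at $x_s^{N,i}$ versus at the mean-field-flowed point — this is where the $\int_0^T(W_2(\mu_s^N,\mu_s)+|\con_s^N-\con_s|)\dd s$ term will originate.

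Concretely, subtracting the two integral representations and taking $\frac1N\sum_i|\cdot|$ gives, for $t$ near $T$,
\[
 D_t \le \int_t^T \frac1N\sum_{i=1}^N \bigl| \Psi_i^N(\mu_s^N,\con_s^N)[\boldsymbol\xi_s^N] - \Psi(\mu_s,\con_s)[\xi_s](x_s^{N,i}) \bigr| \dd s.
\]
I would split the integrand into three contributions. First, replace $\xi_s^{N,i}$ by $\xi_s(x_s^{N,i})$ everywhere it appears in $\Psi_i^N$: using the boundedness of $\nabla v$, $\nabla K_1$ (assumption \ref{A5}) and the uniform bound $\frac1N\sum_i|\xi_s^{N,i}|^2 \le C_\xi$ from Remark~\ref{rem:adjointODE_N}, this is controlled by a constant times $\frac1N\sum_i |\xi_s^{N,i}-\xi_s(x_s^{N,i})| = D_s$, plus a term from $\frac1N\sum_j|\xi_s^{N,j}-\xi_s(x_s^{N,j})|=D_s$ in the interaction sum — both giving the $\int D_s$ term that feeds Gronwall. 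Second, the empirical interaction term $\frac1N\sum_j \nabla K_1(x_s^j - x_s^i)\xi_s(x_s^j)$ must be compared with the mean-field version $\int(\nabla K_1)(y-\cdot)\xi_s(y)\dd\mu_s(y)$ evaluated at $x_s^{N,i}$; since $\mu_s^N = \frac1N\sum_j\delta_{x_s^j}$ by Proposition~\ref{prop:exist_cont}, the empirical sum equals the $\mu_s^N$-integral, and the gap is bounded using Lipschitz continuity of $y\mapsto \nabla K_1(y-x)\xi_s(y)$ (here $\xi\in\calC([0,T],\Lip_b(\R^d))$ is crucial) times $W_2(\mu_s^N,\mu_s)$. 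Third, the remaining terms $\nabla v(\mu_s^N,\con_s^N) - \nabla v(\mu_s,\con_s)$ and $\delta_\mu J_1(\mu_s^N) - \delta_\mu J_1(\mu_s)$ are handled by Lemma~\ref{lem:v_condition}, contributing $C(W_2(\mu_s^N,\mu_s)+|\con_s^N-\con_s|)$, and they multiply the uniformly bounded factor $\frac1N\sum_i|\xi_s(x_s^{N,i})|$ which is $\le \|\xi_s\|_{\sup}$.

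Collecting these, one arrives at $D_t \le a\int_t^T D_s\,\dd s + b\int_0^T (W_2(\mu_s^N,\mu_s)+|\con_s^N-\con_s|)\dd s$ with $a,b$ depending only on $T$, the $\calC_b^2$-norms of $K_1,K_2$, the constants in $J_1$, $C_\xi$, and the Lipschitz bound on $\xi$ — in particular independent of $N$. The backward Gronwall inequality then yields $\sup_{t\in[0,T]} D_t \le be^{aT}\int_0^T(W_2(\mu_s^N,\mu_s)+|\con_s^N-\con_s|)\dd s$, which is exactly the claim. I expect the main obstacle to be the bookkeeping in the first contribution: one must be careful that replacing $\xi_s^{N,i}$ by $\xi_s(x_s^{N,i})$ inside the \emph{double} sum $\frac1N\sum_j \nabla K_1(x_s^j-x_s^i)\xi_s^{N,j}$ produces $\frac1N\sum_j|\xi_s^{N,j}-\xi_s(x_s^{N,j})| = D_s$ uniformly in $i$ (so that the outer $\frac1N\sum_i$ does not inflate it), and that every constant genuinely avoids hidden $N$-dependence — the uniform second-moment bound from \ref{A6} (compact support propagated by Remark~\ref{rem:A5}) and Remark~\ref{rem:adjointODE_N} are what make this work.
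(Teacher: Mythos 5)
Your proposal is correct and follows essentially the same route as the paper: subtract the integral forms of \eqref{eq:adjointODE_concise} and \eqref{eq:xiW} along the particle trajectories, split the integrand into the same three contributions (terms proportional to $\frac1N\sum_i|\xi_s^{N,i}-\xi_s(x_s^{N,i})|$ feeding Gronwall, the empirical-versus-mean-field interaction gap controlled by $\Lip(\xi_s)$ and $W_2(\mu_s^N,\mu_s)$, and the $\nabla v$/$\delta_\mu J_1$ differences via Lemma~\ref{lem:v_condition}), and close with a backward Gronwall inequality using the uniform bound of Remark~\ref{rem:adjointODE_N}. Your explicit treatment of the mismatch between the $N$-particle flow and the mean-field characteristics is a point the paper's proof passes over silently; the resulting extra term $D\xi_s\,[v(\mu_s^N,\con_s^N)-v(\mu_s,\con_s)]$ is of the same form as the other errors, so it is absorbed without changing the constants' structure.
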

\begin{proof}
 	Denote by $\mu^N$ the empirical measure corresponding to the particles $\x^N$. We further denote $C_{v,J_1}:= C_v+\sup\nolimits_{t\in[0,T]} C_{J_1}(t)$ with $C_v$ and $C_{J_1}(t)$ given in Lemma~\ref{lem:v_condition} for each $t\in[0,T]$. Due to Remark~\ref{rem:v_condition_uniform}, $C_{v,J_1}<\infty$. From Remark~\ref{rem:adjointODE_concise}, we see that $\boldsymbol{\xi}^N$ satisfies \eqref{eq:adjointODE_concise}, and therefore,
 	\[
 	 \xi_t^{N,i} - \xi(x_t^{N,i}) = -\int_t^T \Bigl[ \Psi^N_i(\mu_s^N,\con_s)[{\boldsymbol\xi}_s^N] - \Psi(\mu_s,\con_s)[\xi_s](x_s^{N,i})\Bigr]\dd s = - \int_t^T \text{(I)} + \text{(II)} + \text{(III)}\; ds,
 	\]
 	where
 	\begin{align*}
		\text{(I)} &= -\nabla v(\mu_s^N,\con_s^N)(x_s^{N,i})\,\xi_s^{N,i} + \nabla v(\mu_s,\con_s)(x_s^{N,i})\,\xi_s(x_s^{N,i}), \\
		\text{(II)} &= -\frac{1}{N}\,\sum_{j=1}^N\nabla K_1(x_s^{N,j}-x_s^{N,i})\xi_s^{N,j} + \int \nabla K_1(y-x_s^{N,i})\xi_s(y)\dd \mu(y),\\
		\text{(III)} &= \delta_\mu J_1(\mu_s^N)(x_s^{N,i})- \delta_\mu J_1(\mu_s)(x_s^{N,i}).
 	\end{align*}
 	From Lemma~\ref{lem:v_condition}, we easily deduce that
 	\begin{align*}
		|\text{(I)}| &\le C_{v,J_1}\Bigl(W_2(\mu_s^N,\mu_s) + |\con_s^N - \con_s|\Bigr)|\xi_s^{N,i}| + \|\nabla v(\mu_s,\con_s)\|_{\sup}|\xi_s^{N,i}  - \xi_s(x_s^{N,i})|,\\
		|\text{(III)}| &\le C_{v,J_1} W_2(\mu_s^N,\mu_s).
 	\end{align*}
 	As for (II), we have
 	\begin{align*}
		|\text{(II)}| &\le \frac{1}{N}\,\sum_{j=1}^N\bigl|\nabla K_1(x_s^{N,j}-x_s^{N,i})\bigr|\bigl|\xi_s^{N,j}- \xi_s(x_s^{N,j})\bigr| \\
		&\hspace*{6em}+ \iint \bigl|\nabla K_1(y-x_s^{N,i})\xi_s(y)-\nabla K_1(y'-x_s^{N,i})\xi_s(y')\bigr|d\pi_s(y,y') \\
		&\le \|DK_1\|_{\sup}\frac{1}{N}\,\sum_{j=1}^N |\xi_s^{N,j}- \xi_s(x_s^{N,j})| \\
		&\hspace*{6em}+ \Bigl(\|D^2K_1\|_{\sup}\|\xi_s\|_{\sup} + \|DK_1\|_{\sup}\Lip(\xi_s)\Bigr) W_2(\mu_s^N,\mu_s),
 	\end{align*}
 	where $\pi_s$ is an optimal coupling between $\mu_s^N$ and $\mu_s$.
 	
 	Defining
 	\[
 	 Y_t^N := \frac{1}{N}\sum_{i=1}^N |\xi_t^{N,i} - \xi_t(x_t^{N,i})|,
 	\]
 	we find positive constants $a,b>0$, independent of $N$ (cf.~Remark~\ref{rem:adjointODE_N}) such that
 	\begin{align*}
 	Y_t^N \le a\int_t^T Y_s^N\dd s + b \int_t^T \Bigl(W_2(\mu_s^N,\mu_s) + |\con_s^N - \con_s|\Bigr)\dd s.
 	\end{align*}
 	An application of Gronwall's inequality gives
 	\[
 	 Y_{T-t}^N \le  be^{a t} \int_0^t \Bigl(W_2(\mu_{T-s}^N,\mu_{t_s}) + |\con_{T-s}^N - \con_{T-s}|\Bigr)\dd s.
 	\]
 	Taking the supremum over $t\in[0,T]$ yields the required estimate.
\end{proof}

\begin{rem}\label{rem:xi_stability_uniform}
	Putting Lemma~\ref{lem:xi_stability} and Lemma~\ref{lem:forwardDobrushin} together, we obtain the estimate
	\[
	 \sup_{t\in[0,T]}\frac{1}{N}\sum_{i=1}^N |\xi_{t}^{N,i} - \xi_{t}(x_{t}^{N,i})|^2 \le C_T\Bigl(W_2^2(\hat\mu^N,\hat\mu) + \|\con^N-\con\|_{L^2((0,T),\R^{dM})}^2\Bigr),
	\]
	for some positive constant $C_T$, independent of $N\in\N$.
\end{rem}

\begin{proof}[Proof of Theorem~\ref{thm:convergenceRate}]
	In the following, let $(\bar\x^N,\bar\con^N)$ and $(\bar{\mu},\bar\con)$ be optimal pairs for \eqref{OCN} and \eqref{OCMF} respectively. Further, let $\bar{\boldsymbol\xi}^N$ and $\bar\xi$ be adjoint velocities of the $N$-particle trajectories and mean-field limit corresponding to \eqref{eq:adjointODE_explicit} and \eqref{eq:xiW} respectively. We also denote by $\bar{\mu}^N$ the empirical measure corresponding to the particles $\bar\x^N$.
	
	Recall the optimality conditions for $\bar\con^N$ and $\bar\con$, given by \eqref{eq:optconditionODE} and \eqref{eq:optconditionPDE}, respectively. Taking their differences and using $\h^N=\bar\con^N-\bar\con$ as a test function, we arrive at
	\[
	\frac{\lambda}{2}\left\| \frac{d}{dt}\h^N \right\|_{L^2((0,T),\R^{dM})}^2 = \bigl(dJ_2(\bar\con^N)- dJ_2(\bar\con)\bigr)[\h^N] =  \sum_{\ell=1}^M \int_0^T  \h_t^{N,\ell}\cdot\textbf{f}_t^{\,N,\ell}\, dt,
	\]
	with
	\begin{align*}
	\textbf{f}_t^{\,N,\ell} &= \frac{1}{N}\sum_{i=1}^N (\nabla K_2)(\bar x^{N,i}_t - \bar\con_t^{N,\ell})\,\bar\xi_t^{N,i} - \int_{\R^d} (\nabla K_2)(x-\bar\con_t^{\ell})\,\bar \xi_t\dd \bar \mu_t 
	= \text{(I)} + \text{(II)},
	\end{align*}
	where
	\begin{align*}
	\text{(I)} &= \frac{1}{N}\sum_{i=1}^N (\nabla K_2)(\bar x^{N,i}_t - \bar\con_t^{N,\ell})\Bigl[\bar\xi_t^{N,i} - \bar\xi_t(\bar x_t^{N,i})\Bigr],\\
	\text{(II)} &= \int_{\R^d} (\nabla K_2)(x-\bar\con_t^{N,\ell})\,\bar \xi_t\dd \bar \mu_t^N - \int_{\R^d} (\nabla K_2)(x-\bar\con_t^{\ell})\,\bar \xi_t\dd \bar \mu_t.
	\end{align*}
	For (I), we obtain from Remark~\ref{rem:xi_stability_uniform}
	\[
	 |\text{(I)}| \le \sqrt{C_T}\|DK_2\|_{\sup}\Bigl(W_2(\hat\mu,\hat\mu') + \|\h^N\|_{L^2((0,T),\R^{dM})}\Bigr).
	\]
	As for (II), we obtain in a similar manner as in the proof of Lemma~\ref{lem:xi_stability}
	\begin{align*}
	 |\text{(II)}| \le \Bigl(\|D^2K_2\|_{\sup}\|\bar\xi_t\|_{\sup}|\h_t^{N,\ell}| + \|DK_2\|_{\sup}\Lip(\bar\xi_t)\Bigr)W_2(\bar\mu_t^N,\bar{\mu}_t).
	\end{align*}
	
	Altogether, we obtain a positive constant $c_0$, depending only on $T$, $K_1$, $K_2$, $j$, $g_l$, $l=1,\ldots,L$ and Lipschitz bound on $\bar{\xi}$ such that
	\begin{align*}
		\sum_{\ell=1}^M \int_0^T  \h_t^{N,\ell}\cdot\textbf{f}_t^{\,N,\ell}\, dt \le c_0\Bigl(W_2^2(\mu_0^N,\mu_0) + \|\h^N\|_{L^2((0,T),\R^{dM})}^2\Bigr).
	\end{align*}
	On the other hand, from the Poincar\'e inequality, we have a constant $c_P>0$ such that
	\[
	 \|\h^N\|_{L^2((0,T),\R^{dM})}^2 \le c_P \left\| \frac{d}{dt}\h^N \right\|_{L^2((0,T),\R^{dM})}^2,
	\]
	and consequently gives
	\[
	 (\lambda - 2 c_P c_0) \|\h^N\|_{L^2((0,T),\R^{dM})}^2 \le 2c_P c_0 W_2^2(\mu_0^N,\mu_0).
	\]
	For $\lambda > \gamma:=2 c_P c_0$, we may simply reformulate the inequality above and conclude the proof.
\end{proof}

\begin{rem}
	Note, that the same estimates in the proof of Theorem~\ref{thm:convergenceRate} may be used to provide uniqueness of minimizers to \eqref{OCN} and \eqref{OCMF}. See also Remark \ref{growthRate_SSC}.
\end{rem}

\appendix

\section{} \label{appex:forwardDobrushin}

\begin{proof}[Proof of Lemma~\ref{lem:forwardDobrushin}]
	Under the given assumptions, the solutions $\mu$ and $\mu'$ satisfy the continuity equations
	\[
	\partial_t\mu_t + \nabla\cdot(v(\mu_t,\con_t)\mu_t)=0,\qquad \partial_t\mu_t' + \nabla\cdot(v(\mu_t',\con_t')\mu_t')=0,\qquad\text{in distribution},
	\]
	with locally Lipschitz vector fields $v(\mu_t,\con_t)$ and $v(\mu_t',\con_t')$ for every $t\in[0,T]$ satisfying
	\[
	 \int_0^T \left(\int_{\R^d} |v(\mu_t,\con_t)|^2 d\mu_t^N + \int_{\R^d}|v(\mu_t',\con_t')|^2 d\mu_t'\right)dt<\infty.
	\]
	In this case, we can take the temporal derivative of $W_2^2(\mu_t^N,\mu_t)$ to obtain
	\begin{align*}
		\frac{1}{2}\frac{d}{dt} W_2^2(\mu_t,\mu_t') &= \iint_{\R^d\times\R^d} \langle v(\mu_t,\con_t)(x)-v(\mu_t',\con_t')(y),x-y\rangle\dd \pi_t(x,y) \\
		&\le \iint_{\R^d\times\R^d} \langle v(\mu_t,\con_t)(x) - v(\mu_t,\con_t)(y),x-y\rangle\dd \pi_t(x,y) \\
		&+ \iint_{\R^d\times\R^d} |v(\mu_t,\con_t)(y) -v(\mu_t',\con_t')(y)|\,|x-y|\dd \pi_t(x,y) =: I_1 + I_2,
	\end{align*}
	where $\pi_t$ is the optimal transference plan of $\mu_t$ and $\mu_t'$ for each $t\in[0,T]$. 
	
	For the first term, we easily deduce from \ref{A1} the following estimate
	\[
		I_1 \le C_l\iint_{\R^d\times\R^d} |x-y|^2 d\pi_t(x,y),
	\]
	As for the other term, we have, due to \ref{A2},
	\begin{align*}
		I_2 &\le C_v\Bigl(W_2(\mu_t,\mu_t') + \|\con_t-\con_t'\|_2\Bigr) \iint_{\R^d\times\R^d} |x-y|\dd \pi(x,y) \\
		&\le  \frac{C_v}{2}\Bigl(3W_2^2(\mu_t,\mu_t') + 2\|\con_t-\con_t'\|_2^2\Bigr),
	\end{align*}
	where the Young inequality was used in the last inequality. Altogether, we obtain
	\[
	 \frac{d}{dt} W_2^2(\mu_t,\mu_t') \le a W_2^2(\mu_t,\mu_t') + b \|\con_t-\con_t'\|_2^2,
	\]
	with time-independent constants $a,b>0$. Applying the Gronwall inequality on the quantity $e^{-a t}W_2^2(\mu_t,\mu_t')$, we finally obtain the required estimate.
\end{proof}

\section*{Acknowledgements} \noindent
CT acknowleges the discussions with J.A. Carrillo during her stay at Imperial College London, partially supported by the German Academic Exchange Service, where she studied the Wasserstein distance and related topics that influenced this manuscript. OT acknowledges support from NWO Vidi grant 016.Vidi.189.102, ``Dynamical-Variational Transport Costs and Application to Variational Evolutions".

\bibliographystyle{plain}
\bibliography{biblio}

\begin{thebibliography}{10}

\bibitem{mfchierachy}
G.~Albi, Y.-P. Choi, M.~Fornasier, and D.~Kalise.
\newblock Mean field control hierarchy.
\newblock {\em Applied Mathematics {\&} Optimization}, 76(1):93--135, 2017.

\bibitem{Giacomo}
G.~Albi and D.~Kalise.
\newblock {(Sub)optimal} feedback control of mean field multi-population
  dynamics.
\newblock {\em IFAC-PapersOnLine}, 51(3):86 -- 91, 2018.

\bibitem{Ambrosio}
L.~Ambrosio, N.~Gigli, and G.~Savar{\'e}.
\newblock {\em Gradient flows: in metric spaces and in the space of probability
  measures}.
\newblock Springer Science \& Business Media, 2008.

\bibitem{Dante}
R.~Bailo, M.~Bongini, J.A. Carrillo, and D.~Kalise.
\newblock Optimal consensus control of the {Cucker-Smale} model.
\newblock {\em IFAC-PapersOnLine}, 51(13):1 -- 6, 2018.

\bibitem{bongini}
M.~Bongini and G.~Buttazzo.
\newblock Optimal control problems in transport dynamics.
\newblock {\em Mathematical Models and Methods in Applied Sciences},
  27(03):427--451, 2017.

\bibitem{FornasierPP}
M.~Bongini, M.~Fornasier, F.~Rossi, and F.~Solombrino.
\newblock Mean-field {Pontryagin} maximum principle.
\newblock {\em Journal of Optimization Theory and Applications}, 175(1):1--38,
  2017.

\bibitem{bonnet}
B.~Bonnet and F.~Rossi.
\newblock The {Pontryagin} maximum principle in the {Wasserstein} space.
\newblock {\em Calc. Var.}, 58(11), 2019.

\bibitem{BraunHepp}
W.~Braun and K.~Hepp.
\newblock The {Vlasov} dynamics and its fluctuations in the {$1/N$} limit of
  interacting classical particles.
\newblock {\em Communications in Mathematical Physics}, 1977.

\bibitem{sheep1}
M.~Burger, R.~Pinnau, A.~Roth, C.~Totzeck, and O.~Tse.
\newblock Controlling a self-organizing system of individuals guided by a few
  external agents - particle discription and mean-field limit.
\newblock {\em preprint}, 2016.

\bibitem{WassersteinConvergence}
J.~A. Carrillo, Y.~Choi, and M.~Hauray.
\newblock The derivation of swarming models: Mean-field limit and {Wasserstein}
  distances.
\newblock {\em CISM Volume 553, pp. 1-46}, 2014.

\bibitem{CarrilloSurvey}
J~A. Carrillo, M.~Fornasier, G.~Toscani, and F.~Vecil.
\newblock Particle, kinetic, and hydrodynamic models of swarming.
\newblock In G.~Naldi, L.~Pareschi, and G.~Toscani, editors, {\em Mathematical
  Modeling of Collective Behavior in Socio-Economic and Life Sciences}, pages
  297--336. Birkh{\"a}user Boston, 2010.

\bibitem{CBO2}
J.A. Carrillo, Y.-P. Choi, C.~Totzeck, and O.~Tse.
\newblock An analytical framework for a consensus-based global optimization
  method.
\newblock {\em accepted for publication in M3AS}, 2018.

\bibitem{casas2015}
E.~Casas and F.~Tr{\"o}ltzsch.
\newblock Second order optimality conditions and their role in pde control.
\newblock {\em Jahresbericht der Deutschen Mathematiker-Vereinigung},
  117(1):3--44, 2015.

\bibitem{Dobrushin}
R.~L. Dobrushin.
\newblock Vlasov equations.
\newblock {\em Functional Analysis and Its Applications}, 13(2):115--123, 1979.

\bibitem{emmrich}
E.~Emmrich.
\newblock {\em Gewöhnliche und Operator-Differentialgleichungen}.
\newblock Vieweg, 2004.

\bibitem{zuazua}
R.~Escobedo, A.~Iba{\~n}ez, and E.~Zuazua.
\newblock Optimal strategies for driving a mobile agent in a “guidance by
  repulsion” model.
\newblock {\em Communications in Nonlinear Science and Numerical Simulation},
  39:58--72, 2016.

\bibitem{FornasierSolombrino}
M.~Fornasier and F.~Solombrino.
\newblock Mean-field optimal control.
\newblock {\em ESAIM: Control, Optimization, and Calculus of Variations},
  20(4):1123--1152, 2014.

\bibitem{Golse}
F.~Golse.
\newblock The mean-field limit of the dynamics of large particle systems.
\newblock {\em Journees Equations aux derivees partielles}, 2003.

\bibitem{herty18}
M.~Herty and Ch. Ringhofer.
\newblock Cconsistent mean field optimality conditions for interacting agent
  systems.
\newblock {\em Communications in Mathematical Sciences}, 17(4):1095--1108,
  2019.

\bibitem{JKO}
R.~Jordan, D.~Kinderlehrer, and F.~Otto.
\newblock The variational formulation of the fokker-planck equation.
\newblock {\em SIAM J. Math. Anal.}, 29:1--17, 1998.

\bibitem{Neunzert}
H.~Neunzert.
\newblock An introduction to the nonlinear {Boltzmann-Vlasov} equation.
\newblock {\em Lecture Notes in Mathematics}, pages 60--110, 1984.

\bibitem{Otto}
F.~Otto.
\newblock The geometry of dissipative evolution equations: The porous medium
  equation.
\newblock {\em Comm. Part. Diff. Equations}, 26(1-2):101--174, 2001.

\bibitem{pham}
H.~Pham and X.~Wei.
\newblock Discrete time mckean--vlasov control problem: A dynamic programming
  approach.
\newblock {\em Applied Mathematics \& Optimization}, 74(3):487--506, 2016.

\bibitem{CBO1}
R.~Pinnau, C.~Totzeck, O.~Tse, and S.~Martin.
\newblock A consensus-based model for global optimization and its mean-field
  limit.
\newblock {\em M3AS}, 2017.

\bibitem{Borzi}
S.~Roy, M.~Annunziato, and A.~Borzi.
\newblock A {Fokker-Planck} feedback control-constrained approach for modeling
  crowd motion.
\newblock {\em J. Comp. Theo. Transp.}, 2016.

\bibitem{roy2018}
S.~Roy, M.~Annunziato, A.~Borz{\`\i}, and Ch. Klingenberg.
\newblock A {Fokker--Planck} approach to control collective motion.
\newblock {\em Computational Optimization and Applications}, 69(2):423--459,
  2018.

\bibitem{diss}
C.~Totzeck.
\newblock {\em Asymptotic Analysis of Optimal Control Problems and Global
  Optimization}.
\newblock PhD Thesis, TU Kaiserslautern, Verlag Dr. Hut, 2017.

\bibitem{convergenceInitial}
V.S. Varadarajan.
\newblock On the convergence of sample probability distributions.
\newblock {\em Sankhya: The Indian Journal of Statistics (1933-1960)},
  19(1/2):23--26, 1958.

\end{thebibliography}
\end{document}